\documentclass[10pt,reqno]{article}
\usepackage[margin=1in]{geometry}
\usepackage{dsfont, amssymb,amsmath,amscd,latexsym, amsthm, amsxtra,amsfonts}
\usepackage{lineno}
\usepackage[all]{xy}
\usepackage[active]{srcltx}
\usepackage{tikz}
\usepackage{booktabs}
\usepackage[round,authoryear]{natbib}
\usepackage{bbm}
\usepackage{enumerate}
\usepackage{mathrsfs}
\usepackage{graphicx}
\usepackage{caption}
\usepackage{subcaption}
\usepackage{comment}
\usepackage{mathtools}
\usepackage{cases}
\usepackage{tcolorbox}

\tcbuselibrary{most}

\usetikzlibrary{calc,arrows}
\usepackage{verbatim}
\usepackage{color}
\usepackage{epstopdf}

\usepackage{bm}

\usepackage[title]{appendix}
\usepackage[pdfstartview=FitH, bookmarksnumbered=true,bookmarksopen=true, colorlinks=true, pdfborder={0 0 1}, citecolor=blue, linkcolor=blue,urlcolor=blue]{hyperref}

\hypersetup{colorlinks, citecolor=blue}
\hypersetup{
    colorlinks=true,
    linkcolor=blue,        
    anchorcolor=blue,      
    citecolor=blue,      
    filecolor=blue,        
    menucolor=blue,       
    runcolor=blue,         
    urlcolor=blue,         
}
\usepackage{graphics}
\graphicspath{{figures/}}
 
\newtheorem{theorem}{Theorem}[section]
\newtheorem{lemma}[theorem]{Lemma}
\newtheorem{proposition}[theorem]{Proposition}
\newtheorem{corollary}[theorem]{Corollary}
\newtheorem{definition}[theorem]{Definition}
\newtheorem{remark}[theorem]{Remark}
\newtheorem{assumption}[theorem]{Assumption}

\newcommand{\md}{\mathrm{d}}
\newcommand{\mE}{\mathbb{E}}
\newcommand{\mR}{\mathbb{R}}

\title{Equilibrium singular dividend control under ambiguity aggregation of heterogeneous discount rates}
\author{Yue Cao\thanks{Department of Mathematical Sciences, Tsinghua University, Beijing, China. \url{ caoyue24@mails.tsinghua.edu.cn}} 
\and Guohui Guan\thanks{Center for Applied Statistics and School of Statistics, Renmin University of China, Beijing 100872, China. \url{guangh@ruc.edu.cn}}
\and Zongxia Liang\thanks{Department of Mathematical Sciences, Tsinghua University, Beijing, China. \url{ liangzongxia@tsinghua.edu.cn}}
\and Xiaodong Luo\thanks{Department of Mathematical Sciences, Tsinghua University, Beijing, China. \url{ luoxd21@mails.tsinghua.edu.cn}}}
\date{}
\begin{document}

\maketitle
\vspace{-0.3in}
\begin{abstract}\noindent
 This paper studies a singular dividend control problem for a firm with heterogeneous shareholders whose discount rates follow a given distribution. The central planner aggregates expected discounted payoffs using an ambiguity aggregation function $\phi$, which captures shareholder heterogeneity and ambiguity attitudes but also leads to time inconsistency. To address this issue, we seek a time-homogeneous equilibrium dividend law characterized by a partition of the state space into waiting and dividend-paying regions. We provide a rigorous mathematical characterization by proving a verification theorem and deriving necessary conditions for the equilibrium law. We then analyze barrier-type equilibria, showing non-existence for a class of aggregation functions that includes power-type and logarithmic aggregation functions, and establishing existence and uniqueness under linear and exponential aggregation. In the linear case, the bounded-rate equilibrium is shown to converge to the singular barrier-type equilibrium as the dividend rate bound tends to infinity.  Numerical examples illustrate the effects of discount-rate heterogeneity and ambiguity aversion on the equilibrium barrier.

\vspace{0.1in}	
\noindent\textbf{Keywords}: 
singular dividend control; time inconsistency; equilibrium dividend law; heterogeneous discount rates; ambiguity aggregation

\vspace{0.1in}
\noindent\textbf{MSC 2020}: 93E20,  49L20, 91G50

\end{abstract}
\section{Introduction}
The optimal dividend problem, first proposed by \cite{de1957impostazione}, has become a cornerstone of actuarial and financial mathematics. It aims to determine a dividend strategy that maximizes the expected cumulative discounted dividends paid before ruin. This problem has been extensively studied under various surplus dynamics, including diffusion models (see, e.g., \cite{jeanblanc1995optimization,asmussen1997controlled}), the Cram\'er--Lundberg model (see, e.g., \cite{albrecher2009optimality}), the compound Poisson processes (see, e.g., \cite{azcue2012optimal}), the Lévy processes (see, e.g., \cite{noba2018optimal}) and surplus processes with model uncertainty (see, e.g., \cite{chakraborty2023optimal}). 
The problem is also analyzed under different operational mechanisms and constraints, including regime-switching (see, e.g., \cite{jiang2012optimal}), transaction costs (see, e.g., \cite{chen2021optimal}), bankruptcy level (see, e.g., \cite{ferrari2022optimal}), reinsurance (see, e.g., \cite{guan2023dynamic}) and drawdown constraint (see, e.g., \cite{guan2026optimal}).  In most existing studies, a key assumption is that the discount rate is constant over time and across shareholders; equivalently, the discount function is exponential with a fixed parameter.

However, shareholders often have heterogeneous time preferences in practice. Empirical evidence from experimental studies shows that individual discount rates vary significantly with age, education, income, and other demographic characteristics (see, e.g.,  \cite{warner2001personal,harrison2002estimating}).  \cite{frederick2002time} show that a single constant discount rate cannot capture the heterogeneity observed in actual behavior. Furthermore, \cite{weitzman2001gamma} demonstrates that even among expert economists, views on the appropriate discount rate are widely dispersed. Taken together, these findings suggest that shareholders of a firm are unlikely to share a common discount rate. Therefore, a central planner choosing a dividend policy on their behalf should account for the aggregated effect of heterogeneous discount rates.

\citet{ebert2020weighted} provide a recent approach to aggregating heterogeneous discount rates. They introduce the class of weighted discount functions, defined by $h(t)=\int_0^{\infty}e^{-rt}\md F(r)$, where $F$ is a distribution of discount rates. This representation directly captures group heterogeneity: different group members have different exponential discount rates, and the group’s aggregate discount function is given by their weighted average. 
\citet{deng2025time} adopt a different aggregation rule in the context of stopping problems. Instead of using a linear weighted average, they employ an ambiguity aggregation function \(\phi\), motivated by the smooth ambiguity framework of \citet{klibanoff2005smooth,klibanoff2009recursive}, to aggregate expected discounted payoffs under a distribution of discount rates. Their objective takes the form
 $$
\int_0^{\infty} \phi\left(\mE \left[e^{-r\tau }g(X_\tau^x)\right]\right)\md F_{\rho}(r),
 $$
where the concave aggregation function \(\phi\) captures the central planner’s attitude toward ambiguity arising from heterogeneous discount rates, and \(g(x)\) denotes the payoff function.

 In this paper, we extend this approach to the dividend problem. We model the firm’s shareholders as a heterogeneous group whose discount rates follow a distribution $F_{\rho}(r)$. The central planner aggregates expected discounted dividends through an ambiguity aggregation function $\phi$, leading to the objective
 $$
J(t,x;D)=
\int_0^\infty
\phi\left(
\mathbb E_{t,x}\Big[\int_t^{\tau^D}e^{-r(s-t)}\,dD_s\Big]
\right)\md F_\rho(r),
 $$
where \(D\) denotes the dividend process and \(\tau^D\) is the corresponding ruin time. The aggregation of heterogeneous discount rates, however, naturally gives rise to time inconsistency. Indeed, \cite{jackson2015collective} and \cite{millner2018time} show that any non-dictatorial collective preference satisfying unanimity is generically time-inconsistent. Moreover, \cite{ebert2020weighted} demonstrate that even a linear weighted average of exponential discount functions exhibits decreasing impatience, a classical source of time inconsistency. Thus, in our setting, the central planner’s aggregation of heterogeneous shareholder discount rates already generates time inconsistency, even under the linear aggregation function \(\phi(x)=x\).

A closely related dividend maximization problem with a random discount rate was studied by \cite{zhao2014dividend}, but under a regular control framework. In their model, the random discount rate makes the discount function non-exponential and renders the problem time-inconsistent, while dividends are restricted to a bounded rate, i.e., an absolutely continuous (regular) control. In contrast, our model allows the dividend process to be a general non-decreasing singular control and thus permits lump-sum payments. Moreover, we introduce an ambiguity aggregation function \(\phi\) to aggregate expected discounted dividends across heterogeneous discount rates, thereby allowing for a nonlinear transformation of the expected payoffs associated with different discount rates. Such nonlinear transformations of expectations are another well-known source of time inconsistency. A prominent example is the mean-variance criterion, where the objective depends on both the expectation and the variance of the payoff. This type of time inconsistency has been extensively studied in portfolio selection; see, e.g., \cite{Basak_dynamic_2010}, \cite{Bjork_2014}, and \cite{dai_dynamic_2021}. Recently, \cite{cao2026equilibrium} investigated a singular dividend control problem under the mean-variance criterion.

 In this paper, we combine two distinct sources of time inconsistency: the aggregation of heterogeneous shareholders’ discount rates, which arises even under linear aggregation, and the potentially nonlinear ambiguity aggregation function \(\phi\), which further distorts the objective. In the literature on time-inconsistent singular control, two equilibrium frameworks have emerged. The first defines equilibrium through singular control laws: the state space is partitioned into a waiting region and a dividend-paying region, and the candidate equilibrium control is generated by a Skorokhod reflection problem; see,  e.g., \cite{liang2024equilibria} and \cite{liang2025stackelberg}. The second defines equilibrium directly through the increments of singular controls, following a more conventional perturbation approach; see, e.g., \cite{dai2024dynamic} and \cite{cao2026equilibrium}. We adopt the first framework because our focus is on time-homogeneous equilibrium strategies, for which it is difficult to rigorously define a Markovian dividend strategy in terms of instantaneous singular dividend increments.

The main contributions of this paper are as follows. First, we introduce a framework for singular dividend control under ambiguity aggregation to accommodate heterogeneous shareholders with different discount rates. We provide a rigorous mathematical characterization by proving a verification theorem (Theorem \ref{thm:verification}) and deriving necessary conditions (Theorem \ref{thm:nes}) for the equilibrium singular dividend law and its value function. Unlike standard singular control problems, where a single variational inequality typically suffices, our equilibrium is governed by an aggregate weighted marginal condition involving both the ambiguity aggregation function $\phi$ and the distribution of heterogeneous discount rates. In particular, the verification theorem requires the additional variational inequalities \eqref{additional assump1} and \eqref{additional assump2}. These conditions arise because the potential nonlinearity of $\phi$ makes the central planner’s marginal weights over discount rates state-dependent: an immediate lump-sum dividend $d$ may change the weight $\phi'$ attached to each discount rate $r$. Consequently, the planner’s attitude toward different discount rates shifts after a dividend payment. This complication is absent from the mean-variance singular dividend problem of \cite{cao2026equilibrium}, where the variance is unaffected by an initial lump-sum payment.

Second, we study barrier-type equilibria and characterize the equilibrium barrier through an aggregate smooth-pasting condition in Theorem \ref{thm: barrrier point}. Using the necessary conditions in Theorem \ref{thm:nes}, we identify a class of ambiguity aggregation functions, including power-type and logarithmic forms, for which no barrier-type equilibrium can exist. A related non-existence result is obtained by \cite{bodnariu2025time}; however, the mechanism is different. In their model, non-existence is driven by the state-dependent inventory cost $\frac{1}{2}x^2$ in the objective, whereas in our framework it arises from state-dependent shifts in the central planner’s marginal preferences over shareholders with heterogeneous discount rates. In contrast, we establish the existence and uniqueness of barrier-type equilibria under linear aggregation $\phi(x)=x$ and exponential aggregation. In the linear case, we further show that the regular equilibrium dividend strategy in the bounded dividend-rate setting, under a discrete mixture of exponential discount rates, converges to the singular barrier-type equilibrium dividend law as the dividend rate bound $M$ tends to infinity. For the exponential case, we establish the existence of a unique barrier-type equilibrium dividend law under the mild condition on the central planner’s ambiguity aversion $\alpha$ stated in Assumption \ref{assum:alpha_constraint}. Finally, numerical examples illustrate how discount-rate heterogeneity and ambiguity aversion affect the equilibrium barrier. The numerical results indicate that the equilibrium barrier increases with the proportion of patient shareholders but decreases as the planner becomes more ambiguity-averse. By introducing a virtual discount rate, we identify two competing forces: discount-rate heterogeneity, which encourages patience, and ambiguity aversion, which drives impatience.

The remainder of this paper is organized as follows. Section \ref{sec:Model formulation} formulates the singular dividend control problem under heterogeneous discount rates and defines the equilibrium law. Section \ref{sec:Verification theorem} establishes the verification theorem, while Section \ref{sec:Necessary Condition} derives necessary conditions for equilibrium. Section \ref{sec:Barrier strategy in two examples} analyzes barrier-type equilibria, covering non-existence results, the linear aggregation case and its comparison with \cite{zhao2014dividend}, and the exponential aggregation case. Section \ref{sec:Numerical examples} presents numerical examples.

\section{Model Formulation}\label{sec:Model formulation}

Let $(\Omega,\mathcal F,\mathbb F,\mathbb P) =(\Omega,\mathcal F,\{\mathcal F_t\}_{t\ge0},\mathbb P)$
be a filtered probability space satisfying the usual conditions and supporting a one-dimensional standard Brownian motion $W=\{W_t\}_{t\ge0}$. We consider a dividend-paying firm. Let $D_t$ denote the cumulative dividends paid up to time $t$, and refer to $D=\{D_t\}_{t\ge0}$ as a dividend strategy. Under $D$, the firm’s surplus process evolves as
\begin{equation}\label{eq:surplus}
\md X_t^D=\mu(X_t^D)\,\md t+\sigma(X_t^D)\,\md W_t-\md D_t,\qquad X_{0-}^D=x>0,
\end{equation}
where $\mu(x)>0$ and $\sigma(x)>0$ are deterministic functions on $[0,+\infty )$. The ruin time is defined\footnote{Similar to \cite{cao2026equilibrium}, $\inf\{t\ge0:X_t^D\le0\}=\inf\{t\ge0:X_t^D<0\}$ almost surely as $\sigma>0$.} by
\begin{equation}\label{eq:ruin}
\tau^D:=\inf\{t\ge0:X_t^D\le0\}.
\end{equation}
We impose the following standard assumptions on $\mu$ and $\sigma$.
\begin{assumption}\label{assump: parameter}
    The drift function $\mu(x)$ and the volatility function $\sigma(x)$ satisfy the  linear growth and Lipschitz conditions: there exists a constant $C>0$ such that
    \begin{enumerate}
        \item for any $x\in [0,\infty)$, $|\mu(x)|+|\sigma(x)|\le C(1+|x|)$;
        \item for any $x_1,x_2\in [0,\infty)$, $|\mu(x_1)-\mu(x_2)|+|\sigma(x_1)-\sigma(x_2)|\le C|x_1-x_2|$.
    \end{enumerate}
\end{assumption}

We consider an optimal dividend problem on behalf of shareholders with heterogeneous discount rates. To capture this heterogeneity, we follow \cite{deng2025time} and assume that the discount rate $\rho$ is a random variable with distribution function $F_\rho$ on $[0,\infty)$, independent of the surplus process $X$. For a given dividend strategy $D$ and discount rate $r$, we define the total discounted dividends paid from time $t$ until the ruin time $\tau^D$ by
\[
Y^D(t;r):=\int_t^{\tau^D}e^{-r(s-t)}\md D_s,
\]
\noindent
and  set $Y^D(t;r)=0$ for all $t\ge \tau^D$. Shareholders with different discount rates may disagree on the optimal dividend strategy, as each discount rate can lead to a distinct recommendation. A central planner acting on behalf of heterogeneous shareholders must therefore aggregate these divergent preferences into a single objective. Following the collective decision-making approach of \cite{deng2025time}, we introduce an ambiguity aggregation function $\phi$ to aggregate expected discounted dividend payoffs across discount rates. The function $\phi$ captures the central planner’s attitude toward the ambiguity generated by heterogeneous discount rates: a concave $\phi$ reflects ambiguity aversion, as it gives relatively more weight to less favorable discounted payoff evaluations.
 Accordingly, the resulting objective function is
\begin{equation}\label{eq:objective}
J(t,x;D):=
\int_0^\infty
\phi\left(
\mathbb E_{t,x}\Big[\int_t^{\tau^D}e^{-r(s-t)}\,dD_s\Big]
\right)\md F_\rho(r)=\int_0^\infty
\phi\left(
\mE_{t,x} \left[Y^D(t;r)\right]
\right)\md F_\rho(r),
\end{equation}
where $\mE_{t,x}$ denotes expectation conditional on $X_{t-}=x>0$. We now impose the following standard assumptions on the ambiguity aggregation function $\phi$ and the distribution $F_{\rho}$ of discount rates.
\begin{assumption}\label{phiassump}
    The function $\phi\in C^2(\mR_+)$ is increasing and concave. Moreover, we assume that the support set $\operatorname{Supp}(F_{\rho})\subset [\underline{r},\bar r],\quad 0<\underline{r}\le\bar r<\infty $.
\end{assumption}
\subsection{Admissible dividend strategy and admissible dividend law}
\begin{definition}\label{def:clasadmis}
Given an initial time $t\ge0$ and an initial surplus $X_{t-}=x>0$,
 a dividend strategy $D=\{D_s\}_{s\ge t}$ is called admissible if:
\begin{enumerate}
\item  $D$ is  $\mathbb F$-adapted,  right-continuous, nondecreasing  with $D_{t-}=0$;
\item The controlled surplus process $X^D$ in \eqref{eq:surplus} is well defined on $[t,\tau^D)$;
\item For every $s\ge t$, the company cannot pay more dividends than it owns, 
$
\Delta D_s:=D_s-D_{s^-}\le X_{s-}^D;
$
\item No dividends are paid after ruin, i.e.,
$
D_s=D_{\tau^D}
$ for $ s\ge\tau^D$;
\item \label{item inte1} For $F_\rho$-a.e. $r>0$,
$
\mathbb{E}_{t,x}\left[Y^D(t;r)\right]<\infty,\quad a.s.;
$
\item \label{item inte2} The aggregated payoff is finite, i.e.,
\[
J(t,x;D):=\int_0^\infty \phi \bigl(\mathbb{E}_{t,x}\left[Y^D(t;r)\right]\bigr)\md F_\rho(r)<\infty,\quad a.s..
\]
\end{enumerate}
We denote by $\mathcal A(t,x)$ the set of all admissible dividend strategies starting from $t$ with initial surplus $X_{t-}=x$. 
\end{definition}

In our problem, because ambiguity aggregation $\phi$  over heterogeneous discount rates   induces time-inconsistency, we  do not seek a globally optimal strategy in the classical sense, but instead adopt the intrapersonal game approach of \cite{bjork2010general} to define a time-consistent equilibrium dividend strategy. As the problem is of singular control type, the standard perturbation approach for regular controls is not directly applicable. Following \cite{liang2024equilibria}, we adopt the notion of an equilibrium singular control law. Motivated by the connection between optimal singular control and Skorokhod reflection, we describe a dividend law through a partition of the state space into a no-transaction region and a pay region. Because all model parameters are time-homogeneous, we restrict attention to time-homogeneous equilibrium strategies; that is, the dividend decision depends only on the current surplus level $x$, rather than on the calendar time $t$. Consequently, the no-transaction region $\mathcal{NT}$ and the pay region $\mathcal{P}$ are defined as subsets of $\mR_+$ rather than $\mR_+\times[0,\infty)$.
 
We now introduce the admissible singular control law, adapted from \cite{liang2024equilibria} to the present dividend problem.

\begin{definition}[Admissible Singular Dividend Law] \label{def:admissible}
Suppose that  $\mathcal{D} := (\mathcal{NT}, \mathcal{P})$ is a partition of the state space $\mathbb{R}_+$, where $\mathcal{NT}$ is an open set (the no-transaction region) and $\mathcal{P}$ is a closed set (the pay region), such that $\mathcal{NT} \cup \mathcal{P} = \mathbb{R}_+$. A singular control law $\mathcal{D}$ is called \emph{admissible} if it satisfies the following conditions:

\begin{enumerate}
    \item  For any starting time $t \ge 0$ and initial surplus $X_{t-} = x > 0$, the following Skorokhod reflection problem:
    \begin{equation} \label{eq:skorokhod}
    \begin{cases}
    dX_s^D = \mu(X_s^D) \, \md s + \sigma(X_s^D) \, \md W_s - \md D_s, & s \in[t, \tau^D), \\
    X_s^D \in \overline{\mathcal{NT}}, & s \in[t, \tau^D), \\
    D_s =  \int_t^{s\wedge \tau^D} \mathbf{1}_{\{X_u^D \in \mathcal{P}\}} \, \md D_u, & s \in [t,\infty),\\
    D_{t-} = 0
    \end{cases}
    \end{equation}
    admits a unique strong, $\mathbb{F}$-adapted, right-continuous solution pair $(X^D, D)=(X^{t,x,\mathcal{D}},D^{t,x,\mathcal{D}})$. Here, $\tau^D := \inf\{s \ge t : X_s^D \le 0\}$ is the ruin time under the dividend strategy $D$. We call $D^{t,x,\mathcal{D}}$ the dividend strategy generated by the law $\mathcal{D}$.
    
    \item  The dividend strategy $D^{t,x,\mathcal{D}}$ generated by $\mathcal{D}$ satisfies Conditions \ref{item inte1} and \ref{item inte2} in Definition \ref{def:clasadmis}.
\end{enumerate}
\end{definition}
\begin{remark}
    Here $\{D_s^t\}_{s\ge t}$ denotes the cumulative dividend starting  $t-$. In fact, we have
    $$
D_s^{0,x,\mathcal{D}}-D_{t-}^{0,x,\mathcal{D}}= D_{s}^{t,X_{t-}^{x,\mathcal{D}},\mathcal{D}}.
    $$
    Moreover, by construction, dividends are paid only when the surplus process lies in the pay region. Hence, for any admissible singular dividend law $\mathcal{D}$ and any initial state $(t,x)$, the generated dividend strategy $D^{t,x,\mathcal{D}}$ is admissible in the sense of Definition \ref{def:clasadmis}.
\end{remark}

\subsection{Equilibrium Dividend Law}
Before defining an equilibrium dividend law, we introduce the perturbed strategy associated with a candidate singular dividend law.  Let $\mathcal{D}$ be a candidate singular dividend law. Fix $(t,x)$ with $x>0$, and assume that ruin has not occurred by time $t$. For $\varepsilon>0$, let $\delta\in\mathcal{A}(t,x)$ be a nondecreasing, right-continuous process on $[t,\infty)$. We assume that there exist $\tilde{\varepsilon}>0$ sufficiently small and a constant $M>0$, independent of $\varepsilon$, such that, for all $\varepsilon\le \tilde{\varepsilon}$,
\[
\delta(t-)=0,
\quad \delta(t)\in [0,x],\quad 
\delta\left(t+\varepsilon\right)-\delta(t)\le M \varepsilon
,
\]
where $M$ is an arbitrary constant independent of $\varepsilon$.
Given the initial dividend value $D_{t-}$, define the perturbed strategy $D^\varepsilon$ induced by the candidate equilibrium strategy $\mathcal{D}$ as follows:
\begin{equation}\label{eq:perturbed}
D_s^\varepsilon
=
\begin{cases}
 D_{t-}+\displaystyle\int_{t}^{s\wedge\tau^\varepsilon} \md\delta(u),
& t\le s<(t+\varepsilon)\wedge\tau^\varepsilon,\\[1.2em]
D^{\varepsilon}_{(t+\varepsilon)-}+D_s^{t+\varepsilon,X_{\left((t+\varepsilon)-\right)\wedge\tau^{\varepsilon}},\mathcal{D}},
& s\ge (t+\varepsilon)\wedge\tau^\varepsilon,
\end{cases}
\end{equation}
where $\tau^\varepsilon:=\tau^{D^\varepsilon}$ is the ruin time under the perturbed strategy $D^\varepsilon$. 
\begin{remark}
    The definition of this perturbed strategy is inspired by \cite{cao2026equilibrium} and \cite{liang2024equilibria}. Compared with the definition in \cite{cao2026equilibrium}, we  allow the perturbation process to have jumps, while still requiring the jump size after initial time to be $O(\varepsilon)$, which is consistent with \cite{liang2024equilibria}. Therefore, the jump at the initial time is directly incorporated into $\int_{t}^{s\wedge\tau^\varepsilon} \md\delta(u)$ in our definition. Furthermore, after a perturbation of duration $\varepsilon$ (assuming that ruin has not occurred), the dividend strategy is generated by the candidate equilibrium law, which can also be regarded as a closed‑loop control.
\end{remark}
We now define the equilibrium dividend law as follows.
\begin{definition}\label{def:equilibrium}
Let $\hat{\mathcal{D}}$ be an admissible singular dividend law. We call $\hat{\mathcal{D}}$ an equilibrium dividend law if, for every $(t,x)$ with $x>0$ and ruin not yet occurred by time $t$, for every perturbation  $\delta^\varepsilon$ that satisfies the above conditions and the corresponding perturbed strategy $D^{\varepsilon}$,
\begin{equation}\label{eq:equilibrium}
\liminf_{\varepsilon\downarrow0}
\frac{
J(t,x;\hat{\mathcal{D}})-J(t,x; D^\varepsilon)
}{\varepsilon}\ge0.
\end{equation}
The corresponding equilibrium value function is
$
V(t,x)=J(t,x;\hat {\mathcal{D}}).
$
\end{definition}

\section{Verification Theorem}\label{sec:Verification theorem}

This section establishes a verification theorem for the equilibrium singular dividend law and its value function in the time-inconsistent dividend problem \eqref{eq:objective}. Let $\mathcal{L}$ denote the infinitesimal generator of the uncontrolled surplus process, defined by
\begin{equation*}
\mathcal{L} f(x) := \mu(x) f_x(x) + \frac{1}{2}\sigma^2(x) f_{xx}(x), \quad   \forall \  f \in C^2(\mathbb{R}_+).
\end{equation*}

\begin{theorem}[Verification Theorem] \label{thm:verification}
Given a  function $y(x,r): [0,\infty) \times \operatorname{Supp}(F_{\rho})  \to [0,\infty) $. 
Suppose that $y(x,r)$ satisfies the following conditions:
\begin{enumerate}
    \item  For any $r \in \operatorname{Supp}(F_{\rho})$, the function $x \rightarrow y(x, r)$ is continuously differentiable in $\mR_+$ and twice continuously differentiable except possibly at the boundary $\partial \mathcal{NT}$, where its left and right second derivatives are well defined. 
    \item  Define  the
pay region $\mathcal{P}$ and no-transaction region $\mathcal{NT}$ respectively by
\begin{equation*}
    \mathcal{P} = \left\{x>0\,\,\bigg\vert\int_0^\infty \phi'\big(y(x, r)\big) \Big[ 1 - y_x(x, r) \Big] \md F_\rho(r) = 0\right\}, \quad \mathcal{NT}= \mR^+ \backslash\mathcal{P}.
\end{equation*}
\noindent
For any $r \in \operatorname{Supp}(F_{\rho})$, the function $x\rightarrow y(x, r)$ satisfies the following system
    \begin{equation} \label{eq:y_system}
    \begin{cases}
    \mathcal{L} y(x, r) - r y(x, r) = 0, & \text{for } x \in \mathcal{NT}, \\
    y_x(x, r) = 1, & \text{for } x \in \mathcal{P}
    \end{cases}
    \end{equation}
    with boundary condition $y(0,r)=0$ and suppose that the following variational inequalities hold for any $x>0$
    \begin{eqnarray}
       && \max\left\{ \int_0^\infty \phi'(y(x,r)) \big[ 1 - y_x(x,r) \big] \md F_\rho(r), \int_0^\infty \phi'(y(x,r)) \left(-ry(x,r)+\mathcal{L}y(x,r)\right) \md F_\rho(r)\right\}=0, \label{variational inequality}\\
    && \int_0^\infty \phi'(d+y(x,r)) \big[ 1 - y_x(x,r) \big] \md F_\rho(r)\le 0 ,\quad \text{for any } d>0, \label{additional assump1} \\
    &&\int_0^\infty \phi'(d+y(x,r)) \left(-ry(x,r)+\mathcal{L}y(x,r)\right) \md F_\rho(r)\le 0,\quad \text{for any } d>0.
     \label{additional assump2}
    \end{eqnarray}
    \item $\hat{\mathcal{D}}:= (\mathcal{NT},\mathcal{P})$ is an admissible singular dividend law.
    \item\label{assump: verimartingale} For any $r\in\operatorname{Supp}(F_{\rho})$, any initial time $t$, initial surplus $x>0$ and  any $k>0$, the following stochastic integrals have zero (conditional) expectation, 
    \[\int_t^{\tau_k} e^{-r(s-t)}\sigma(X_s^{t,x,\hat{D}}) y_x(X_s^{t,x,\hat{D}},r)\,dW_s.\]
    Here we define the bounded stopping time $\tau_k:=\tau^{\hat{D}}\wedge k$ for  $k>0$, where $\hat{D}=\hat{D}^{t,x,\hat{\mathcal{D}}}$ is the corresponding dividend strategy generated by $\hat{\mathcal{D}}$.
    \item \label{assump: trans} For any $r\in\operatorname{Supp}(F_{\rho})$, initial time $t$ and $k>0$,
    \[
    \lim_{k\to\infty}
\mathbb E_{t,x}\!\left[e^{-r(k-t)}y(X_{\tau_k}^{t,x,\hat{D}},r)\right]=0.
    \]
    \item $y(x, r)$ is non-decreasing in $x$ and non-increasing in $r$ and $y(x,r)>0$ for any $x>0$ and $r \in \operatorname{Supp}(F_{\rho})$.
    \item There exist constants $C, q > 0$ independent of $r$ such that for all $z \in \mathbb{R}_+$, the following growth bound holds
    \begin{equation}\label{condition: growth}
        |y(z, r)| + |y_x(z, r)| + |(\mathcal{L}y - ry)(z, r)| \leq C(1 + z^q).
    \end{equation}  
\end{enumerate}
Then $\mathbf{\hat{\mathcal{D}}}$ is an equilibrium singular dividend law and the corresponding equilibrium value function is given by\footnote{The boundary condition $y(0,r)=0$ is imposed only at the level of the auxiliary payoff, while $\phi(y(x,r))$ are evaluated for $x>0$.}
\begin{equation*}
    V(x) = \int_0^\infty \phi\big(y(x, r)\big) \md F_\rho(r),\quad \forall x>0.
\end{equation*}
\end{theorem}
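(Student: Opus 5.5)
We prove the theorem in two parts. First we identify $y(x,r)$ with the expected discounted dividend under $\hat{\mathcal{D}}$ for each fixed $r$. Then we expand $J(t,x;D^\varepsilon)$ to first order in $\varepsilon$, using concavity of $\phi$ together with \eqref{variational inequality}--\eqref{additional assump2}.

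\emph{Step 1 (representation).} Fix $r\in\operatorname{Supp}(F_\rho)$ and $x>0$, and let $(X,\hat D)$ solve \eqref{eq:skorokhod}. We apply the Itô--Tanaka formula (legitimate since $y(\cdot,r)\in C^1$ and is piecewise $C^2$ with one-sided second derivatives at $\partial\mathcal{NT}$) to $e^{-r(s-t)}y(X_s,r)$ on $[t,\tau_k]$. Three facts give
\[
\mathbb E_{t,x}\big[e^{-r(\tau_k-t)}y(X_{\tau_k},r)\big]=y(x,r)-\mathbb E_{t,x}\Big[\int_t^{\tau_k}e^{-r(s-t)}\md \hat D_s\Big].
\]
\begin{enumerate}
\item The process stays in $\overline{\mathcal{NT}}$, and $\mathcal Ly-ry=0$ there; boundary points have Lebesgue-null occupation time, so the $\md s$ term vanishes.
\item $\hat D$ charges only $\mathcal P$, where $y_x=1$. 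A possible initial jump $x\mapsto x-\Delta\hat D_t$ along a segment of $\mathcal P$ gives $y(x)-y(x-\Delta\hat D_t)=\Delta\hat D_t$.
\item The stochastic integral has zero expectation by condition \ref{assump: verimartingale}.
\end{enumerate}
Letting $k\to\infty$ and using condition \ref{assump: trans}, $y(0,r)=0$ and monotone convergence, we obtain $y(x,r)=\mathbb E_{t,x}[Y^{\hat D}(t;r)]$. Hence $J(t,x;\hat{\mathcal D})=\int\phi(y(x,r))\md F_\rho(r)=V(x)$.

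\emph{Step 2 (perturbed payoff).} Write $d=\delta(t)\in[0,x]$ for the initial jump. For a fixed perturbation, apply the strong Markov property at $(t+\varepsilon)\wedge\tau^\varepsilon$ together with Step 1 for the continuation, and then Itô's formula on $[t,t+\varepsilon]$ from the post-jump state $x-d$:
\[
y^\varepsilon(r):=\mathbb E_{t,x}[Y^{D^\varepsilon}(t;r)]=d+y(x-d,r)+\mathbb E\Big[\int_t^{t+\varepsilon}e^{-r(s-t)}\big((\mathcal L y-ry)(X^\varepsilon_s,r)\,\md s+(1-y_x(X^\varepsilon_{s},r))\,\md\delta_s\big)\Big].
\]
This holds up to ruin-time corrections, which are $o(\varepsilon)$ when $x-d>0$. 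The growth bound \eqref{condition: growth}, moment estimates from Assumption \ref{assump: parameter}, and $\delta(t+\varepsilon)-\delta(t)\le M\varepsilon$ then give
\[
y^\varepsilon(r)=d+y(x-d,r)+\varepsilon(\mathcal Ly-ry)(x-d,r)+O(\varepsilon)\cdot(1-y_x)(x-d,r)+o(\varepsilon),
\]
uniformly in $r$ on the compact support.

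\emph{Step 3 (comparison via concavity).} Concavity of $\phi$ gives $\phi(y^\varepsilon)-\phi(y(x))\le\phi'(\cdot)(y^\varepsilon-y(x))$. We treat two cases.
\begin{itemize}
\item If $d=0$, the difference is $\varepsilon\int\phi'(y(x,r))(\mathcal Ly-ry)\,\md F_\rho+\int\phi'(y(x,r))(1-y_x)\,\md F_\rho\cdot(\delta(t+\varepsilon)-\delta(t))+o(\varepsilon)$. Both integrals are $\le0$ by \eqref{variational inequality}, which gives \eqref{eq:equilibrium}.
\item If $d>0$, we first show that the jump does not help: $d+y(x-d,r)\le y(x,r)$ in the aggregated sense. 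We differentiate $g(u)=\int\phi(u+y(x-u,r))\md F_\rho$ to get $g'(u)=\int\phi'(u+y(x-u,r))(1-y_x(x-u,r))\md F_\rho\le0$, by \eqref{additional assump1} applied at the point $x-u$ with shift $u$. This yields $g(d)\le g(0)=V(x)$. The remaining $O(\varepsilon)$ terms are then weighted by $\phi'(d+y(x-d,r))$ and controlled by \eqref{additional assump1}--\eqref{additional assump2} at the point $x-d$.
\end{itemize}
This is precisely why the shifted inequalities are needed. The case $d=x$, which is immediate ruin, reduces to $g(x)\le g(0)$ with $y(0,r)=0$.

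\emph{Main obstacle.} The delicate step is making the $O(\varepsilon)$ expansion of Step 2 rigorous and uniform in $r$. Three difficulties arise:
\begin{itemize}
\item $y$ is only piecewise $C^2$, so $(\mathcal Ly-ry)(X_s)$ may be discontinuous at $\partial\mathcal{NT}$. We plan to handle this using the one-sided derivatives and the fact that $X$ spends zero time at boundary points, so that $\frac1\varepsilon\mathbb E\int_t^{t+\varepsilon}$ converges by dominated convergence. If the starting point lies on $\partial\mathcal{NT}$, we fall back on the $\limsup$ with one-sided values, where both sides satisfy the inequalities.
\item Ruin before $t+\varepsilon$ must be shown to have probability $o(\varepsilon)$ whenever $x-d>0$.
\item Interchanging the $r$-integral with the limit requires the uniform growth bound \eqref{condition: growth} and compactness of $\operatorname{Supp}(F_\rho)$.
\end{itemize}
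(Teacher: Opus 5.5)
Your proposal is correct and follows essentially the same route as the paper. The shared steps are the It\^o representation of $y$ under $\hat{\mathcal D}$, and the splitting through the intermediate ``jump $d$, then follow $\hat{\mathcal D}$'' strategy (the paper's $D^0$), with $g'(u)\le 0$ obtained from \eqref{additional assump1}. The remaining $O(\varepsilon)$ part is handled in both by linearizing $\phi$ at $d+y(x-d,r)$ and controlling the first-order terms with \eqref{additional assump1}--\eqref{additional assump2}. The one technical difference is that you freeze $(\mathcal Ly-ry)$ and $1-y_x$ at the post-jump point $x-d$, whereas the paper moves the weight $\phi'(d+y(x-d,r))$ onto the moving state, $\phi'(d+y(X^\varepsilon_s,r))$, at $o(\varepsilon)$ cost and then uses pointwise nonpositivity of $G(d,\cdot)$ and $H(d,\cdot)$; this avoids your one-sided-limit fallback at points of $\partial\mathcal{NT}$.
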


\begin{remark}
The verification theorem highlights the time inconsistency induced by aggregating heterogeneous discount rates. Unlike classical singular control problems, where terms such as $1-V_x(x)$ or $\mathcal{L}V(x)-rV(x)$ govern the variational inequalities, the decision to pay a lump-sum dividend or to wait is determined here by the aggregate weighted marginal condition \eqref{variational inequality}. The endogenous weighting factor $\phi'\big(y(x,r)\big)$ depends on both the current state $x$ and the discount rate $r$, reflecting how the central planner’s ambiguity attitude reshapes the relative importance of different discount-rate evaluations.
\end{remark}
\begin{lemma}\label{lma:deviationestimation}
    Fix an initial state $(t,x)$ with $x>0$ and a perturbation process $\delta$. Let $X^\varepsilon$ be the controlled surplus process under the perturbed strategy $D^\varepsilon$ defined in \eqref{eq:perturbed}. Under Assumption \ref{assump: parameter}, for any $\varepsilon\le \tilde{\varepsilon}$ and $p>0$, there exists a constant $C$ independent of $\varepsilon$ such that
\begin{equation}\label{EX}
  \mE_{t,x}\left[ \sup_{s \in [t, t+\varepsilon]} |X^{\varepsilon}_s |^{2p} \right]\le C,\quad    \mE_{t,x}\left[ \sup_{s \in [t, t+\varepsilon]} |X^{\varepsilon}_s - \hat{x}|^4 \right] \le C \varepsilon^2,\quad a.s.,
\end{equation} 
 where $\hat{x}:=x-\delta_t$. Moreover, for any fixed $\kappa > 0$, we have
\begin{equation}\label{EX1}
        \mathbb{P}_{t,x}\left(\sup\limits_{s \in [t, t+\varepsilon]} |X^\varepsilon_s -\hat x| \geq \kappa\right) \le \frac{C \varepsilon^2}{\kappa^4},\quad a.s..
    \end{equation}
   
\end{lemma}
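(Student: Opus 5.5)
We would prove the three estimates by writing the perturbed surplus on $[t,t+\varepsilon]$ explicitly and using standard SDE moment bounds. On $[t,(t+\varepsilon)\wedge\tau^\varepsilon)$ the perturbed strategy equals $\delta$, so
\[
X^\varepsilon_s=\hat x+\int_t^s\mu(X^\varepsilon_u)\,\md u+\int_t^s\sigma(X^\varepsilon_u)\,\md W_u-\bigl(\delta(s)-\delta(t)\bigr),\qquad \hat x=x-\delta_t .
\]
Before ruin $X^\varepsilon_s\ge 0$, and after ruin the process is frozen (or stopped). So we work with the process stopped at $\tau^\varepsilon$; this only helps the bounds, since all integrands vanish after $\tau^\varepsilon$. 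We treat $\delta$ as adapted with $0\le\delta(s)-\delta(t)\le M\varepsilon$ for $s\le t+\varepsilon$ and $\varepsilon\le\tilde\varepsilon$. We also use $\hat x\in[0,x]$.

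\textbf{First estimate.} For $p$ with $2p\ge 2$ (smaller $p$ then follow by Jensen), we bound $|X^\varepsilon_s|^{2p}$ by a constant times
\[
\hat x^{2p}+\Bigl|\int_t^s\mu\,\md u\Bigr|^{2p}+\Bigl|\int_t^s\sigma\,\md W\Bigr|^{2p}+(M\varepsilon)^{2p}.
\]
We take the supremum over $s\le t'$, and apply H\"older to the drift term and Burkholder--Davis--Gundy to the martingale term. Together with the linear growth in Assumption \ref{assump: parameter}, this gives
\[
\phi(t'):=\mE\Bigl[\sup_{s\le t'}|X^\varepsilon_s|^{2p}\Bigr]\le C\Bigl(1+\int_t^{t'}\phi(u)\,\md u\Bigr).
\]
To make this rigorous we first localize with $\inf\{s:|X^\varepsilon_s|\ge n\}$ so that $\phi$ is finite. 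Gronwall on $[t,t+\tilde\varepsilon]$ then yields a bound $C$ depending only on $x$, $M$, $\tilde\varepsilon$ and $p$, hence uniform in $\varepsilon$. Fatou removes the localization.

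\textbf{Second estimate.} We write $X^\varepsilon_s-\hat x$ as the drift integral, plus the stochastic integral, minus $(\delta(s)-\delta(t))$. The drift integral has fourth moment at most $\varepsilon^3\int_t^{t+\varepsilon}\mE|\mu(X_u)|^4\,\md u\le C\varepsilon^4$, using linear growth and the first estimate. BDG bounds the stochastic integral by
\[
C\,\mE\Bigl(\int_t^{t+\varepsilon}\sigma^2\,\md u\Bigr)^2\le C\varepsilon\int_t^{t+\varepsilon}\mE\,\sigma^4\,\md u\le C\varepsilon^2 .
\]
The jump-free part of the perturbation contributes $(M\varepsilon)^4$. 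Summing, and using $\varepsilon\le\tilde\varepsilon$ to absorb higher powers, gives $C\varepsilon^2$.

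\textbf{Third estimate.} Markov's inequality applied to $\sup_s|X^\varepsilon_s-\hat x|^4$ together with the second estimate gives \eqref{EX1}.

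\textbf{Main obstacle.} The only delicate point is bookkeeping rather than analysis. The initial jump $\delta_t$ must be absorbed into the starting point $\hat x$, and the $O(\varepsilon)$ bound on later perturbation increments must be used so that they contribute $O(\varepsilon^4)$ and not $O(1)$. Ruin inside $[t,t+\varepsilon]$ must be handled by stopping, and the "a.s." refers to conditioning on $X_{t-}=x$; neither changes the constants.
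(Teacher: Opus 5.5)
Your proposal is correct and follows essentially the same route as the paper: localize, bound the drift by H\"older and the martingale part by BDG, use $\delta_s-\delta_t\le M\varepsilon$, close with Gronwall and Fatou, then run the same three-term fourth-moment estimate and finish with Markov's inequality. Your Jensen reduction for $2p<2$ and your stopping at ruin are harmless refinements. One caveat applies to the paper as well: your representation of $X^\varepsilon$ holds only on $[t,t+\varepsilon)$, so the supremum in the second estimate must be read with the left limit $X^\varepsilon_{(t+\varepsilon)-}$ at the endpoint. Otherwise a lump sum paid by the law $\mathcal{D}$ at time $t+\varepsilon$ would violate the $C\varepsilon^2$ bound; the moment bound still holds there, because that jump only lowers a nonnegative surplus.
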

\begin{proof}
    The proof follows from standard SDE estimates. 
    Define the stopping time $\tau_n = \inf\{t \ge 0 : |X_t^{\varepsilon}| \ge n\}$, and in the following, $C$ denotes a generic positive constant that may change from line to line but remains independent of $n$ and $\varepsilon$. For the stopped process $X^{\varepsilon}_{s \wedge \tau_n}$, using the integral form of SDE \eqref{eq:skorokhod}, we have for a fixed $p>0$ and any $\varepsilon\le \tilde{\varepsilon}$,
\begin{equation*}
    |X^{\varepsilon}_{s \wedge \tau_n}|^{2p} \le C \left( \hat{x}^{2p} + \left| \int_t^{s \wedge \tau_n} \mu(X^{\varepsilon}_r) \md r \right|^{2p} + |\hat\delta_{s \wedge \tau_n}|^{2p} + \left| \int_t^{s \wedge \tau_n} \sigma(X_r^{\varepsilon}) \md W_r \right|^{2p} \right),\quad \forall t\le s\le t+\varepsilon.
\end{equation*}
Here $\hat{\delta}_s:=\delta_s-\delta_t$. Let $f_n(s) = E_{t,x}\left[ \sup\limits_{r \in [t, s \wedge \tau_n]} |X_r^{\varepsilon}|^{2p} \right]$. Using Hölder's inequality and Assumption \ref{assump: parameter}, we obtain
    \begin{equation*}
        \mE_{t,x}\left[ \sup_{r \in [t, s \wedge \tau_n]} \left| \int_t^r \mu(X_u^{\varepsilon}) \md u \right|^{2p} \right] \le \varepsilon^{2p-1} \int_t^{s} \mE_{t,x}\left[|\mu(X^{\varepsilon}_{r \wedge \tau_n})|^{2p}\right] \md r \le C \varepsilon^{2p-1} \int_t^{s} (1 + f_n(r)) \md r.
    \end{equation*}
    The given condition that $ \delta_r-\delta_t \le M(r-t)$ for $r\le s\le t+\tilde\varepsilon$ yields
    $
        \mE_{t,x}\left[ \sup_{r \in [t, s \wedge \tau_n]} |\hat \delta_r|^{2p} \right] \le M^{2p} \varepsilon^{2p} .
    $
    Using the Burkholder-Davis-Gundy inequality(BDG) and the Cauchy-Schwarz inequality, we have 
    \begin{equation*}
        \mE_{t,x}\left[ \sup_{s \in [t, s \wedge \tau_n]} \left| \int_t^r \sigma(X_u^{\varepsilon}) \md W_u \right|^{2p} \right] \le C \mE_{t,x}\left[ \left( \int_t^{s \wedge \tau_n} \sigma^2(X_r^{\varepsilon}) \md r \right)^p \right] \le C \varepsilon^{p-1} \int_t^{s} (1 + f_n(r)) \md r.
    \end{equation*} 
    Combining the above estimates, we obtain
$
    f_n(s) \le C(1 + \hat{x}^{2p}) + C \int_t^{s} f_n(r) \md s.
$
The Gronwall's inequality yields  for all $n$, $f_n(s) \le C e^{Cs}$. As the bound $C$ is independent of $n$, applying Fatou's Lemma, it follows that, as $n \to \infty$,
\begin{equation}
    \mE_{t,x}\left[ \sup_{s \in [t, t+\varepsilon]} |X_s|^{2p} \right] \le \liminf_{n \to \infty} f_n(t+\varepsilon) \le C < \infty, \quad a.s..
\end{equation}
We emphasize that the derived constant $C$ may depend on $\varepsilon$. However, we can consider $\varepsilon\le \tilde \varepsilon$ and fix a constant $C=C(\tilde \varepsilon)$.
\noindent Let $Y_s = X_s^{\varepsilon} - \hat{x} $. Then
\begin{equation*}
    \mE_{t,x}\left[ \sup_{s \in [t, t+\varepsilon]} |Y_s|^4 \right] \le C \left( \mE_{t,x}\left[ \sup_{s \in [t, t+\varepsilon]} \left| \int_t^{s} \mu(X_r^{\varepsilon}) \md r \right|^4 \right] + M^4\varepsilon^4 + \mE_{t,x}\left[ \sup_{s \in [t, t+\varepsilon]} \left| \int_t^{s} \sigma(X_r^{\varepsilon}) \md W_r \right|^4 \right] \right).
\end{equation*}
The expectations of the drift term and the dividend term are both bounded by $C \varepsilon^4$. For the diffusion term, applying the BDG inequality again yields
  {\small   \begin{equation*}
        \mE_{t,x}\left[ \sup_{s \in [t, t+\varepsilon]} \left| \int_t^{s} \sigma(X_r^{\varepsilon}) \md W_r \right|^4 \right] \le C \varepsilon \int_t^{t+\varepsilon} \mE_{t,x}\left[|\sigma(X_s^{\varepsilon})|^4\right] \md s \le C \varepsilon \int_t^{t+\varepsilon} (1 + \mE_{t,x}\left[|X_s^{\varepsilon}|^4\right]) \md s \le C \varepsilon^2.
    \end{equation*} }
Then $
    \mE_{t,x}\left[ \sup_{s \in [t, t+\varepsilon]} |X^{\varepsilon}_s - \hat{x}|^4 \right] \le C \varepsilon^2,\quad a.s.$. Thus,
\begin{equation*}
     \mathbb{P}_{t,x}\left[ \sup\limits_{s \in [t, t+\varepsilon]} |X_s^{\varepsilon} - \hat{x}| > \kappa \right] \le \frac{\mE_{t,x}\left[ \sup\limits_{s \in [t, t+\varepsilon]} |X_s^{\varepsilon} - \hat{x}|^4 \right]}{\kappa^4} \le \frac{C \varepsilon^2}{\kappa^4}.
\end{equation*} 
\end{proof}
\begin{proof}[{\bf Proof of Theorem \ref{thm:verification}}]{\ {\bf The proof is organized into the following two steps:}}

\vskip 5pt

\item \textbf{Step one: Representation of $y(\cdot,r)$ under the candidate law $\hat{\mathcal{D}}$}
\vskip 5pt
Fix $(t,x)$ and $r>0$. For simplicity, write
$
X_s:=X_s^{t,x,\hat D},\quad \tau:=\tau^{\hat D},
$
where $\hat{D}=\hat{D}^{t,x,\hat{\mathcal{D}}}$ is the singular
control generated by the law $\hat{\mathcal{D}}$. 
 As $y(\cdot,r)\in C^1(\mathbb R_+)\cap C^2(NT)$ and the second derivative admits finite limits on $\partial NT$,  applying the generalized It\^{o} formula to the semimartingale
$\{
e^{-r(s-t)}y(X_s,r),\  s\in[t,\tau_k]\}$,
we obtain
\begin{align*}
e^{-r(\tau_k-t)}y(X_{\tau_k},r)
&=y(x,r)
+\int_t^{\tau_k} e^{-r(s-t)}\bigl(\mathcal Ly(X_s,r)-r\,y(X_s,r)\bigr)\,\md s \\
&\quad
-\int_t^{\tau_k} e^{-r(s-t)} y_x(X_s,r)\,d\hat{D}^c_s
+\int_t^{\tau_k} e^{-r(s-t)}\sigma(X_s) y_x(X_s,r)\,\md W_s\\
&\quad + \sum_{s\in[t,\tau_k]}e^{-r(s-t)}\left(y(X_{s-}-\Delta \hat{D}_s,r)-y(X_{s-},r)\right),
\end{align*}
where $\hat{D}^c$ is the continuous part of $\hat{D}$ .
We  simplify each of the right side of the last equality as follows:\\
First, by admissibility of the singular dividend law, the reflected surplus process stays in $\overline{\mathcal NT}$ on $[t,\tau)$. Hence, by \eqref{eq:y_system}, we have $\bigl(\mathcal Ly(X_s,r)-r\,y(X_s,r)\bigr)=0$. Consequently, 
\[
\int_t^{\tau_k} e^{-r(s-t)}\bigl(\mathcal Ly(X_s,r)-r\,y(X_s,r)\bigr)\,\md s=0.
\]
Second, as $\hat{D}$ increases only when $X_s\in \mathcal P$ and $y_x(\cdot,r)=1$ on $\mathcal P$, it follows that
\begin{eqnarray*}
&&\int_t^{\tau_k} e^{-r(s-t)}y_x(X_s,r)\,\md \hat{D}^c_s
=\int_t^{\tau_k} e^{-r(s-t)}\,\md \hat{D}^c_s,
\\
&& \sum_{s\in[t,\tau_k]}e^{-r(s-t)}\left(y(X_{s-}-\Delta \hat{D}_s,r)-y(X_{s-},r)\right)=-\sum_{s\in[t,\tau_k]}e^{-r(s-t)} \Delta\hat{D}_s.
\end{eqnarray*}
Rearranging terms gives
\[
y(x,r)
=
\int_t^{\tau_k} e^{-r(s-t)}\,\md\hat D_s
+e^{-r(\tau_k-t)}y(X_{\tau_k},r)
-\int_t^{\tau_k} e^{-r(s-t)}\sigma(X_s) y_x(X_s,r)\,\md W_s.
\]
Taking the conditional expectation under $\mathbb E_{t,x}[\cdot]$, based on Assumption \ref{assump: verimartingale}, the stochastic integral has zero expectation. Consequently,
\[
y(x,r)
=
\mathbb E_{t,x}\!\left[\int_t^{\tau_k} e^{-r(s-t)}\,d\hat D_s\right]
+
\mathbb E_{t,x}\!\left[e^{-r(\tau_k-t)}y(X_{\tau_k},r)\right].
\]
Finally,  as $\tau_k\uparrow \tau$ and $D$ is nondecreasing, by the monotone convergence theorem, letting $k\to\infty  $ yields 
\[
\lim_{k\to\infty}
\mathbb E_{t,x}\!\left[\int_t^{\tau_k} e^{-r(s-t)}\,\md\hat D_s\right]
=
\mathbb E_{t,x}\!\left[\int_t^{\tau} e^{-r(s-t)}\,\md\hat D_s\right].
\]
Moreover, by the fact that $y(0,r)=0$,
\[
\lim_{k\to\infty}
\mathbb E_{t,x}\!\left[e^{-r(\tau_k-t)}y(X_{\tau_k},r)\right]=\lim_{k\to\infty}
\mathbb E_{t,x}\!\left[e^{-r(k-t)}y(X_{\tau_k},r)\right]\cdot \mathbf{1}_{\tau>k}=0.
\]
Therefore,
\[
y(x,r)=\mathbb E_{t,x}\!\left[\int_t^{\tau} e^{-r(s-t)}\,\md \hat D_s\right].
\]
The cumulative dividend payoff under $\hat{\mathcal{D}}$ is time homogeneous. Henceforth, we omit $t$ in the expected cumulative dividend payoff $Y$ and the corresponding aggregate payoff $J$.
\vskip 5pt 
\item \textbf{Step two: Verification of the equilibrium property for $\hat{\mathcal{D}}$}
\vskip 5pt 
Fix an initial state $(t,x)$ with $x>0$, and assume that ruin has not occurred by time $t$. Let $D^\varepsilon$ denote the perturbed strategy defined in \eqref{eq:perturbed}, which is generated by a perturbation process $\delta$ on $[t,t+\varepsilon)$. For simplicity, write
\[
X^{\varepsilon}_s:=X_s^{ D^{\varepsilon}},\qquad \tau^{\varepsilon}:=\tau^{ D^{\varepsilon}}.
\]

If $\delta_t=x$,  ruin occurs at $t$ immediately and the expected discounted dividend payoff equals $\phi(x)$. Using \eqref{additional assump1} and \eqref{additional assump2} yields
$$
\frac{\partial}{\partial d}\int_0^\infty \phi\big(d+y(x-d, r)\big) \md F_\rho(r)=\int_0^\infty \phi'(d+y(x-d,r)) \big[ 1 - y_x(x-d,r) \big] \md F_\rho(r)\le 0.
$$
Hence $\phi(x)\le V(x)$ and the expected payoff under the perturbed strategy is less than the payoff under the candidate equilibrium dividend strategy $\hat{D}$.

If $\delta_t<x$, then we consider the jumps of the initial perturbation separately from the subsequent $O(\varepsilon)$ perturbation. To be clear, we compare the payoffs corresponding to the following three strategies:
\begin{enumerate}
    \item Directly adopting the candidate equilibrium strategy $\hat{D}=D^{t,x,\hat{\mathcal{D}}}$ generated by $\hat{\mathcal{D}}$  ;
    \item Adopting the initial perturbed strategy $D^{0}$, namely first distributing dividends $\delta_t$ and resetting the initial surplus to $x-\delta_t$, then adopting the candidate equilibrium strategy $D^{t,x-\delta_{t},\hat{\mathcal{D}}}$ generated by $\hat{\mathcal{D}}$;
    \item Adopting the perturbed strategy $D^{\varepsilon}$.
\end{enumerate}
Then the difference between the payoff functions corresponding to the perturbed strategy and the candidate equilibrium strategy can be rewritten as
\begin{equation}\label{diffper}
\begin{aligned}
    J(x; D^\varepsilon) - J(x; \hat{D})
    =& \underbrace{J(x; D^\varepsilon) - J(x; D^0)}_{\text{Part 1}}+\underbrace{J(x; D^0)-J(x;\hat{D})}_{\text{Part 2}}.
\end{aligned}
\end{equation}
The first part denotes the expected payoff difference between the perturbed strategy $D^{\varepsilon}$ and the initial perturbed strategy $D^0$. The second part denotes the expected payoff difference between the initial perturbed strategy $D^0$ and the candidate equilibrium strategy $\hat D$. Let $\hat{x}=x-\delta_t>0$.
By Step one, the expected payoff under $D^0$ and $\hat{D}$ is 
$$
J(x; D^0)=\int_0^\infty \phi\big(\delta_t+y(\hat{x}, r)\big) \md F_\rho(r),\quad J(x;\hat{D})=\int_0^\infty \phi\big(y(x, r)\big) \md F_\rho(r).
$$
Using \eqref{additional assump1},  \eqref{additional assump2} and  the same argument when $\delta_t=x$, we have
\begin{eqnarray}\label{dd0}
J(x; D^0)-J(x;\hat{D})=\int_0^\infty \phi\big(\delta_t+y(\hat{x}, r)\big) \md F_\rho(r)-\int_0^\infty \phi\big(y(x, r)\big) \md F_\rho(r)\le 0.
\end{eqnarray}
It remains to show that the first part in \eqref{diffper} is bounded above by $o(\varepsilon)$.

For a given discount rate $r \in \operatorname{Supp}(F_{\rho})$, the expected discounted dividend payoff under the perturbed strategy $D^\varepsilon$ is 
  \begin{equation*}
Y^{D^\varepsilon}(x; r) = \mathbb{E}_{t,x}\left[ \int_t^{(t+\varepsilon)\wedge\tau^{\varepsilon}} e^{-r(s-t)} \md \delta_s + e^{-r\varepsilon\wedge (\tau^{\varepsilon}-t)} y(X^\varepsilon_{(t+\varepsilon-)\wedge\tau^{\varepsilon}}, r) \right].
\end{equation*}
Define a new perturbation process $\hat \delta:=\delta_s-\delta_t$. Using the same arguments as in Step one and  applying the generalized It\^{o} formula to $\{e^{-r(s-t)} y(X^\varepsilon_s, r)\}$,  we have
\begin{equation*}
\begin{aligned}
e^{-r\varepsilon\wedge (\tau^{\varepsilon}-t)} y(X^\varepsilon_{(t+\varepsilon-)\wedge\tau^{\varepsilon}}, r) &= y(\hat{x},r) + \int_t^{(t+\varepsilon)\wedge\tau^{\varepsilon}} e^{-r(s-t)} \big( \mathcal{L}y(X^\varepsilon_s, r) - r y(X^\varepsilon_s, r) \big) \md s \\
&\quad + \int_t^{(t+\varepsilon)\wedge\tau^{\varepsilon}} e^{-r(s-t)} \sigma(X_s^{\varepsilon})y_x(X^\varepsilon_s, r) \md W_s - \int_t^{(t+\varepsilon)\wedge\tau^{\varepsilon}} e^{-r(s-t)} y_x(X^\varepsilon_{s}, r) \md\delta^{c}_s\\
&\quad +\sum_{s\in[t,(t+\varepsilon)\wedge\tau^{\varepsilon}]}e^{-r(s-t)}\left(y(X_{s-}-\Delta \hat{\delta}^{\varepsilon}_s,r)-y(X_{s-},r)\right)  .
\end{aligned}
\end{equation*}
Taking the conditional expectation $\mathbb{E}_{t,x}[\cdot]$ on both sides of the last equation, the martingale term associated with $\md W_s$ vanishes. Let us define the difference in the expected dividend payoff as $\Delta_{\delta_t}^{\varepsilon} Y(x; r):= Y^{D^\varepsilon}(x; r) - y(\hat{x}, r)-\delta_t$. Rearranging the terms, we obtain
\begin{equation}\label{y diff}
\begin{aligned}
\Delta_{\delta_t}^{\varepsilon} Y(x; r) = & \mathbb{E}_{t,x} \bigg[ \int_t^{(t+\varepsilon)\wedge\tau^{\varepsilon}} e^{-r(s-t)} \big(\mathcal{L}y - ry\big)(X^\varepsilon_s, r) \md s + \int_t^{(t+\varepsilon)\wedge\tau^{\varepsilon}} e^{-r(s-t)} \big(1 - y_x(X^\varepsilon_{s}, r)\big) \md \delta^{c}_s  \\
& +  \sum_{s\in[t,(t+\varepsilon)\wedge\tau^{\varepsilon}]}e^{-r(s-t)}\int_0^{\Delta\hat\delta_s}(1-y_x(X^\varepsilon_{s-}-u, r))\md u\bigg] \\
= &\mathbb{E}_{t,x} \bigg[ \int_t^{(t+\varepsilon)\wedge\tau^{\varepsilon}}  \big(\mathcal{L}y - ry\big)(X^\varepsilon_s, r) \md s + \int_t^{(t+\varepsilon)\wedge\tau^{\varepsilon}}  \big(1 - y_x(X^\varepsilon_{s}, r)\big) \md \delta^{c}_s  \\
& +  \sum_{s\in[t,(t+\varepsilon)\wedge\tau^{\varepsilon}]}\int_0^{\Delta\hat\delta_s}(1-y_x(X^\varepsilon_{s-}-u, r))\md u\bigg] + o(\varepsilon).
\end{aligned}
\end{equation}
Here $\delta^{c}$ denotes the continuous part of $\delta$, which is the same as $\hat \delta^{c}$. The replacement of \(e^{-r(s-t)}\) by \(1\) in \eqref{y diff} only contributes an
\(o(\varepsilon)\) error. Indeed, $
|e^{-r(s-t)}-1|\le C\varepsilon 
$ as \(r\) is bounded and
\(s\in[t,t+\varepsilon]\).
By the polynomial growth condition \eqref{condition: growth} and Lemma \ref{lma:deviationestimation},  the total error caused by
removing the discount factors is \(O(\varepsilon^2)=o(\varepsilon)\).

The overall payoff functional aggregates the expected discounted dividend payoffs over the distribution $F_\rho$:
\begin{equation*}
J(x; D^\varepsilon) = \int_0^\infty \phi \Big( y(\hat{x},r) +\delta_t+ \Delta_{\delta_t}^{\varepsilon} Y(x;r) \Big) \text{d}F_\rho(r).
\end{equation*}
 By the concavity and differentiability of $\phi$, we have
\begin{equation*}
J(x; D^\varepsilon) \le \int_0^\infty \Big[ \phi(\delta_t+y(\hat{x},r)) + \phi'(\delta_t+y(\hat{x},r)) \Delta_{\delta_t}^{\varepsilon} Y(x;r) \Big] \text{d}F_\rho(r) .
\end{equation*}
Then, the first part of the difference in the objective functional can be estimated by
\begin{equation*}
\begin{aligned}
J(x; D^\varepsilon) - J(x; D^0) \le & \mathbb{E}_{t,x} \bigg[ \int_t^{(t+\varepsilon)\wedge\tau^{\varepsilon}} \int_0^\infty \phi'(\delta_t+y(\hat{x},r)) \big(\mathcal{L}y - ry\big)(X^\varepsilon_s, r) \md F_\rho(r) \md s \\
&+ \int_t^{(t+\varepsilon)\wedge\tau^{\varepsilon}} \int_0^\infty \phi'(\delta_t+y(\hat{x},r))\big(1 - y_x(X^\varepsilon_{s}, r)\big)\md F_\rho(r) \md \delta^{c}_s \\
&+\sum_{s\in[t,(t+\varepsilon)\wedge\tau^{\varepsilon}]}\int_0^{\Delta\hat\delta_s}\int_0^\infty \phi'(\delta_t+y(\hat{x},r))(1-y_x(X^\varepsilon_{s-}-u, r))\md F_\rho(r)\md u \bigg]+o(\varepsilon).
\end{aligned}
\end{equation*}
To bound the three terms on the right hand side of the last inequality, we fix $0<\kappa < \frac{1}{2}\hat{x}$ and define the large deviation event $E_\varepsilon = \left\{ \sup\limits_{s \in [t, t+\varepsilon]} |X^\varepsilon_s -\hat x| \geq \kappa \right\}$. 
Now we decompose the difference estimation into two parts:

\textbf{Part 1: Estimation on $E_\varepsilon^c$.}
On the set $E_\varepsilon^c$, we have $|X^\varepsilon_s - \hat x| < \kappa$ for all $s \in [t, t+\varepsilon]$. To proceed, we replace the constant multiplier $\phi'(\delta_t+y(\hat{x},r))$ with the stochastic multiplier $\phi'(\delta_t + y(X^\varepsilon_s, r))$. 
We show that this replacement introduces only an  $o(\varepsilon)$ error and thus we  aim to estimate the following three residue terms:
\begin{eqnarray*}\!\!\! \!\!\!
&&R_1(\varepsilon) = \mathbb{E}_{t,x} \left[ \int_t^{(t+\varepsilon)\wedge\tau^{\varepsilon}} \int_0^\infty \left| \phi'(\delta_t + y(X^\varepsilon_s, r)) - \phi'(\delta_t + y(\hat x, r)) \right| \cdot |(\mathcal{L}y-ry)(X^\varepsilon_s, r)| 1_{E_{\varepsilon^c}}\md F_\rho(r) \md s \right].\\
&&
R_2(\varepsilon) = \mathbb{E}_{t,x} \left[ \int_t^{(t+\varepsilon)\wedge\tau^{\varepsilon}} \int_0^\infty \left| \phi'(\delta_t + y(X^\varepsilon_s, r)) - \phi'(\delta_t + y(\hat x, r)) \right| \cdot |\big(1 - y_x(X^\varepsilon_{s}, r)\big)|1_{E_{\varepsilon^c}}\md F_\rho(r) \md \delta^{c}_s \right].\\
&&
R_3(\varepsilon) = \mathbb{E}_{t,x} \left[ \sum_{s\in[t,(t+\varepsilon)\wedge\tau^{\varepsilon}]}\int_0^{\Delta\hat\delta_s}\int_0^\infty \left| \phi'(\delta_t \!+\! y(X^\varepsilon_{s-}\!-\!u, r)) \!\!-\!\! \phi'(\delta_t \!+ \!y(\hat x, r)) \right| \cdot |\big(1 \!-\! y_x(X^\varepsilon_{s-}\!-\!u, r)\big)|1_{E_{\varepsilon^c}}\md F_\rho(r) \md u \right].
\end{eqnarray*}
By Assumption \ref{phiassump}, $\phi''$ is bounded in $[\delta_t+y(\frac{1}{2}\hat{x},\bar r), \delta_t+y(\frac{3}{2}\hat{x}, \underline{r})]$, which implies that $\phi'$ is Lipschitz continuous on this interval. Furthermore, by the regularity of $y(\cdot, r)$, the function $y$ is locally Lipschitz in $x$. Thus, there exists a constant $C > 0$ such that
\begin{eqnarray*}
&&|\phi'(\delta_t + y(X^\varepsilon_s, r)) - \phi'(\delta_t + y(\hat x, r))| \leq C |X^\varepsilon_s - \hat x|\le \sup_{s\in[t,t+\varepsilon]}|X_s^{\varepsilon}-\hat x|.
\\
&&
\left| \phi'(\delta_t + y(X^\varepsilon_{s-}-u, r)) - \phi'(\delta_t + y(\hat x, r)) \right|\le C |X^\varepsilon_{s-}-u - \hat x|\le \sup_{s\in[t,t+\varepsilon]}|X_s^{\varepsilon}-\hat x|.
\end{eqnarray*}
Using the growth condition \eqref{condition: growth} and the fact that $\sup_{s \in [t, t+\varepsilon]} |X^\varepsilon_s - \hat{x}| < \kappa$, $|(\mathcal{L}y-ry)(X^\varepsilon_s, r)|$ is uniformly bounded. Using the Hölder's inequality and \eqref{EX} in Lemma \ref{lma:deviationestimation}, we have 
\[
R_1(\varepsilon) \leq C \int_t^{t+\varepsilon} \mE_{t,x}\left[\sup_{s\in[t,t+\varepsilon]}|X_s^{\varepsilon}-\hat x|1_{E_{\varepsilon^c}}\right] ds = o(\varepsilon).
\]
As $\delta_{t+\varepsilon}-\delta_t\le M\varepsilon$ and $|1-y_x|$ are also bounded for the same reason,  the corresponding contributions to $R_2, R_3$ are bounded by
\[
R_2(\varepsilon)+R_3(\varepsilon) \le CM\varepsilon \mE_{t,x}\left[\sup_{s\in[t,t+\varepsilon]}|X_s^{\varepsilon}-\hat x|1_{E_{\varepsilon^c}}\right]=o(\varepsilon).
\]

\textbf{Part 2: Estimation on $E_\varepsilon$.} \  On  $E_{\varepsilon}$, we directly show that the difference term is $o(\varepsilon)$. Using the polynomial growth condition \eqref{condition: growth} on $y$, Hölder's inequality and Inequality \eqref{EX1} in Lemma \ref{lma:deviationestimation}, we have

\begin{eqnarray}   &&\mathbb{E}_{t,x}\left[\int_t^{(t+\varepsilon)\wedge\tau^{\varepsilon}}\int_0^\infty \phi'(\delta_t + y(\hat x, r))|(\mathcal{L}y-ry)(X^\varepsilon_s, r)| 1_{E_{\varepsilon}}\md F_\rho(r) ds\right] \nonumber\\
   && \le  C\phi'(\delta_t + y(\hat x, \bar r))\varepsilon\mE_{t,x}\left[\big(1+\sup_{s\in[t,t+\varepsilon]}|X_s^{\varepsilon}|^q\big)\cdot 1_{E_{\varepsilon}}\right] \nonumber\\
    &&\le  C\phi'(\delta_t + y(\hat x, \bar r))\varepsilon \mE_{t,x}\left[\big(1+\sup_{s\in[t,t+\varepsilon]}|X_s^{\varepsilon}|^q\big)^2\right]^{\frac{1}{2}}\mathbb{P}_{t,x}(E_\varepsilon)^{\frac{1}{2}}=o(\varepsilon). \label{P1}
\end{eqnarray}
Similarly,
\begin{eqnarray}
    &&\mathbb{E}_{t,x} \Bigg [\int_0^\infty  \phi'(\delta_t + y(\hat x, r)) \Bigg(\int_t^{(t+\varepsilon)\wedge\tau^{\varepsilon}} |\big(1 - y_x(X^\varepsilon_{s}, r)\big)|1_{E_{\varepsilon}} \md \delta^{c}_s\nonumber\\
    && +\sum_{s\in[t,(t+\varepsilon)\wedge\tau^{\varepsilon}]}\int_0^{\Delta\hat\delta_s}|\big(1 - y_x(X^\varepsilon_{s-}-u, r)\big)|1_{E_{\varepsilon}} \md u\Bigg) \md F_\rho(r) \Bigg]\nonumber\\
    &&\le  C\phi'(\delta_t + y(\hat x, \bar r))(M+1)\varepsilon \mE_{t,x}\Bigg[\big(1+\sup_{s\in[t,t+\varepsilon]}|X_s^{\varepsilon}|^q\big)^2\Bigg]^{\frac{1}{2}}\mathbb{P}_{t,x}(E_\varepsilon)^{\frac{1}{2}}=o(\varepsilon).\label{P2}
\end{eqnarray}
Then, using  Estimations on $ E_\varepsilon^c$ of {\bf  Part 1} and Inequalities \eqref{P1}-\eqref{P2}, we have 
\begin{equation*}
\begin{aligned}
J(x; D^\varepsilon) - J(x; D^0) \le& \mathbb{E}_{t,x} \left[ \int_t^{(t+\varepsilon)\wedge\tau^{\varepsilon}} G(\delta_t,X^\varepsilon_s)\cdot 1_{E_{\varepsilon^c}} \md s \right] + \mathbb{E}_{t,x} \left[ \int_t^{(t+\varepsilon)\wedge\tau^{\varepsilon}} H(\delta_t,X^\varepsilon_{s}) \cdot 1_{E_{\varepsilon^c}}\md\delta^{\varepsilon,c}_s \right] \\
& +\mathbb{E}_{t,x} \left[\sum_{s\in[t,(t+\varepsilon)\wedge\tau^{\varepsilon}]}\int_0^{\Delta\hat\delta_s} H(\delta_t,X^\varepsilon_{s-}-u)\cdot 1_{E_{\varepsilon^c}} \md u \right] + o(\varepsilon),
\end{aligned}
\end{equation*}
\noindent
where  the aggregate drift function $G(d,z)$ and aggregate marginal impact of dividend $H(d,z)$  are 
\begin{align*}
G(d,z) &:= \int_0^\infty \phi'(d+y(z,r)) \big(\mathcal{L}y(z, r) - r y(z, r)\big) \text{d}F_\rho(r)\le 0 ,\\
H(d,z) &:= \int_0^\infty \phi'(d+y(z,r)) \big(1 - y_x(z, r)\big) \text{d}F_\rho(r)\le 0.
\end{align*}
Here, the non-positivity is due to Conditions \eqref{additional assump1}-\eqref{additional assump2}. It follows that
\begin{eqnarray}\label{dd1}
J(x; D^\varepsilon) - J(x; D^0) \le 0 + o(\varepsilon)= o(\varepsilon).    
\end{eqnarray}
Using \eqref{dd0} and \eqref{dd1} yields
\begin{equation*}
J(x; D^\varepsilon) - J(x; \hat{D}) = J(x; D^\varepsilon)-J(x; D^0)+J(x; D^0)- J(x; \hat{D})\le o(\varepsilon).
\end{equation*}
Taking the limit infimum as $\varepsilon \downarrow 0$, we have 
\begin{equation*}
\liminf_{\varepsilon \downarrow 0} \frac{J(x; \hat{D}) - J(x; D^\varepsilon)}{\varepsilon} \ge 0.
\end{equation*}
 Thus the candidate law $\hat{\mathcal{D}}$ is an equilibrium dividend law. 
\end{proof}
\begin{remark}
    The initial perturbed strategy $D^0$ introduced in the proof characterizes the central planner's flexibility at the very beginning of the intervention. Specifically, $D^0$ represents the strategy under which, immediately after an initial lump-sum dividend $\delta_t$ is paid, the firm still retains the option to distribute further dividends at the same instant $t$ if the resulting state $x - \delta_t$ is still in the pay region according to the equilibrium law. However, due to the non-decreasing constraint on the dividend process $D$, this flexibility is one-sided: while further dividends can be added, any dividends already distributed cannot be reclaimed. 
    
    Notably, while $D^0$ is not an admissible perturbed strategy, it is crucial for comparing the equilibrium strategy and the perturbed strategy $D^\varepsilon$ (see Remark \ref{rmk: explainadditionasump}). Moreover, it is reasonable that $D^0$ is dominated by the equilibrium strategy. Otherwise, the central planner could increase the initial payoff via instantaneous dividend payouts, which contradicts the equilibrium.
\end{remark}
\begin{remark}\label{rmk: explainadditionasump}
Here we discuss Conditions \eqref{additional assump1} and \eqref{additional assump2}.
If we directly compute the difference between $Y^{D^{\varepsilon}}(x,r)$ and $y(x,r)$, then $\Delta Y(x; r):= Y^{D^\varepsilon}(x; r) - y(x, r)$ is the same as in \eqref{y diff}, except that $\Delta \hat{\delta}$ is replaced by $\Delta\delta$ and the difference in the objective functional
can be estimated by
\begin{equation}\label{eq:J_diff}
J(x; D^\varepsilon) - J(x; \hat{D}) \le \int_0^\infty \phi'(y(x,r)) \Delta Y(x; r) \md F_\rho(r) .
\end{equation}
Substituting the integral representation of $\Delta Y(x;r)$  into \eqref{eq:J_diff}, we have
\begin{equation*}
J(x; D^\varepsilon) - J(x; \hat{D}) \le \mathbb{E}_{t,x} \left[ \int_t^{t+\varepsilon} \int_0^\infty \phi'(y(x,r)) \big(\mathcal{L}y - ry\big)(X^\varepsilon_s, r) \md F_\rho(r) \md s + \cdots \right].
\end{equation*}
To proceed, we must replace the starting point evaluation $\phi'(y(x,r))$ with the random state evaluation $\phi'(y(X^\varepsilon_s,r))$.  However, if $\delta_t>0$, $|X^\varepsilon_s - x|$ does not converge to $0$ as $\varepsilon\rightarrow 0$ and such a replacement cannot be justified with an  $o(\varepsilon)$ error. 

This motivates the introduction of the intermediate strategy $D^0$. The variational inequality \eqref{variational inequality} is insufficient for our verification theorem. Therefore, the additional conditions \eqref{additional assump1} and \eqref{additional assump2} are needed. Moreover, Condition \eqref{additional assump1} is needed to ensure that the candidate equilibrium strategy $\hat{D}$ dominates the initial perturbed strategy $D^0$. 

The reason is that, when $\phi$ is nonlinear, an initial lump-sum dividend changes the marginal aggregation weights and hence the structure of the payoff. Although the mean-variance dividend problem investigated in \cite{cao2026equilibrium} involves the square of the cumulative discounted dividends, the variance term is unaffected by an initial deterministic lump-sum dividend. 
\end{remark}

\section{Necessary Condition for Equilibrium}\label{sec:Necessary Condition}
In this section, we derive a necessary condition for the equilibrium dividend law, which will be used later to seek barrier equilibria in special cases and to prove that a certain class of aggregations admits no barrier equilibria.
\begin{theorem}[Necessity of the Equilibrium Law]\label{thm:nes}
Let $\hat{\mathcal{D}} = (\mathcal{NT}, \mathcal{P})$ be an equilibrium singular dividend law and let $V(x)$ be the corresponding equilibrium value function. For a fixed discount rate $r \in \operatorname{Supp}(F_\rho)$, define $y(x, r)$ as the expected discounted dividend payoff generated by $\hat{\mathcal{D}}$,
\begin{equation*}
y(x, r) := \mathbb{E}_x \left[ \int_0^{\tau^{\hat{D}}} e^{-rs} \md\hat{D}_s \right].
\end{equation*}
Assume that Assumption \ref{phiassump} holds and  for any $r\in \operatorname{Supp}(F_{\rho})$, $y(\cdot, r) \in C^1(\mathbb{R}_+) \cap C^2(\mathcal{NT})$, the second derivative $y_{xx}(\cdot, r)$ has well-defined limits at the boundary $\partial \mathcal{NT}$. Moreover, we also assume that for any starting state $x \in \mR_+$ and any perturbation process $\delta$, there exists an $
\varepsilon_0=\varepsilon_0(x,\delta)$\footnote{In the undisturbed setting with exclusive use of equilibrium strategies, we also postulate the existence of an $\varepsilon_0$ for which the same condition holds.} such that the following integrability conditions hold
\begin{eqnarray} 
&&\mathbb{E}_{x} \left[ \int_0^{\varepsilon_0} e^{-2r(s-t)} \sigma^2 (X_s^{\varepsilon})\left(y_x(X^{\varepsilon}_s, r)\right)^2 \md s \right] < \infty,\label{nec:martingale_cond} \\ 
&& \mathbb{E}_{x} \left[ \sup_{\varepsilon \in (0, \varepsilon_0)} \frac{1}{\varepsilon} \left| \int_0^{\varepsilon} e^{-rs} \left( \mathcal{L}y(X_s^{\varepsilon}, r) - r y(X_s^{\varepsilon}, r) \right) \md s \right| \right] < \infty. \label{nec:uicond}
\end{eqnarray}
Then $y(x, r)$  satisfy the following conditions for any $r\in \operatorname{Supp}(F_{\rho})$.
\begin{enumerate}
    \item $y(x, r)$ satisfies the following system
    \begin{equation}\label{eq:HJB nes}
    \begin{cases} 
    \mathcal{L}y(x, r) - ry(x, r) = 0, & x \in \mathcal{NT}, \\
    y_x(x, r) = 1, & x \in \mathcal{P}
    \end{cases}
    \end{equation}
    with the boundary condition $y(0, r) = 0$.
    \item For any $x >0$, the variational inequality holds
    \begin{equation}\label{variational inequality nes}
    \max \left\{ \int_0^\infty \phi'(y(x, r))(1 - y_x(x,r)) \md F_\rho(r), \int_0^\infty \phi'(y(x, r)) (\mathcal{L}y(x, r) - ry(x, r)) \md F_\rho(r) \right\} = 0.
    \end{equation}
    \item For any $x \in \mathcal{P}$ such that the interval $[x, x+d] \subset \mathcal{P}$ for some $d > 0$, the aggregate drift condition holds
    \begin{equation}\label{additional nes}
    \int_0^\infty \phi'(d + y(x, r)) (\mathcal{L}y(x, r) - ry(x, r)) \md F_\rho(r) \leq 0.
    \end{equation}
\end{enumerate}
\end{theorem}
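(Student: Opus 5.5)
The plan is to establish the three items in order, each by choosing a specific perturbation $\delta$ in Definition \ref{def:equilibrium} and expanding $J(x;D^\varepsilon)-J(x;\hat{\mathcal D})$ to first order in $\varepsilon$. Each expansion will follow the It\^o computation already used in the proof of Theorem \ref{thm:verification}. The difference is that concavity of $\phi$ now has to be replaced by a genuine Taylor expansion: $\phi\in C^2$, so $\phi(y+\Delta)=\phi(y)+\phi'(y)\Delta+O(\Delta^2)$ locally. The remainder is negligible because each $\Delta Y(x;r)$ will be $O(\varepsilon)$ uniformly in $r\in[\underline r,\bar r]$.

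\textbf{Item 1.} This is essentially a Feynman--Kac / Skorokhod argument with no game-theoretic content. For $x\in\mathcal{NT}$ (open), I take $\hat D\equiv0$ up to the exit time from a small ball. By the strong Markov property, $y(x,r)=\mathbb E_x[e^{-r\theta}y(X_\theta,r)]$ for the exit time $\theta$ of the uncontrolled diffusion. Applying It\^o's formula (the martingale term vanishes by \eqref{nec:martingale_cond}) gives
\[
\mathbb E_x\Bigl[\int_0^{\theta\wedge\varepsilon}e^{-rs}(\mathcal Ly-ry)(X_s,r)\,\md s\Bigr]=0.
\]
Dividing by $\varepsilon$ and using \eqref{nec:uicond} together with dominated convergence yields $\mathcal Ly-ry=0$ at $x$. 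For $x\in\mathcal P$, an interior point of $\mathcal P$ is reflected at once to $\partial\mathcal{NT}$ by a lump sum, so $y(x,r)=(x-x_b)+y(x_b,r)$, where $x_b$ is the nearest point of $\overline{\mathcal{NT}}$ below $x$; hence $y_x=1$. At boundary points of $\mathcal P$, $y_x=1$ follows from continuity of $y_x$ and a one-sided reflection argument: the local time term in It\^o's formula must match $\md\hat D$. Finally, $y(0,r)=0$ holds because ruin is immediate.

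\textbf{Item 2.} I use two perturbations. First, the constant-rate perturbation $\delta_s=M(s-t)$, with no initial jump. Repeating \eqref{y diff} with $\hat x=x$ and expanding $\phi$ gives
\[
J(x;D^\varepsilon)-V(x)=\varepsilon\bigl(G(0,x)+M\,H(0,x)\bigr)+o(\varepsilon),
\]
where $G,H$ are the aggregate drift and marginal-dividend functions from the proof of Theorem \ref{thm:verification}. Here the replacement of $X^\varepsilon_s$ by $x$ is justified by Lemma \ref{lma:deviationestimation} and the continuity of $y,y_x$, and, on $\mathcal{NT}$, of $\mathcal Ly$; \eqref{nec:uicond} handles uniform integrability near $\partial\mathcal{NT}$. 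The equilibrium inequality therefore forces $G(0,x)+MH(0,x)\le0$ for every $M\ge0$. Taking $M=0$ gives $G(0,x)\le0$, and letting $M\to\infty$ gives $H(0,x)\le0$. To get equality in the max, I use item 1: on $\mathcal{NT}$ each $\mathcal Ly-ry$ vanishes, so $G(0,x)=0$, and on $\mathcal P$ each $y_x=1$, so $H(0,x)=0$. This step also covers the zero-perturbation case and shows that the max is attained.

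\textbf{Item 3.} Take $x$ with $[x,x+d]\subset\mathcal P$ and start from $x+d$. I use the perturbation with initial jump $\delta_t=d$ followed by no further dividends on $[t,t+\varepsilon)$. Since $y(x+d,r)=d+y(x,r)$ by item 1, we get $V(x+d)=\int\phi(d+y(x,r))\,\md F_\rho$. The perturbed payoff for each $r$ is $d+y(x,r)+\varepsilon(\mathcal Ly-ry)(x,r)+o(\varepsilon)$, because the state is $x$ right after the jump and the process diffuses freely for time $\varepsilon$; no dividends are paid even though $x\in\mathcal P$, and $y$ is evaluated along the path. Expanding $\phi$ around $d+y(x,r)$ gives
\[
J(x+d;D^\varepsilon)-V(x+d)=\varepsilon\int_0^\infty\phi'(d+y(x,r))(\mathcal Ly-ry)(x,r)\,\md F_\rho(r)+o(\varepsilon),
\]
and the equilibrium condition yields \eqref{additional nes}. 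Note that $\mathcal Ly$ at $x\in\mathcal P$ is understood with the one-sided second derivative, which is why $y$'s boundary limits are assumed.

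The main obstacle is the rigorous first-order expansion at points of $\mathcal P$ and of $\partial\mathcal{NT}$, where $y_{xx}$ may jump and the free diffusion leaves $\overline{\mathcal{NT}}$. There I would use the generalized It\^o formula with one-sided second derivatives, and apply \eqref{nec:uicond} with dominated convergence to pass $\frac1\varepsilon\int_0^\varepsilon\to$ the value at the starting point, taking the right/left limit as appropriate.
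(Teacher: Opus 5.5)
Your proposal is correct, and items 1 and 3 follow the paper's argument. In item 1 the paper also kills the martingale term with \eqref{nec:martingale_cond} on $[0,\hat\tau_\varepsilon]$ and divides by $\varepsilon$ using \eqref{nec:uicond}. In item 3 it uses exactly your ``pay $d$, wait $\varepsilon$, resume'' perturbation, written from $x$ with $[x-d,x]\subseteq\mathcal P$ rather than from $x+d$. Your justification of $y_x=1$ on $\mathcal P$ (the lump-sum identity in the interior, and matching the reflection term at reflecting boundary points) is more careful than the paper's one-line assertion.

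The genuine difference is in item 2, in how you obtain $H(0,x)=\int_0^\infty\phi'(y(x,r))(1-y_x(x,r))\,\md F_\rho(r)\le 0$.
\begin{itemize}
\item The paper gets $G(0,x)\le0$ from the pure wait perturbation, which is your $M=0$ case.
\item It gets $H(0,x)\le0$ by a zeroth-order argument. Take the perturbation ``pay a lump sum $d\in(0,x)$, wait $\varepsilon$, resume'' and let $\varepsilon\downarrow0$ in the equilibrium inequality. This gives $V(x)\ge\int_0^\infty\phi(d+y(x-d,r))\,\md F_\rho(r)$ for every such $d$, and the right derivative at $d=0$ is $H(0,x)$.
\item You instead use the bounded-rate perturbation $\delta_s=M(s-t)$. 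You read both inequalities off the single first-order relation $G(0,x)+MH(0,x)\le 0$, taking $M=0$ and then $M\to\infty$.
\end{itemize}

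Your route shows the marginal condition is already forced by absolutely continuous deviations. It would therefore survive if perturbations were restricted to bounded rates, as in the comparison of Section~\ref{sec:linear}. The cost is that you need the full $o(\varepsilon)$ expansion including the dividend term. The hypotheses supply this, since they are stated for every perturbation $\delta$.

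The paper's route needs only continuity of $\varepsilon\mapsto J(x;D^{\varepsilon,d})$ at $0$. It also yields a stronger nonlocal fact: no immediate lump sum of any size is profitable. This is the property that \eqref{additional assump1} is used to secure in the verification theorem.

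One small correction to your closing remark. At a point where $y_{xx}(\cdot,r)$ jumps, $\varepsilon^{-1}\mathbb{E}_x\int_0^\varepsilon(\mathcal{L}y-ry)(X_s,r)\,\md s$ does not converge to a one-sided limit. It converges to the average of the two one-sided values, because the diffusion asymptotically spends half its time on each side. This ambiguity in the meaning of $\mathcal{L}y$ on $\partial\mathcal{NT}$ comes from the statement itself, and the paper's proof glosses over it too.
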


\begin{proof}
Notice that $y(0, r) = \mathbb{E}_0[\int_0^0 e^{-rs} \md D_s] = 0$. In the pay region $\mathcal{P}$, by Definition \ref{def:admissible}, one must immediately pay some dividends to push the surplus $x$ into the no-transaction region $\mathcal{NT}$. Thus, $y_x(x,r)=1$ in $\mathcal{P}$. 

  For $x\in\mathcal{NT}$ at initial time $0$, during the interval $[0, \hat \tau_\varepsilon)$, the surplus process $X$ follows the uncontrolled SDE: $dX_s = \mu(X_s) ds + \sigma(X_s) dW_s$. Here we define  $\hat \tau_\varepsilon = \inf\{s \ge 0 : X_s \notin \mathcal{NT}\} \wedge \varepsilon$ to be a bounded stopping time. Applying the generalized It\^{o} formula to the process $\{M_s := e^{-rs}y(X_s, r)\}$ on $[0, \hat \tau_\varepsilon]$, we obtain
\begin{align*}
e^{-r\hat \tau_\varepsilon}y(X_{\hat \tau_\varepsilon}, r) = y(x, r) &+ \int_0^{\hat \tau_\varepsilon} e^{-rs} \left( \mathcal{L}y(X_s, r) - ry(X_s, r) \right) \md s + \int_0^{\hat \tau_\varepsilon} e^{-rs} \sigma(X_s) y_x(X_s, r) \md W_s.
\end{align*}
Taking the conditional expectation $\mathbb{E}_{x}[\cdot]$ and utilizing Condition \eqref{nec:martingale_cond}, the stochastic integral term vanishes.   Definition of $y(x, r)$  and noting that no dividends are paid before $\hat\tau_\varepsilon$ yield 
\begin{equation*}
\mathbb{E}_{x} \left[ \int_0^{\hat \tau_\varepsilon} e^{-rs} \left( \mathcal{L}y(X_s, r) - ry(X_s, r) \right) \md s \right] = 0.
\end{equation*}
By the regularity condition of $y$ in $\mathcal{NT}$ and the uniform integrability \eqref{nec:uicond}, dividing by $\varepsilon$ and letting $\varepsilon \downarrow 0$  yield $\mathcal{L}y(x, r) - ry(x, r) = 0$ for all $x \in \mathcal{NT}$. 

To derive \eqref{variational inequality nes}, in light of \eqref{eq:HJB nes}, we only need to further prove the following inequalities:
\begin{eqnarray*}
    &&\int_0^\infty \phi'(y(x, r))(1 - y_x(x, r)) \md F_\rho(r)\le 0,\\
&&    \int_0^\infty \phi'(y(x, r)) (\mathcal{L}y(x, r) - ry(x, r)) \md F_\rho(r)\le 0.
\end{eqnarray*}
Consider a perturbed strategy\footnote{Here we add an additional superscript $d$ to emphasize the dependence of this strategy on the initial lump-sum payment $d$.} $D^{\varepsilon,d}$ starting from $x$ under which the firm first makes a lump-sum payment of size $0<d<x $, pays no dividends during $[0,\varepsilon)$  and thereafter pays dividends in accordance with the equilibrium dividend law $\hat{\mathcal{D}}$. Let $\varepsilon\rightarrow0$, then  the payoff functional is
$$
\lim_{\varepsilon\rightarrow0+}J(x; D^{\varepsilon,d}) = \lim_{\varepsilon\rightarrow 0+}\int_0^\infty \phi(d + \mathbb{E}_{x-d}\left[e^{-r(\varepsilon\wedge\tau_{\varepsilon})}y(X_{\varepsilon\wedge\tau_{\varepsilon}}, r)\right]) \md F_\rho(r) = \int_0^\infty \phi(d + y(x-d, r)) \md F_\rho(r).
$$
As $\hat{\mathcal{D}}$ is an equilibrium dividend law,  $J(x; \hat{\mathcal{D}}) - J(x; D^{\varepsilon,d}) \geq -o(\varepsilon)$ for sufficiently small $\varepsilon$, which implies $\int_0^\infty \phi( y(x, r)) \md F_\rho(r)-\int_0^\infty \phi(d + y(x-d, r)) \md F_\rho(r)\ge 0$ , thus 
\begin{equation*}
\frac{\partial}{\partial d} \left. \int_0^\infty \phi(d + y(x-d, r)) \md F_\rho(r) \right|_{d=0} = \int_0^\infty \phi'(y(x, r)) (1-y_x(x, r) ) \md F_\rho(r) \leq 0.
\end{equation*}
Let $D^\varepsilon$ be a strategy that forces a ``wait" period of length $\varepsilon$ at $x$ before resuming $\hat{\mathcal{D}}$. Then the expected payoff under $D^\varepsilon$ for a fixed $r$ is $Y^\varepsilon(x; r) = \mathbb{E}_{x}[e^{-r(\varepsilon\wedge\tau_\varepsilon)} y(X^{\varepsilon}_{\varepsilon\wedge\tau_{\varepsilon}}, r)]$. Using Taylor expansion yields
\begin{equation*}
J(x; D^\varepsilon)-J(x;\hat{\mathcal{D}}) = \int_0^\infty \phi^{'}(y(x, r))\left(\mathbb{E}_{x}\left[e^{-r(\varepsilon\wedge\tau_\varepsilon)}y(X^{\varepsilon}_{\varepsilon\wedge\tau_\varepsilon},r)\right]-y(x,r)\right)  \md F_\rho(r)+Res.
\end{equation*}
The second-order residue term satisfies
$$
Res\le\int_0^\infty\frac{1}{2}\sup\limits_{z\in \mathcal{Z}} |\phi^{''}(z)|\left(\mathbb{E}_{x} \left[ \left| \int_0^{\varepsilon\wedge\tau_\varepsilon} e^{-rs} \left( \mathcal{L}y(X_s^{\varepsilon}, r) - r y(X_s^{\varepsilon}, r) \right) \md s \right| \right]\right)^2 \md F_\rho(r)=o(\varepsilon).
$$
Define  a closed interval: 
$$\mathcal{Z}:=\left[y(x,r)-\left|\mathbb{E}_{x}\left[e^{-r(\varepsilon\wedge\tau_\varepsilon)}y(X^{\varepsilon}_{\varepsilon\wedge\tau_\varepsilon},r)\right]-y(x,r)\right|,y(x,r)+\left|\mathbb{E}_{x}\left[e^{-r(\varepsilon\wedge\tau_\varepsilon)}y(X^{\varepsilon}_{\varepsilon\wedge\tau_\varepsilon},r)\right]-y(x,r)\right|\right].$$
As $\mathbb{E}_{x}\left[e^{-r(\varepsilon\wedge\tau_\varepsilon)}y(X^{\varepsilon}_{\varepsilon\wedge\tau_\varepsilon},r)\right]-y(x,r)=\mathbb{E}_{x} \left[  \int_0^{\varepsilon\wedge\tau_\varepsilon} e^{-rs} \left( \mathcal{L}y(X_s^{\varepsilon}, r) - r y(X_s^{\varepsilon}, r) \right) \md s  \right]$ is  an $O(\varepsilon)$ term, $\mathcal{Z}$ is a compact subset of $\mR_+$ for sufficiently small $\varepsilon$.
Applying the equilibrium condition $\lim_{\varepsilon \downarrow 0} \frac{J(x; \hat{\mathcal{D}}) - J(x; D^\varepsilon)}{\varepsilon} \geq 0$ yields
\begin{equation*}
\int_0^\infty \phi'(y(x, r)) (\mathcal{L}y(x, r) - ry(x, r)) \md F_\rho(r) \leq 0.
\end{equation*}

Finally, we consider the necessity of \eqref{additional nes} for $x \in \mathcal{P}$. Suppose $[x-d, x] \subseteq \mathcal{P}$. Due to $y_x = 1$ in $\mathcal{P}$, we have $y(x; r) = d + y(x-d; r)$. This identity implies that the equilibrium strategy $\hat{\mathcal{D}}$ at $x$ is identical in payoff to the initial perturbed strategy $D^0$ which pays $d$ immediately and then follows $\hat{\mathcal{D}}$ from $x-d$:
\begin{equation*}
J(x; \hat{\mathcal{D}}) = \int_0^\infty \phi(y(x, r)) \md F_\rho(r) = \int_0^\infty \phi(d + y(x-d, r)) \md F_\rho(r) = J(x; D^0).
\end{equation*}
Now, consider the perturbed strategy $D^{\varepsilon,d}$ we defined before. The equilibrium condition $J(x; \hat{\mathcal{D}}) \geq J(x; D^{\varepsilon,d})+o(\varepsilon)$ is equivalent to $J(x; D^0) \geq J(x; D^{\varepsilon,d})+o(\varepsilon)$, which yields
\begin{equation*}
\int_0^\infty \phi'(d+ y(x-d, r)) (\mathcal{L}y(x-d, r) - ry(x-d, r)) dF_\rho(r) \leq 0.
\end{equation*}
\end{proof}
\begin{remark}
    The martingale condition \eqref{nec:martingale_cond} and the uniform integrability condition \eqref{nec:uicond} are similar to conditions (4.1), (4.2) in \cite{liang2024equilibria}, which are classical assumptions in necessity analysis. These two conditions play a crucial role in the proof. The first ensures that the stochastic integral vanishes upon taking the conditional expectation, while the second justifies the interchange of limits as $\varepsilon\rightarrow0$ and also ensures the second-order term in the Taylor expansion is of order $o(\varepsilon)$.
\end{remark}

\section{Barrier-Type Equilibrium}\label{sec:Barrier strategy in two examples}
In this section, we investigate the existence and non-existence of barrier-type equilibrium strategies. A singular dividend law $\hat{\mathcal{D}} = (\mathcal{NT}, \mathcal{P})$ is called barrier-type if there exists a constant $k^* > 0$ such that the no-transaction region is $\mathcal{NT} = (0, k^*)$ and the pay region is $\mathcal{P} = [k^*, \infty)$. Under such a law, we first show that, for any $r \in \text{Supp}(F_\rho)$, the expected discounted dividend $y(x, r)$ satisfies the following ODE:
 \begin{equation} \label{eq:general_ode}
 \left\{
 \begin{aligned}
     & \mathcal{L}y(x,r) - ry(x, r) = 0, \quad x \in [0, k^*),\\
     & y_x(x,r)=1,\quad x\in [k^*,\infty)
 \end{aligned}
 \right.
 \end{equation}
 with $y(0; r) = 0$ and the smooth-pasting condition $\lim\limits_{x\rightarrow k^*}y_x(x; r) = 1$. 
 \begin{theorem}\label{thm:conditionverify}
     The expected discounted dividend $y(x, r)$ satisfies \eqref{eq:general_ode} under a barrier-type dividend law. Moreover, any barrier-type dividend law is admissible and $y$ satisfies the integrability conditions  \eqref{nec:martingale_cond} and \eqref{nec:uicond}.
 \end{theorem}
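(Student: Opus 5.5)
Fix a barrier $k^*>0$ with $\mathcal{NT}=(0,k^*)$ and $\mathcal P=[k^*,\infty)$. The plan has three parts, taken in order: construct the dividend strategy explicitly, compute $y$ from a fundamental-solution representation, and then check the two integrability conditions.

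\textbf{Admissibility.} For $x\le k^*$ the Skorokhod problem \eqref{eq:skorokhod} is the classical one-sided reflection at the upper barrier $k^*$. Because $\mu,\sigma$ are Lipschitz (Assumption \ref{assump: parameter}), it has a unique strong solution, which we obtain from the Skorokhod map:
\[
D_s=\sup_{t\le u\le s}\big(Z_u-k^*\big)^+ ,
\]
where $Z$ solves the SDE with drift $\mu$ and volatility $\sigma$, and we run a standard Picard/fixed-point argument because the coefficients depend on $X$. For $x>k^*$ we add an initial jump $\Delta D_t=x-k^*\le x$ and then reflect from $k^*$. The process stays in $[0,k^*]$ up to ruin, so $X$ is bounded. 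Condition \ref{item inte1} in Definition \ref{def:clasadmis} then follows from
\[
\mathbb E\Big[\int_t^{\tau}e^{-r(s-t)}\,\md D_s\Big]\le (x-k^*)^+ + \mathbb E\Big[\int_t^\tau e^{-r(s-t)}|\mu(X_s)|\,\md s\Big] + \text{(martingale term)}.
\]
To get this, apply Itô to $e^{-r(s-t)}X_s$. Since $0\le X\le k^*$, the resulting bound is uniform in $r\ge \underline r$, giving roughly $x+\sup_{[0,k^*]}|\mu|/\underline r$. Condition \ref{item inte2} follows because $\phi$ is increasing and continuous, so $\phi(\mathbb E[Y])\le \phi(C)$.

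\textbf{ODE characterization.} For each $r$, let $\psi_r$ be the solution of $\mathcal L\psi-r\psi=0$ with $\psi(0)=0$ and $\psi'(0)=1$. This linear ODE has a unique $C^2$ solution because $\sigma>0$ and the coefficients are continuous. Set
\[
y(x,r)=\frac{\psi_r(x)}{\psi_r'(k^*)}\ \text{on }[0,k^*],\qquad y(x,r)=x-k^*+y(k^*,r)\ \text{for }x>k^*.
\]
Three facts about $\psi_r$ are needed:
\begin{itemize}
\item[(a)] $\psi_r'>0$, so that the ratio is well defined. Suppose $\psi_r'$ had a first zero $x_0$. At $x_0$ we have $\psi_r>0$, so the ODE gives $\psi_r''(x_0)=2r\psi_r(x_0)/\sigma^2(x_0)>0$. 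But $\psi_r'$ decreases to $0$ at $x_0$, forcing $\psi_r''(x_0)\le0$, a contradiction.
\item[(b)] The function $y$ is $C^1$, is $C^2$ away from $k^*$, and satisfies $y(0,r)=0$ and smooth pasting $y_x(k^*-,r)=1$, all by construction.
\item[(c)] $y$ equals the expected discounted dividend. Apply the generalized Itô formula as in Step one of the proof of Theorem \ref{thm:verification}; this uses $\mathcal Ly-ry=0$ on $(0,k^*)$ and $y_x=1$ where $D$ increases. The stochastic integral is a true martingale because $y_x$ and $\sigma$ are bounded on $[0,k^*]$. The terminal term vanishes because $y$ is bounded on $[0,k^*]$ and $e^{-rk}\to0$.
\end{itemize}
Together these show that $y$ solves \eqref{eq:general_ode}, with $y_x=1$ on $[k^*,\infty)$ coming from the initial jump.

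\textbf{Integrability conditions.} Fix $x$ and a perturbation $\delta$. The perturbed surplus $X^\varepsilon$ satisfies the moment bounds of Lemma \ref{lma:deviationestimation}. Since $y_x$ is bounded by $\max(1,\sup_{[0,k^*]}y_x)$ and $\sigma$ has linear growth, condition \eqref{nec:martingale_cond} follows from the $L^2$ moment bound.

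For \eqref{nec:uicond}, first bound $|\mathcal Ly-ry|(z)$:
\begin{itemize}
\item On $[0,k^*)$ it vanishes.
\item On $(k^*,\infty)$ it equals $|\mu(z)-r(z-k^*+y(k^*,r))|\le C(1+z)$, uniformly in $r\le\bar r$.
\end{itemize}
Since $\frac1\varepsilon\big|\int_0^\varepsilon(\cdot)\,\md s\big|\le \sup_{s\le\varepsilon_0}C(1+|X^\varepsilon_s|)$, and the latter is integrable by \eqref{EX}, the supremum over $\varepsilon$ is integrable.

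The main obstacle I expect is the admissibility step with state-dependent coefficients. Well-posedness of the reflected SDE, and the behaviour at ruin (the process is killed at $0$ while reflected at $k^*$), need some care. I would handle this by citing standard results for reflected SDEs with Lipschitz coefficients on $(-\infty,k^*]$ and then stopping at $\tau$.
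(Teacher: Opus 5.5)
Your proposal is correct and follows essentially the same route as the paper. The paper likewise establishes well-posedness of the one-sided reflected SDE, though it simply cites Tanaka's theorem for convex domains. It then identifies $y$ with the expected discounted dividend via the generalized It\^o argument of Step one of Theorem \ref{thm:verification}, and obtains the integrability conditions from the boundedness of $y_x$, the linear growth of $\sigma$, and the bound $|\mathcal{L}y-ry|\le C(1+|x|)$ on $[k^*,\infty)$.

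Your two additions are valid and slightly tighten the paper's argument. First, you prove $\psi_r'>0$, so that $y=\psi_r/\psi_r'(k^*)$ is well defined; the paper takes this for granted. Second, you bound the expected discounted dividends directly by $x+\sup_{[0,k^*]}|\mu|/\underline r$ via It\^o applied to $e^{-r(s-t)}X_s$, rather than appealing to continuity of $y$ in $r$.
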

 \begin{proof}
    Assume that $\mathcal{NT}=(0,k^*)$ for a fixed $k^*>0$. Using Theorem 4.1 in \cite{tanaka1979stochastic}, there exists a unique $\mathbb{F}$-adapted solution of the Skorokhod equation \eqref{eq:skorokhod} as $\mu$ and $\sigma$ satisfy Assumption \ref{assump: parameter} and $\mathcal{NT}$ and $\mathcal{P}$ are convex under a barrier-type dividend law. Let $y(x, r)$ be the solution to the ODE \eqref{eq:general_ode}. By construction, $y(\cdot, r)$ is $C^2$ on the no-transaction region $(0, k^*)$ and satisfies the smooth-pasting condition $\lim\limits_{x \to k^*} y_x(x; r) = 1$, which ensures that $y(\cdot, r)$ is $C^1$ across the boundary $k^*$. As $y_x(x, r) = 1$ for all $x \geq k^*$, the first derivative $y_x$ is continuous and globally bounded on $\mathbb{R}_+$. Given the linear growth condition of $\sigma(x)$ in Assumption \ref{assump: parameter}, we have
     $$
     \mathbb{E}_{t,x}\left[\int_t^{\tau_k} e^{-2r(s-t)}\sigma^2(X_s^{t,x,\hat{D}})y_x^2(X_s^{t,x,\hat{D}}, r)\md s\right]\le C\mE_{t,x}\left[\int_t^{\tau_k} (1+|X_s^{t,x,\hat{D}}|^2)\md s\right]<\infty,
     $$
    which guarantees Condition \ref{assump: verimartingale} of Theorem \ref{thm:verification}. Under the barrier strategy $\hat{\mathcal{D}}$, the reflected surplus process $X^{\hat{\mathcal{D}}}$ is confined to the compact interval $[0, k^*]$, where $y(x, r)$ is continuous and thus uniformly bounded. This boundedness ensures that 
\[
\lim_{k \to \infty} \mathbb{E}_{t,x} \left[ e^{-r(k-t)} y(X_{\tau_k}^{\hat{\mathcal{D}}}, r) \right] = 0
\]
is satisfied, fulfilling Condition \ref{assump: trans} of Theorem \ref{thm:verification}. Using the same argument as in the proof of Theorem \ref{thm:verification}, we can show that $y$ is exactly the expected discounted dividends under $\hat{\mathcal{D}}$.  Conditions \ref{item inte1} and \ref{item inte2} in Definition \ref{def:clasadmis} can also be easily verified as we have proved the expected discounted dividends is exactly $y(x,r)$ and $y(x,r)=\mE_x\left[\int_0^{\tau^{\hat D}}e^{-rs} \md \hat{D}_s\right]$ is continuous in $r$ with $r\in[\underline{r},\overline{r}]$. Thus, any barrier-type dividend law is admissible.

 Also by the global boundedness of $y_x$ and the linear growth of $\sigma$, we can verify that 
 $$
\mathbb{E}_x\left[\int_0^{\varepsilon_0} e^{-2rs}\sigma^2(X^{\varepsilon}_s)y_x^2(X^{\varepsilon}_s, r)\md s\right]\le C\mE_x\left[\int_0^{\varepsilon} (1+|X_s^{\varepsilon}|^2)\md s\right]<\infty,
 $$
 for any perturbed process $X^\varepsilon$ and the martingale condition \eqref{nec:martingale_cond} holds. Finally, for Condition \eqref{nec:uicond}, we note that the term $\mathcal{L}y(x, r) - ry(x, r)$ vanishes for $x \in (0, k^*)$ and, for $x \geq k^*$, takes the form
\[
(\mathcal{L}y - ry)(x, r) = \mu(x) - r(x - k^* + y(k^*, r)).
\]
Because both $\mu(x)$ and $y(x, r)$ exhibit at most linear growth,  $|\mathcal{L}y - ry| \leq C(1+|x|)$. Consequently, the uniform integrability requirement is satisfied as
\[
\mathbb{E}_x \left[ \sup_{\varepsilon \in (0, \varepsilon_0)} \frac{1}{\varepsilon} \left| \int_0^{\varepsilon} e^{-rs} (\mathcal{L}y - ry)(X_s^\varepsilon, r) \md s \right| \right] \leq \mathbb{E}_x \left[ \sup_{s \in [0, \varepsilon_0]} C(1 + |X_s^\varepsilon|) \right] < \infty,
\]
where the last inequality follows from standard SDE estimates for the supremum of the surplus process over a small time interval.
 \end{proof}
 
By Theorem \ref{thm:conditionverify}, we can use the necessary conditions derived in Theorem \ref{thm:nes}, based on which we next provide a necessary characterization of the equilibrium barrier point $k^*$ for the surplus process  \eqref{eq:surplus}.

\begin{theorem}\label{thm: barrrier point}
    Suppose that $\hat{\mathcal{D}}$ is a barrier-type equilibrium law with barrier $k^* > 0$. Under the assumptions of Theorem \ref{thm:nes}, the barrier $k^*$ must satisfy the following aggregate smooth-pasting condition
 \begin{equation} \label{eq:aggregate_smooth_pasting}
G(k^*):=\int_0^\infty \phi'(y(k^*, r)) \left( \mu(k^*) - r y(k^*, r) \right) \md F_\rho(r) = 0.
\end{equation}
\end{theorem}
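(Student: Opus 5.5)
We will derive \eqref{eq:aggregate_smooth_pasting} by evaluating the variational inequality \eqref{variational inequality nes} of Theorem \ref{thm:nes} on both sides of the barrier and using continuity at $k^*$. By Theorem \ref{thm:conditionverify}, $y(\cdot,r)$ solves \eqref{eq:general_ode} and the integrability conditions \eqref{nec:martingale_cond}--\eqref{nec:uicond} hold. Hence Theorem \ref{thm:nes} applies, and \eqref{variational inequality nes} holds at every $x>0$.

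\textbf{Step 1 (right of the barrier).} For $x>k^*$ we have $x\in\mathcal{P}$, so $y_x(x,r)=1$ and $y(x,r)=y(k^*,r)+x-k^*$. Thus $\mathcal{L}y(x,r)-ry(x,r)=\mu(x)-r\bigl(y(k^*,r)+x-k^*\bigr)$. The first entry of the max in \eqref{variational inequality nes} vanishes, so the second entry must be nonpositive:
\[
\int_0^\infty \phi'\bigl(y(x,r)\bigr)\Bigl(\mu(x)-r\bigl(y(k^*,r)+x-k^*\bigr)\Bigr)\md F_\rho(r)\le 0,\qquad x>k^*.
\]
We let $x\downarrow k^*$. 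The integrand converges pointwise by continuity of $\mu$, $\phi'$ and $y(\cdot,r)$. It is dominated uniformly for $r\in[\underline r,\bar r]$ and $x$ near $k^*$, because $\phi'$ is continuous and positive on the compact range of $y$ there, and $y(k^*,r)$ is bounded in $r$ (it is nonincreasing in $r$ and finite at $\underline r$). Dominated convergence then gives $G(k^*)\le 0$.

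\textbf{Step 2 (left of the barrier).} For $x\in(0,k^*)$ we have $\mathcal{L}y-ry=0$, so the second entry of the max vanishes and \eqref{variational inequality nes} gives $\int_0^\infty\phi'(y(x,r))(1-y_x(x,r))\md F_\rho(r)\le 0$. This entry does not directly produce $G$. We therefore instead use the one-sided second derivative. Since $y_x(\cdot,r)\to 1$ as $x\uparrow k^*$, the ODE yields
\[
\tfrac12\sigma^2(k^*)\,y_{xx}(k^*-,r)=r\,y(k^*,r)-\mu(k^*).
\]
Hence $G(k^*)=-\tfrac12\sigma^2(k^*)\int_0^\infty\phi'(y(k^*,r))\,y_{xx}(k^*-,r)\md F_\rho(r)$. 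Let $H(x):=\int_0^\infty\phi'(y(x,r))(1-y_x(x,r))\md F_\rho(r)$. Then $H\le 0$ on $(0,k^*)$ and $H(k^*)=0$. Differentiating under the integral, which is justified by the uniform bounds on $y$, $y_x$ and $y_{xx}$ on $[k^*/2,k^*)$ obtained from continuous dependence of the ODE solution on $r\in[\underline r,\bar r]$, gives the left derivative
\[
H'(k^*-)=-\int_0^\infty\phi'(y(k^*,r))\,y_{xx}(k^*-,r)\md F_\rho(r),
\]
because the term involving $\phi''$ is multiplied by $1-y_x(k^*,r)=0$. Since $H\le0$ to the left of $k^*$ and $H(k^*)=0$, we get $H'(k^*-)\ge 0$. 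This gives $G(k^*)=\tfrac12\sigma^2(k^*)H'(k^*-)\ge 0$.

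Combining Steps 1 and 2 yields $G(k^*)=0$. The main obstacle is Step 2. It requires that $y_{xx}(k^*-,r)$ exist uniformly in $r$ and that we may differentiate $H$ under the integral sign. We would justify this from the explicit two-fundamental-solution representation of $y(\cdot,r)$ on $[0,k^*]$ with $y(0,r)=0$ and $y_x(k^*,r)=1$, whose coefficients are continuous in $r$ on the compact support.
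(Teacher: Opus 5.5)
Your proposal is correct and follows essentially the same route as the paper. You get $G(k^*)\le 0$ from the variational inequality on the pay region, and $G(k^*)\ge 0$ by differentiating $H\le 0$ from the left at $k^*$, where $H(k^*)=0$, and substituting the ODE for $y_{xx}(k^*-,r)$. The only difference is cosmetic: you obtain the first inequality by letting $x\downarrow k^*$ inside $\mathcal{P}$, whereas the paper evaluates at $k^*$ directly using the right second derivative. Your version slightly tidies up the ambiguity of $\mathcal{L}y$ at the kink.
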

\begin{proof}
First, by Theorem \ref{thm:nes}, 
\begin{equation}\label{eq:barrier1}
\int_0^\infty \phi'(y(k^*, r)) \left( \mu(k^*) - r y(k^*, r) \right) \md F_\rho(r) \leq 0,
\end{equation}
where we have just  used \( y_x(k^*, r) = 1 \) and \( y_{xx} = 0 \) on \( \mathcal P \). Also by  Theorem \ref{thm:nes}, 
\begin{equation}\label{eq:barrierauxil}
    \int_0^\infty \phi'(y(x, r)) (1 - y_x(x, r)) \md F_\rho(r) \le 0,\quad\forall x\in(0,k^*).
\end{equation}
As
\[
\int_0^\infty \phi'(y(k^*; r)) (1 - y_x(k^*; r)) \md F_\rho(r) = 0
,\]
 we differentiate \eqref{eq:barrierauxil} with respect to \( x \) at $k^*$, which requires
\[
\left. \int_0^\infty \left[ \phi''(y) \cdot y_x (1 - y_x) + \phi'(y) \cdot (-y_{xx}) \right] \md F_\rho(r) \right|_{x=k^*-} \geq 0,
\]
that is,
\[
\left. \int_0^\infty \phi'(y) y_{xx} \md F_\rho(r) \right|_{x=k^*-} \leq 0.
\]
By \( \mathcal{L}y - r y = 0 \) in $\mathcal{NT}$, \( y_{xx}(k^*, r) = \frac{2}{\sigma^2(k^*)} (r y(k^*, r) - \mu(k^*)) \). Substituting this, we obtain
\begin{equation}\label{eq: barrier2}
\int_0^\infty \phi'(y(k^*, r)) \cdot (\mu(k^*) - r y(k^*, r)) \md F_\rho(r) \geq 0.
\end{equation}
Combining \eqref{eq:barrier1} and \eqref{eq: barrier2}, we obtain \eqref{eq:aggregate_smooth_pasting}.
\end{proof}
\begin{remark} \label{rem:aggregate_smooth_pasting}
We refer to \eqref{eq:aggregate_smooth_pasting} as the aggregate smooth-pasting condition for the following reason. The $C^2$ regularity of the equilibrium value function $V(x)$ at the barrier $k^*$ is equivalent to the matching of its second-order derivatives from both sides, i.e., $V''(k^*-) = V''(k^*+)$. Differentiating $V(x) = \int_0^\infty \phi(y(x,r)) \md F_\rho(r)$ twice, we have
\begin{equation*}
    V''(x) = \int_0^\infty \left[ \phi''(y(x,r)) y_x^2(x,r) + \phi'(y(x,r)) y_{xx}(x,r) \right] \md F_\rho(r).
\end{equation*}
As $y(\cdot, r) \in C^1(\mathbb{R}_+)$ and $y_x(k^*, r) = 1$, the first term in the integrand is continuous at $k^*$. Thus, $V''(k^*-) = V''(k^*+)$ if and only if the aggregate second-order term $\int_0^\infty \phi'(y(x,r)) y_{xx}(x,r) \md F_\rho(r)$ is continuous at $k^*$. In the pay region $\mathcal{P} = [k^*, \infty)$, we have $y(x,r) = x - k^* + y(k^*, r)$, which implies $y_{xx}(x,r) = 0$ for $x > k^*$. On the other hand, in the no-transaction region $\mathcal{NT}$, the relation $\mathcal{L}y - ry = 0$ implies that $y_{xx}(k^*-, r) = \frac{2}{\sigma^2(k^*)} (ry(k^*, r) - \mu(k^*))$. Therefore, the requirement that the left limit of the aggregate term vanishes, i.e.,
\begin{equation*}
    \lim_{x \uparrow k^*} \int_0^\infty \phi'(y(x,r)) y_{xx}(x,r) \md F_\rho(r) = 0
\end{equation*}
is precisely equivalent to the condition $\int_0^\infty \phi'(y(k^*, r)) (\mu(k^*) - ry(k^*, r)) \md F_\rho(r) = 0$ as stated in \eqref{eq:aggregate_smooth_pasting}.
\end{remark}
\subsection{Non-existence for a Class of Aggregation Functions}\label{sec:non-exist}
Throughout this subsection, we assume that $\operatorname{Supp}(F_{\rho})$ is not a singleton (i.e., $F_{\rho}$ is non-degenerate). Under this assumption, we prove that a barrier-type equilibrium dividend law does not exist for a class of ambiguity aggregation functions, including power and logarithmic functions. We first recall the definition of long-tailed functions.
\begin{definition}
    A positive measurable function $f:\mR_+\rightarrow\mR_+$ is said to be  long-tailed if for every fixed $y \in \mathbb{R}$, it satisfies:
\begin{equation}
    \lim_{x \to \infty} \frac{f(x + y)}{f(x)} = 1.
\end{equation}
\end{definition}
\noindent 
Define the class $\mathcal{S}$ by
$$
\mathcal{S}=\left\{\phi : \phi \,\text{is strictly concave and satisfies Assumption  \ref{phiassump}}, \phi^{\prime}\, \text{is long-tailed}\right\}.
$$
The power-type form $\phi(x)=x^{1-\gamma}$ with $\gamma\in(0,1)$ and the logarithmic form $\phi(x)=\log x$ both belong to $\mathcal{S}$. We first examine the monotonicity of $y$ and $ry$ with respect to $r$.
\begin{lemma}\label{lma:mono}
For a fixed barrier $k^* > 0$, the expected discounted dividend $y(k^*, r)$ is strictly decreasing in $r$, while  $ry(k^*, r)$ is strictly increasing in $r$.
\end{lemma}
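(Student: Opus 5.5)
The plan is to reduce both monotonicity claims to comparison arguments for first-order Riccati equations, using the explicit structure of $y$ under a barrier law given by Theorem \ref{thm:conditionverify}. For fixed $r\in\operatorname{Supp}(F_\rho)$, let $\psi_r$ be the solution of $\mathcal{L}\psi-r\psi=0$ on $[0,k^*]$ with $\psi_r(0)=0$ and $\psi_r'(0)=1$.

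\textbf{Step 1: representation of $y$.} By \eqref{eq:general_ode}, the boundary condition $y(0,r)=0$ and the smooth-pasting condition $y_x(k^*-,r)=1$, we have
\[
y(x,r)=\frac{\psi_r(x)}{\psi_r'(k^*)},\qquad x\in[0,k^*].
\]
For this to make sense we need $\psi_r'>0$ on $[0,k^*]$, which follows from a maximum-principle argument. Suppose $x_0$ is the first point where $\psi_r'(x_0)=0$. Then $\psi_r(x_0)>0$, so the ODE gives $\psi_r''(x_0)=2r\psi_r(x_0)/\sigma^2(x_0)>0$. This contradicts $\psi_r'$ decreasing to $0$ from positive values at $x_0$. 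Hence $\psi_r>0$ and $\psi_r'>0$ on $(0,k^*]$.

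\textbf{Step 2: the Riccati equations.} Set $q_r(x):=\psi_r(x)/\psi_r'(x)=y(x,r)/y_x(x,r)$. Then $q_r(k^*)=y(k^*,r)$ and $q_r(0)=0$. Using $\psi_r''/\psi_r'=\frac{2}{\sigma^2}(rq_r-\mu)$, which comes from the ODE, we obtain
\[
q_r'(x)=1-\frac{2q_r(x)}{\sigma^2(x)}\bigl(r\,q_r(x)-\mu(x)\bigr),\qquad q_r(0)=0 .
\]
Multiplying by $r$, the function $p_r:=r q_r$ satisfies
\[
p_r'(x)=r-\frac{2p_r(x)}{\sigma^2(x)}\bigl(p_r(x)-\mu(x)\bigr),\qquad p_r(0)=0,
\]
and $p_r(k^*)=r\,y(k^*,r)$. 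Both right-hand sides are locally Lipschitz in the unknown, so uniqueness holds.

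\textbf{Step 3: strict comparison.} Fix $r_1<r_2$. For $q$, the right-hand side $1-\frac{2q}{\sigma^2}(rq-\mu)$ is strictly decreasing in $r$ whenever $q>0$, and $q_r>0$ on $(0,k^*]$ by Step 1. For $p$, the right-hand side is strictly increasing in $r$ for every $p$.

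First take $p$. We have $p_{r_2}'(0)=r_2>r_1=p_{r_1}'(0)$, so $p_{r_2}>p_{r_1}$ on some interval $(0,\eta)$. If there were a first crossing point $x_1\in(0,k^*]$, then $p_{r_1}(x_1)=p_{r_2}(x_1)$ would force $p_{r_2}'(x_1)>p_{r_1}'(x_1)$, contradicting the fact that $p_{r_2}-p_{r_1}$ decreases to $0$ at $x_1$. Hence $r y(k^*,r)$ is strictly increasing in $r$.

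For $q$, both solutions have slope $1$ at $0$, so the ordering near $0$ must come from second-order information. One option is the expansion
\[
q_r(x)=x+\frac{\mu(0)}{\sigma^2(0)}x^2+O(x^3),
\]
in which the $r$-dependence enters only at third order and with a negative sign. The cleaner option is to write $q_r(x)=\int_0^x q_r'$ and use a Grönwall-type comparison on $w:=q_{r_1}-q_{r_2}$. This $w$ satisfies a linear ODE $w'=-a(x)w+\frac{2(r_2-r_1)}{\sigma^2}q_{r_2}^2\cdot(\text{positive factor})$ with $w(0)=0$ and a strictly positive forcing on $(0,k^*]$. Variation of constants then gives $w>0$ on $(0,k^*]$, so $y(k^*,r)$ is strictly decreasing in $r$.

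As a sanity check, the decrease of $y$ also follows probabilistically: $e^{-rs}$ is strictly decreasing in $r$, and starting from $k^*$ the reflection term $\hat D$ is nonzero with positive probability.

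\textbf{Main obstacle.} The delicate point is strictness at the degenerate initial point $x=0$, where the competing solutions coincide to first order. This is why I would use the linear-ODE/variation-of-constants form for $w$ rather than a naive first-crossing argument. I expect the same device to settle the $p$ case as well if needed.
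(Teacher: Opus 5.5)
Your proof is correct. For the monotonicity of $y(k^*,r)$ it takes a different route from the paper.

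The paper works with the second-order equation directly. It sets $w=y(\cdot,r_1)-y(\cdot,r_2)$ and observes $\tfrac12\sigma^2w_{xx}+\mu w_x-r_1w=-(r_2-r_1)y(\cdot,r_2)<0$, with the two-point data $w(0)=0$, $w_x(k^*)=0$. It then runs a maximum-principle argument, ruling out a negative interior minimum and a negative minimum at $k^*$ (using $w_{xx}(k^*-)\ge 0$). This yields only $w\ge 0$, and strictness is left as ``easy to verify''.

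You instead pass to $q_r=y/y_x=\psi_r/\psi_r'$, which absorbs the normalization $y_x(k^*,r)=1$ and turns the boundary value problem into an initial value problem. Then $w=q_{r_1}-q_{r_2}$ solves $w'=-a\,w+\frac{2(r_2-r_1)}{\sigma^2}q_{r_2}^2$ with $a=\frac{2}{\sigma^2}\bigl(r_1(q_{r_1}+q_{r_2})-\mu\bigr)$. Your ``positive factor'' is just $1$, and you should write this decomposition out explicitly. Variation of constants then gives strict positivity immediately. In fact it gives it at every barrier level $x\in(0,k^*]$, since $q_r(x)$ is the value at the barrier when the barrier sits at $x$. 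The paper's argument instead orders the whole profile $y(\cdot,r)$ for the fixed barrier $k^*$.

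For $ry(k^*,r)$, you and the paper use the same Riccati equation for $p_r=rq_r$ (the paper's $g$). The paper differentiates in $r$ and solves $v_x=1+\frac{2\mu-4g}{\sigma^2}v$, $v(0)=0$, obtaining $\partial_r(ry)>0$. Your first-crossing comparison needs no differentiability in the parameter. Your closing worry is unnecessary: since $p_{r_2}'(0)-p_{r_1}'(0)=r_2-r_1>0$, the crossing argument is already complete.

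Your Step 1 also supplies facts the paper uses without proof. It shows $\psi_x(k^*,r)>0$, so the representation $y=\psi/\psi_x(k^*,r)$ makes sense. It also shows $y(\cdot,r_2)>0$ on $(0,k^*]$, which the paper needs for the sign of the forcing term in its maximum-principle step.
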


\begin{proof}
Let $\psi(x, r)$ be the unique solution to the initial value problem $\mathcal{L}\psi - r\psi = 0$ with $\psi(0, r) = 0$ and $\psi_x(0, r) = 1$. By the linearity of the equation, any solution satisfying $y(0, r) = 0$ is of the form $y(x, r) = C(r)\psi(x, r)$. Applying the boundary condition $y_x(k^*, r) = 1$, we have
$
    y(x, r) = \frac{\psi(x, r)}{\psi_x(k^*, r)},
$
based on which we  respectively prove  monotonicity of $y(k^*, r)$ and monotonicity of $ry(k^*, r)$ as follows.
\vskip 5pt \noindent
\textbf{1. Monotonicity of $y(k^*, r)$.} \ 
Let $r_2 > r_1 > 0$. Define $y_1(x) = y(x, r_1)$, $y_2(x) = y(x, r_2)$, and $w(x) = y_1(x) - y_2(x)$. Then  the function $w$ satisfies $w(0) = 0$, $w_x(k^*) = 0$, and the differential inequality:
\begin{equation*}
    \frac{1}{2} \sigma^2 w_{xx} + \mu w_x - r_1 w = -(r_2 - r_1) y_2 < 0,
\end{equation*}
where $y_2 > 0$ on $(0, k^*]$. Suppose that $w(x)$ attains a negative minimum at $x_0 \in (0, k^*]$. 
\begin{itemize}
    \item If $x_0 \in (0, k^*)$, then $w_x(x_0) = 0$ and $w_{xx}(x_0) \ge 0$. Substituting into the equation gives $-r_1 w(x_0) < 0$, which implies $w(x_0) > 0$, a contradiction.
    \item If $x_0 = k^*$, then $w_x(k^*) = 0$ and $w_{xx}(k^*) \ge 0$ (as a limit from the left). Substituting into the equation at $x=k^*$ yields $\frac{1}{2}\sigma^2(k^*)w_{xx}(k^*) - r_1 w(k^*) < 0$. As $w_{xx} \ge 0$ and $w < 0$, the left side is strictly positive, leading to a contradiction.
\end{itemize}
Thus, $w(x) \ge 0$ on $[0, k^*]$, implying $y(k^*, r_1) \ge y(k^*, r_2)$. Moreover, it is easy to verify that $y(k^*, r_1) = y(k^*, r_2)$ is impossible.
\vskip 5pt\noindent
\textbf{2. Monotonicity of $r y(k^*, r)$.} \ 
Define $g(x, r) = \frac{r \psi(x, r)}{\psi_x(x, r)}$, then $g(k^*, r) = r y(k^*, r)$. Differentiating $g$ with respect to $x$ and substituting $\psi_{xx} = \frac{2}{\sigma^2}(r\psi - \mu\psi_x)$, we obtain 
\begin{equation*}
    g_x = r + \frac{2\mu}{\sigma^2} g - \frac{2}{\sigma^2} g^2.
\end{equation*}
Differentiating this equation with respect to $r$ and setting $v = \frac{\partial g}{\partial r}$ yield
\begin{equation*}
    v_x = 1 + \left( \frac{2\mu - 4g}{\sigma^2} \right) v, \quad v(0, r) = 0.
\end{equation*}
Thus the solution  $v(x, r)$ satisfies
\begin{equation*}
    v(x, r) = \int_0^x \exp \left( \int_\xi^x \frac{2\mu(z) - 4g(z, r)}{\sigma^2(z)} dz \right) d\xi> 0,\quad  \forall \ x>0.
\end{equation*}
\end{proof}

\begin{proposition}\label{prop:neutral}
Let $k^*$ be the equilibrium barrier point for the dividend problem with an ambiguity aggregation function $\phi\in\mathcal{S}$. Let $G_{RN}(k):= \int_0^\infty (\mu(k) - ry(k, r)) \md F_\rho(r)$ be the aggregate payoff function for an ambiguity-neutral central planner. Then $G_{RN}(k^*) > 0$.
\end{proposition}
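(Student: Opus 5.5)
Write $y(r):=y(k^*,r)$ and $h(r):=\mu(k^*)-r\,y(r)$. By Theorem \ref{thm: barrrier point}, the barrier satisfies the aggregate smooth-pasting condition
\[
\int_0^\infty \phi'(y(r))\,h(r)\,\md F_\rho(r)=0 .
\]
The plan is a covariance (Chebyshev-type) argument that compares the $\phi'$-weighted integral with the unweighted integral $G_{RN}(k^*)=\int_0^\infty h(r)\,\md F_\rho(r)$.

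By Lemma \ref{lma:mono}, $r\mapsto y(r)$ is strictly decreasing. Since $\phi\in\mathcal S$ is strictly concave, $\phi'$ is strictly decreasing, so $w(r):=\phi'(y(r))$ is strictly increasing in $r$ and strictly positive. Also by Lemma \ref{lma:mono}, $r\,y(r)$ is strictly increasing, so $h$ is strictly decreasing in $r$. Thus $w$ and $h$ are oppositely monotone on $\operatorname{Supp}(F_\rho)\subset[\underline r,\bar r]$, and both are bounded there (continuity in $r$ on a compact set). Chebyshev's association inequality then gives
\[
\int w h\,\md F_\rho \le \int w\,\md F_\rho\int h\,\md F_\rho .
\]
To make the inequality strict, write the difference as
\[
\tfrac12\iint (w(r)-w(s))(h(r)-h(s))\,\md F_\rho(r)\,\md F_\rho(s).
\]
The integrand is $\le 0$ everywhere, and it is $<0$ whenever $r\neq s$, by strict monotonicity of both functions. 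Because $F_\rho$ is non-degenerate, the product measure gives positive mass to $\{r\neq s\}$, so the double integral is strictly negative. Combining this with the smooth-pasting condition,
\[
0=\int w h\,\md F_\rho<\Big(\int w\,\md F_\rho\Big)\,G_{RN}(k^*).
\]
Since $\int w\,\md F_\rho>0$, it follows that $G_{RN}(k^*)>0$.

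The main delicate point is the strictness, which needs both strict monotonicity in Lemma \ref{lma:mono} and strict concavity of $\phi$ (guaranteed by $\phi\in\mathcal S$). It also needs the observation that a non-singleton support forces $F_\rho\otimes F_\rho(\{r\neq s\})>0$: if that set had zero mass, the measure would be a point mass. The long-tail property of $\phi'$ is not needed here; presumably it enters only in the subsequent non-existence argument.

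One should also check that $y(r)>0$ lies in the domain where $\phi'$ is finite and positive. This holds because $y(k^*,r)=\psi(k^*,r)/\psi_x(k^*,r)>0$, as in the proof of Lemma \ref{lma:mono}.
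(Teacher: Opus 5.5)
Your proof is correct and follows the paper's argument. The paper likewise applies Chebyshev's integral inequality to the strictly increasing weight $\phi'(y(k^*,r))$ and the strictly decreasing $\mu(k^*)-ry(k^*,r)$ (both monotonicities from Lemma \ref{lma:mono} and strict concavity of $\phi$), then combines the result with the aggregate smooth-pasting condition $G(k^*)=0$ and $\phi'>0$. Your double-integral justification of strict inequality via the non-degeneracy of $F_\rho$ supplies a detail that the paper leaves implicit.
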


\begin{proof}
Based on  the definition of the equilibrium threshold $k^*$, we have the equilibrium condition:
\begin{equation*}
\int_0^\infty \phi'(y(k^*, r)) [\mu(k^*) - ry(k^*, r)] \md F_\rho(r) = 0.
\end{equation*}
Define two functions of $r$ as follows
\begin{enumerate}
    \item $W(r) = \phi'(y(k^*, r))$. By Lemma \ref{lma:mono} , $y$ is strictly decreasing in $r$. As $\phi$ is strictly concave, $\phi'$ is a strictly decreasing function. Thus, $W(r)$ is a strictly increasing function of $r$.
    \item $V(r) = \mu(k^*) - ry(k^*, r)$. By Lemma \ref{lma:mono}, $ry$ is strictly increasing in $r$. Thus, $V(r)$ is a strictly decreasing function of $r$.
\end{enumerate}
Using Chebyshev's integral inequality, for a strictly increasing function $W(r)$ and a strictly decreasing function $V(r)$, we have
$
E[W(r)V(r)] < E[W(r)] E[V(r)].
$
Substituting the equilibrium condition $E[W(r)V(r)] = 0$,
\begin{equation*}
0 < \left( \int_0^\infty \phi'(y(k^*, r)) \md F_\rho(r) \right) \cdot \left( \int_0^\infty (\mu(k^*) - ry(k^*, r)) \md F_\rho(r) \right).
\end{equation*}
As $\phi$ is increasing, $\phi' > 0$, which implies the first integral is positive. Therefore, the second integral must also be positive, i.e.,
\begin{equation*}
G_{RN}(k^*) = \int_0^\infty (\mu(k^*) - ry(k^*, r)) \md F_\rho(r) > 0.
\end{equation*}
This completes the proof.
\end{proof}
\begin{corollary}
    For any aggregation function $\phi\in\mathcal{S}$, there is no barrier-type equilibrium.
\end{corollary}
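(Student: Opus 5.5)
We argue by contradiction, combining the third necessary condition \eqref{additional nes} of Theorem \ref{thm:nes} with Proposition \ref{prop:neutral}. Suppose $\phi\in\mathcal{S}$ and $\hat{\mathcal{D}}=((0,k^*),[k^*,\infty))$ is a barrier-type equilibrium. By Theorem \ref{thm:conditionverify}, the hypotheses of Theorem \ref{thm:nes} hold, so Theorem \ref{thm: barrrier point} and Proposition \ref{prop:neutral} apply. Hence $G_{RN}(k^*)=\int_0^\infty(\mu(k^*)-ry(k^*,r))\,\md F_\rho(r)>0$. The aim is to show that \eqref{additional nes}, used for arbitrarily large lump sums, forces $G_{RN}(k^*)\le 0$.

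\textbf{Step 1: the drift condition holds for every $d>0$.} Since $\mathcal{P}=[k^*,\infty)$, the interval $[k^*,k^*+d]$ lies in $\mathcal{P}$ for every $d>0$. Applying \eqref{additional nes} at $x=k^*$ requires $\mathcal{L}y(k^*,r)-ry(k^*,r)$. We use $y_x(k^*,r)=1$ and $y_{xx}=0$ on the pay side, as in the proof of Theorem \ref{thm: barrrier point}. This gives $\mathcal{L}y(k^*,r)-ry(k^*,r)=\mu(k^*)-ry(k^*,r)=:V(r)$, so
\[
\int_0^\infty \phi'\bigl(d+y(k^*,r)\bigr)\,V(r)\,\md F_\rho(r)\le 0\qquad\text{for all } d>0.
\]

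\textbf{Step 2: normalize and let $d\to\infty$.} First, $\phi'(d)>0$ for every $d$. Indeed $\phi'\ge 0$ since $\phi$ is increasing, and if $\phi'(d_0)=0$ then strict monotonicity of $\phi'$ would make it negative beyond $d_0$. Dividing the inequality of Step 1 by $\phi'(d)$ gives
\[
\int_0^\infty \frac{\phi'(d+y(k^*,r))}{\phi'(d)}\,V(r)\,\md F_\rho(r)\le 0 .
\]
By Lemma \ref{lma:mono} and the compact support $[\underline r,\bar r]$, $y(k^*,r)$ lies in $[y_{\min},y_{\max}]$ with $y_{\min}=y(k^*,\bar r)$ and $y_{\max}=y(k^*,\underline r)$. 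Because $\phi'$ is decreasing,
\[
\frac{\phi'(d+y_{\max})}{\phi'(d)}\le\frac{\phi'(d+y(k^*,r))}{\phi'(d)}\le\frac{\phi'(d+y_{\min})}{\phi'(d)} .
\]
Both bounds tend to $1$ as $d\to\infty$ by the long-tailed property. So the ratio converges to $1$ uniformly in $r$ and stays bounded by $1$ for $d$ large. Also $V$ is bounded on the support, since $ry(k^*,r)$ is monotone in $r$. Dominated convergence then yields $\int_0^\infty V(r)\,\md F_\rho(r)=G_{RN}(k^*)\le 0$, contradicting Proposition \ref{prop:neutral}. Hence no barrier-type equilibrium exists.

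The main obstacle is Step 1. One must make sure that \eqref{additional nes} may be evaluated at the boundary point $x=k^*$ itself, with the right-hand values $y_{xx}(k^*+,r)=0$. The condition was derived for $x\in\mathcal{P}$ with $[x,x+d]\subset\mathcal{P}$, which holds at $x=k^*$. If the boundary evaluation is questionable, I would instead apply it at $x=k^*+\eta$ and let $\eta\downarrow 0$, using the explicit affine form of $y$ on $\mathcal{P}$ and continuity of $\mu$. The passage to the limit in $d$ needs uniformity of the long-tail ratio, which is handled by the monotone sandwich above.
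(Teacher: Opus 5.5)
Your proof is correct and follows the paper's argument. You apply the aggregate drift condition \eqref{additional nes} at $x=k^*$ for every $d>0$, divide by $\phi'(d)$, and let $d\to\infty$ using the long-tail property of $\phi'$ to get $G_{RN}(k^*)\le 0$, which contradicts Proposition \ref{prop:neutral}. Your monotone sandwich $\phi'(d+y_{\max})\le\phi'(d+y(k^*,r))\le\phi'(d+y_{\min})$, together with the explicit dominated-convergence step, justifies the uniform convergence more cleanly than the paper's brief appeal to continuity of $y$ in $r$.
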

\begin{proof}
    By Theorem \ref{thm:nes}, the barrier point $k^*$ must satisfy 
$$
 \int_0^\infty \phi'(d + y(k^*, r)) (\mathcal{L}y(k^*, r) - ry(k^*, r)) \md F_\rho(r) \leq 0,
$$
for any $d>0$. Dividing by $\phi^{\prime}(d)$ and letting $d\rightarrow\infty$, we have $\frac{\phi'(d + y(k^*, r))}{\phi'(d)}\rightarrow 1$ uniformly in $r\in \operatorname{Supp}(F_{\rho})$ as $\phi\in\mathcal{S}$ and $y$ is continuous in $r$ with $\operatorname{Supp}(F_{\rho})\subset[\underline{r},\hat{r}]$, which leads to  a contradiction as $d\rightarrow\infty$ by Proposition \ref{prop:neutral}.
\end{proof}
\subsection{Existence of Barrier-type Equilibrium in Linear and Exponential cases}
Having established that barrier-type equilibria do not exist for the class $\mathcal{S}$ under general surplus dynamics, we now turn our attention to aggregation functions where such equilibria exists. To derive explicit barrier points and carry out further analysis, we  focus on the diffusion model with constant coefficients for the remainder of this section.  That is, $\mu(x)\equiv \mu>0, \sigma(x)\equiv\sigma>0$ and under a dividend strategy $D=\{D_t\}_{t\ge0}$, the firm's surplus process evolves as
\begin{equation*}
dX_t^D=\mu \md t+\sigma\md W_t-\md D_t,\qquad X_{0-}^D=x>0.
\end{equation*}
By \eqref{eq:general_ode}, for any fixed $r \in \text{Supp}(F_\rho)$, the function $y(x, r)$ satisfies the following ODE in the no-transaction region:
\begin{equation}\label{eq:ODE_y}
\frac{1}{2} \sigma^2 y_{xx}(x, r) + \mu y_x(x, r) - r y(x, r) = 0, \quad x \in [0, k^*)
\end{equation}
with the initial value $y(0,r)=0$. Define the characteristic roots as follows
$$
\lambda_{\pm}(r):=\frac{-\mu\pm\sqrt{\mu^2+2\sigma^2r}}{\sigma^2}.
$$
Then the general solution to \eqref{eq:ODE_y} is given by $y(x, r) = A(r)(e^{\lambda_+ x} - e^{\lambda_- x})$. To satisfy the $C^1$-regularity at $x = k^*$, we require $y_x(k^*, r) = 1$ for any $r\in\operatorname{Supp}(F_{\rho})$, which yields
\begin{equation*}
A(r) = \frac{1}{\lambda_+ e^{\lambda_+ k^*} - \lambda_- e^{\lambda_- k^*}}.
\end{equation*}

To determine the equilibrium barrier $k^*$, we analyze the function defining the aggregate smooth-pasting condition \eqref{eq:aggregate_smooth_pasting}:
\begin{equation}\label{eq:smoothpast}
G(k) = \int_0^\infty \phi'(h(k, r)) \left( \mu - r h(k, r) \right) \md F_\rho(r),
\end{equation}
where $h(k, r)$ is defined by \footnote{The function $h(k, r)$ represents the expected discounted dividend at the candidate barrier $k$. } 
\begin{eqnarray}\label{h}
h(k, r):=\frac{e^{\lambda_+(r) k} - e^{\lambda_-(r) k}}{\lambda_+(r) e^{\lambda_+(r) k} - \lambda_-(r) e^{\lambda_-(r) k}},
\end{eqnarray} which represents the expected discounted dividend at the barrier $k$. As $\lambda_+ > 0 > \lambda_-$, we examine the asymptotic behavior of $G(k)$ at the boundaries of $(0, \infty)$:

As $\lim\limits_{k \to 0} h(k, r) =0$, given that $\mu > 0$ and $\phi$ is an increasing concave function with $\phi'(x) > 0$ for any $x>0$ , we have
$
\lim\limits_{k \to 0}G(k)  > 0.
$
Otherwise,  $\lim\limits_{k \to \infty} y(k, r) = \frac{1}{\lambda_+(r)}$ and thus
$
\lim_{k \to \infty} \left( \mu - r y(k, r) \right) = \mu - \frac{\mu + \sqrt{\mu^2 + 2\sigma^2 r}}{2} = \frac{\mu - \sqrt{\mu^2 + 2\sigma^2 r}}{2} < 0.
$
As this expression is strictly negative for all $r \in \text{Supp}(F_\rho)$ and $\phi' > 0$, it follows that $\lim_{k \to \infty} G(k) < 0$. Thus there exists a $k^*$ such that $G(k^*)=0$ as $G$ is continuous in $k$.

In the remainder of this subsection, we analyze two examples that automatically satisfy Conditions \eqref{additional assump1} and \eqref{additional assump2}: one is the case of linear ambiguity aggregation, i.e., $\phi(x)=x$, and the other is the case of exponential ambiguity aggregation, i.e., $\phi(x)=-\frac{1}{\alpha} e^{-\alpha x}$ with a constant aversion $\alpha>0$. In the first case, the derivative of $\phi$ is a constant, while in the second case, the derivative of $\phi$ takes an exponential form, and the initial dividend $d$ can be factored out. 
\subsubsection{Linear Aggregation and Comparison with Bounded Rate Case}\label{sec:linear}
Under linear ambiguity aggregation, the central planner is ambiguity-neutral with respect to the discount rate heterogeneity with $\phi'(x)\equiv 1$. The equilibrium condition \eqref{eq:smoothpast} simplifies to
\begin{equation}\label{eq:nosmoothambiguitybarrier}
\int_0^\infty \left( \mu - r h(k^*,r) \right) \md F_\rho(r)=\int_0^\infty \left( \mu - r \frac{e^{\lambda_+(r) k^*} - e^{\lambda_-(r) k^*}}{\lambda_+(r) e^{\lambda_+(r) k^*} - \lambda_-(r) e^{\lambda_-(r) k^*}} \right) \md F_\rho(r) = 0.
\end{equation}
\begin{theorem}\label{thm:linear equilibrium}
    There exists a unique barrier-type equilibrium in the diffusion model with linear ambiguity aggregation $\phi(x)=x$.
\end{theorem}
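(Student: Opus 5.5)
By Theorem \ref{thm:nes}, Theorem \ref{thm:conditionverify} and Theorem \ref{thm: barrrier point}, the barrier of any barrier-type equilibrium must be a zero of the linear aggregate smooth-pasting function $G(k)=\int_0^\infty(\mu-rh(k,r))\,\md F_\rho(r)$ in \eqref{eq:nosmoothambiguitybarrier}. The plan is therefore in two parts: show that $G$ has exactly one zero $k^*$ (uniqueness), and then check via Theorem \ref{thm:verification} that the barrier law at this $k^*$ really is an equilibrium (existence).

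\textbf{Uniqueness of the zero.} Continuity of $G$, $\lim_{k\to0}G(k)>0$ and $\lim_{k\to\infty}G(k)<0$ are already established above, so it suffices to show that $G$ is strictly decreasing. For that I will show $k\mapsto rh(k,r)$ is strictly increasing for each $r$.
\begin{itemize}
\item Reuse the Riccati argument from the proof of Lemma \ref{lma:mono}. With $\psi(x,r)=e^{\lambda_+x}-e^{\lambda_-x}$ one has $h(k,r)=\psi(k,r)/\psi_x(k,r)$, so $g(k):=rh(k,r)$ satisfies
\[
g'(k)=r+\tfrac{2}{\sigma^2}g(\mu-g),\qquad g(0)=0 .
\]
\item The right-hand side is a downward parabola in $g$. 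Its positive root is $g_\infty=r/\lambda_+(r)=\tfrac{\mu+\sqrt{\mu^2+2\sigma^2 r}}{2}$, which is exactly $\lim_{k\to\infty}rh(k,r)$.
\item Since $g$ starts at $0<g_\infty$ and $g_\infty$ is an equilibrium of the autonomous ODE, uniqueness of ODE solutions keeps $g$ in $[0,g_\infty)$. On that interval $g'>0$, so $g$ is strictly increasing.
\end{itemize}
Integrating against $F_\rho$ gives $G$ strictly decreasing, hence a unique zero $k^*$.

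\textbf{Existence: verification of the barrier law at $k^*$.} Set $y(x,r)=\psi(x,r)/\psi_x(k^*,r)$ on $[0,k^*]$ and $y(x,r)=x-k^*+h(k^*,r)$ for $x\ge k^*$. Because $\phi'\equiv1$, conditions \eqref{additional assump1}--\eqref{additional assump2} coincide with the two halves of \eqref{variational inequality}. Regularity, admissibility, the martingale condition and the transversality condition follow from Theorem \ref{thm:conditionverify}. The growth bound \eqref{condition: growth} holds because $y$ is bounded on $[0,k^*]$ and linear beyond, uniformly in $r\in[\underline r,\bar r]$. Monotonicity of $y$ in $x$ is clear since $y_x>0$. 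Monotonicity in $r$ needs the maximum-principle argument of Lemma \ref{lma:mono}, which applies for every $x\le k^*$; for $x\ge k^*$ it follows from monotonicity of $h(k^*,\cdot)$. The real work is the variational inequality, checked separately on each region.
\begin{itemize}
\item \emph{Pay region $x\ge k^*$.} Here $\int(\mathcal Ly-ry)\,\md F_\rho=\int\big(\mu-r(x-k^*+h(k^*,r))\big)\md F_\rho(r)$. This is decreasing in $x$ and equals $G(k^*)=0$ at $k^*$, so it is $\le0$. Also $1-y_x=0$ there.
\item \emph{No-transaction region $x\in(0,k^*)$.} Here $\mathcal Ly-ry=0$, so I need $\int y_x(x,r)\,\md F_\rho\ge1$. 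Each $y_x(\cdot,r)$ is proportional to $\lambda_+e^{\lambda_+x}-\lambda_-e^{\lambda_-x}$, whose second derivative $\lambda_+^3e^{\lambda_+x}-\lambda_-^3e^{\lambda_-x}$ is positive. Hence $Y(x):=\int y_x(x,r)\,\md F_\rho(r)$ is convex on $[0,k^*]$.
\end{itemize}

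For the no-transaction region, note $Y(k^*)=1$ and
\[
Y'(k^*-)=\int y_{xx}(k^*-,r)\,\md F_\rho=\tfrac{2}{\sigma^2}\int\big(rh(k^*,r)-\mu\big)\md F_\rho=-\tfrac{2}{\sigma^2}G(k^*)=0 .
\]
A convex function with zero slope at the right endpoint attains its minimum there, so $Y(x)\ge1$ on $(0,k^*)$. Therefore the maximum in \eqref{variational inequality} equals $0$ everywhere, and Theorem \ref{thm:verification} yields existence. I expect this no-transaction-region inequality to be the main obstacle. The individual $y_x(\cdot,r)$ may dip below $1$ near $k^*$, so the argument must work with the aggregate, combining convexity with the aggregate smooth-pasting identity $G(k^*)=0$.
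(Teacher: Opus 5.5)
Your proof is correct and follows the paper's argument. $G$ is shown to be strictly decreasing, so it has a unique root, and the barrier law at $k^*$ is then verified through Theorem~\ref{thm:verification}. Your convexity of $Y=\int y_x\,\md F_\rho$ with $Y'(k^*-)=0$ is exactly the paper's concavity of $H=1-Y$ with $H'(k^*)=0$.

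The one difference is in how you show $\partial_k(rh)>0$. You use the Riccati equation of Lemma~\ref{lma:mono} with a phase-line argument, whereas the paper computes $\partial_k h=(\lambda_+-\lambda_-)^2e^{(\lambda_++\lambda_-)k}/(\lambda_+e^{\lambda_+k}-\lambda_-e^{\lambda_-k})^2>0$ directly.

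One small point should be stated explicitly. Since $y_{xxx}>0$, $Y$ is strictly convex, so $Y>1$ strictly on $(0,k^*)$. This strictness is what makes the zero set defining $\mathcal P$ in Theorem~\ref{thm:verification} coincide with $[k^*,\infty)$.
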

\begin{proof}
    We first calculate the derivative of $h(k, r) = \frac{e^{\lambda_+ k} - e^{\lambda_- k}}{\lambda_+ e^{\lambda_+ k} - \lambda_- e^{\lambda_- k}}$ with respect to $k$,
\begin{eqnarray}
\frac{\partial h}{\partial k} &=& \frac{(\lambda_+ e^{\lambda_+ k} - \lambda_- e^{\lambda_- k})^2 - (e^{\lambda_+ k} - e^{\lambda_- k})(\lambda_+^2 e^{\lambda_+ k} - \lambda_-^2 e^{\lambda_- k})}{(\lambda_+ e^{\lambda_+ k} - \lambda_- e^{\lambda_- k})^2}\nonumber \\
&=& \frac{\left( \lambda_+^2 e^{2\lambda_+ k} - 2\lambda_+ \lambda_- e^{(\lambda_+ + \lambda_-)k} + \lambda_-^2 e^{2\lambda_- k} \right) - \left( \lambda_+^2 e^{2\lambda_+ k} - \lambda_-^2 e^{(\lambda_+ + \lambda_-)k} - \lambda_+^2 e^{(\lambda_+ + \lambda_-)k} + \lambda_-^2 e^{2\lambda_- k} \right)}{(\lambda_+ e^{\lambda_+ k} - \lambda_- e^{\lambda_- k})^2}\nonumber \\
&=& \frac{(\lambda_+^2 - 2\lambda_+ \lambda_- + \lambda_-^2) e^{(\lambda_+ + \lambda_-)k}}{(\lambda_+ e^{\lambda_+ k} - \lambda_- e^{\lambda_- k})^2} = \frac{(\lambda_+ - \lambda_-)^2 e^{(\lambda_+ + \lambda_-)k}}{(\lambda_+ e^{\lambda_+ k} - \lambda_- e^{\lambda_- k})^2} > 0.\label{h1}
\end{eqnarray}
As $\frac{\partial h}{\partial k} > 0$, the derivative of $G(k)$ is $-\int_0^{\infty}r\cdot\frac{\partial h(k,r)}{\partial k}\md F_{\rho}(r)<0$. Thus, $G(k)$ is continuous and strictly decreases in $(0, \infty)$. Given $G(0) = \mu > 0$ and $G(\infty) < 0$, there exists a unique root $k^* \in (0, \infty)$ such that $G(k^*) = 0$.

Next we prove that the barrier-type dividend law with the barrier point $k^*$ is an equilibrium dividend law.
The regularity conditions ($y \in C^1(\mathbb{R}_+) \cap C^2(NT)$) and the ODE system \eqref{eq:y_system} are satisfied by construction of the function $y(x, r)$. 
The monotonicity of $y$ with respect to $x$ and $r$ and the growth condition \eqref{condition: growth} of $y$ are easily verified. To prove that the barrier-type dividend law with the barrier point $k^*$ is an equilibrium, we only need to verify the variational inequality \eqref{variational inequality}. Specifically, within the no-transaction region $\mathcal{NT} = (0, k^*)$, the infinitesimal generator satisfies
\begin{equation*}
    \mathcal{L}y(x, r) - r y(x, r) = 0.
\end{equation*}
In the pay region $\mathcal{P} = [k^*, \infty)$, we have $y(x, r) = x - k^* + y(k^*, r)$. The generator term becomes
\begin{equation*}
    \mathcal{L}y - ry = \mu - r(x - k^* + y(k^*, r)).
\end{equation*}
As $r > 0$, this expression is decreasing in $x$. Given the aggregate equilibrium condition at $k^*$, we have
\begin{equation*}
    \int_0^\infty (\mu - r y(k^*, r)) \md F_\rho(r) = 0,
\end{equation*}
it follows that for all $x \in \mathcal{P}$, the aggregate generator condition $\int_0^{\infty}  (\mathcal{L}y - ry) \md F_\rho(r) \leq 0$ is satisfied.  To conclude the proof, we must verify the variational inequality for the marginal payoff in $\mathcal{NT}$,
\begin{equation*}
    H(x) := \int_0^\infty (1 - y_x(x,r)) \md F_\rho(r) \leq 0, \quad \forall x \in (0, k^*).
\end{equation*}
As $H(k^*)=0,\;H^{\prime}(k^*)=-\int_0^\infty y_{xx}(k^*-,r)=0$ and $y_{xxx}=\frac{\lambda_+^3e^{\lambda_+ x} - \lambda_-^3e^{\lambda_- x}}{\lambda_+ e^{\lambda_+ k} - \lambda_- e^{\lambda_- k}}>0$, we have $H^{\prime}(x)>0$ on $(0,k^*)$ and thus $H(x)< 0$ on $(0,k^*)$. Therefore, the barrier-type dividend law with the barrier point $k^*$ is exactly an equilibrium dividend law.
\end{proof}
We now compare our results with those of \cite{zhao2014dividend}, which investigate equilibrium dividend strategies under non-exponential discounting. A fundamental difference lies in the admissible strategies:  we consider singular dividend control $D_t$, which allows for lump-sum payments (jumps in the surplus process), while \cite{zhao2014dividend} restrict dividends to be paid at a bounded rate $\ell_t \in [0, M]$, meaning the cumulative dividend process $D_t = \int_0^t \ell_s ds$ is absolutely continuous.

Consider the mixture of exponential discount example in \cite{zhao2014dividend}:
\begin{equation}\label{eq:mixture_h}
h(t) = \sum_{i=1}^N \omega_i e^{-\delta_i t}, \quad t \ge 0.
\end{equation}
This corresponds to our ``linear ambiguity aggregation" case where the ambiguity distribution $F_\rho$ is a discrete measure with weights $\omega_i$ at discount rates $r = \delta_i>0$. 

In \cite{zhao2014dividend}, the equilibrium strategy $\ell^*(x)$ is also shown to be a barrier strategy. Specifically, the equilibrium law depends on the magnitude of the dividend rate bound $M$. When the dividend payment capacity $M$ is sufficiently large, there exists a unique positive barrier $b > 0$ such that

\begin{equation*}
\ell^*(x) = 
\begin{cases} 
0, & 0 \le x < b, \\
M, & x \ge b.
\end{cases}
\end{equation*}
The barrier $b$ is determined by 
\begin{equation}\label{eq:Zhao_barrier_cond}
\sum_{i=1}^N \omega_i V_i'(b,M) = 1,
\end{equation}
where $V_i(x)$ is the component of the value function associated with each discount rate $\delta_i$\footnote{The function $V_i$ plays the same role as $y_i$ in our problem.}. Based on \cite{zhao2014dividend} , the derivative $ V_i'$ at the barrier can be expressed as
\begin{equation}\label{eq:Vi_prime_Zhao}
V_i'(b,M) = \frac{M \theta_{i3}}{\delta_i} \cdot \frac{\lambda_+(\delta_i) e^{\lambda_+(\delta_i)b} -\lambda_-(\delta_i) e^{\lambda_-(\delta_i)b}}{(\lambda_+(\delta_i) + \theta_{i3}) e^{\lambda_+(\delta_i)b} - (\lambda_-(\delta_i) + \theta_{i3}) e^{\lambda_-(\delta_i)b}},
\end{equation}
where $-\theta_{i3}$ is the negative root of $\frac{1}{2}\sigma^2 y^2 + (\mu-M) y - \delta_i = 0$.

We now analyze the behavior of the barrier $b$ as the dividend rate limit $M \to \infty$ and present a rigorous derivation in Theorem \ref{thm: nosmoothconvergence} to show that the solutions to the equations \eqref{eq:Zhao_barrier_cond} in \cite{zhao2014dividend} converge to the solutions to \eqref{eq:nosmoothambiguitybarrier} in our work.
This analysis confirms that the equilibrium singular dividend strategy, which is induced by a barrier-type equilibrium dividend law at $k^*$, is the limit of the bounded-rate equilibrium regular strategy as the payment capacity $M\rightarrow\infty$.
\begin{theorem}\label{thm: nosmoothconvergence}
Let $b(M)$ be the unique equilibrium barrier point for the bounded dividend rate case with a sufficiently large bound $M$, satisfying the aggregate condition $\sum\limits_{i=1}^N \omega_i V_i'(b, M) = 1$. Let $k^*$ be the equilibrium threshold for our singular dividend control problem under the linear aggregation case, satisfying the equation $G(k^*) = \sum\limits_{i=1}^N \omega_i (\mu - \delta_i h(k^*, \delta_i)) = 0$. Then we have 
$ \lim\limits_{M \to \infty} b(M) = k^* . $
\end{theorem}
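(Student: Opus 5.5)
The plan is to show that the bounded-rate barrier equation converges, as $M\to\infty$, to the singular equation $G(k)=0$, and then use the strict monotonicity of $G$ to transfer convergence of the equations into convergence of their roots.

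\textbf{Step 1: asymptotics of $\theta_{i3}$.} Write $\lambda_\pm^i:=\lambda_\pm(\delta_i)$ and
$$
\Phi_M(b):=\sum_{i=1}^N\omega_iV_i'(b,M)-1 .
$$
Solving $\frac12\sigma^2y^2+(\mu-M)y-\delta_i=0$ gives, for large $M$,
$$
\theta_{i3}=\frac{\delta_i}{M-\mu}+O(M^{-2}).
$$
Hence $M\theta_{i3}/\delta_i\to1$ and $\theta_{i3}\to0$, and both limits are uniform in $b$.

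\textbf{Step 2: limit of $V_i'$.} Substituting Step 1 into \eqref{eq:Vi_prime_Zhao} gives, uniformly for $b$ in compacts of $[0,\infty)$,
$$
V_i'(b,M)\;\longrightarrow\;\frac{\lambda_+^ie^{\lambda_+^ib}-\lambda_-^ie^{\lambda_-^ib}}{\lambda_+^ie^{\lambda_+^ib}-\lambda_-^ie^{\lambda_-^ib}}=1 .
$$
This limit is degenerate, so I would expand to first order in $\theta_{i3}$:
$$
V_i'(b,M)=\frac{M\theta_{i3}}{\delta_i}\left(1+\theta_{i3}\,h(b,\delta_i)\right)^{-1},
$$
using the identity $(e^{\lambda_+b}-e^{\lambda_-b})/(\lambda_+e^{\lambda_+b}-\lambda_-e^{\lambda_-b})=h(b,\delta_i)$ from \eqref{h}. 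The denominator in \eqref{eq:Vi_prime_Zhao} is nonzero, since $\lambda_+^i>0>\lambda_-^i$ and $\theta_{i3}$ is small. Since $\theta_{i3}$ solves the quadratic, $M\theta_{i3}/\delta_i=1+(\mu\theta_{i3}-\frac12\sigma^2\theta_{i3}^2)/\delta_i$ exactly. Therefore
$$
V_i'(b,M)-1=\frac{\theta_{i3}}{\delta_i}\bigl(\mu-\delta_ih(b,\delta_i)\bigr)+O(\theta_{i3}^2),
$$
with the remainder uniform on compacts.

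\textbf{Step 3: rescaled limit.} Multiply $\Phi_M$ by $M$. Since $M\theta_{i3}/\delta_i\to1$,
$$
M\,\Phi_M(b)\;\longrightarrow\;\sum_{i=1}^N\omega_i\bigl(\mu-\delta_ih(b,\delta_i)\bigr)=G(b),
$$
uniformly on compacts of $[0,\infty)$. This is the key identification of the bounded-rate barrier equation with the singular aggregate smooth-pasting condition.

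\textbf{Step 4: convergence of the roots.} By the proof of Theorem \ref{thm:linear equilibrium}, $G$ is continuous and strictly decreasing, with $G(0)=\mu>0$ and $\lim_{k\to\infty}G(k)<0$. Fix $\eta>0$. Then $G(k^*-\eta)>0>G(k^*+\eta)$, so by uniform convergence on $[k^*-\eta,k^*+\eta]$, for all large $M$ we have $M\Phi_M(k^*-\eta)>0>M\Phi_M(k^*+\eta)$. Hence $\Phi_M$ has a root in $(k^*-\eta,k^*+\eta)$. By the uniqueness of $b(M)$ stated in \cite{zhao2014dividend}, that root is $b(M)$, so $|b(M)-k^*|<\eta$.

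\textbf{Main obstacle.} The hard step is justifying that the root found in Step 4 is $b(M)$ without any a priori confinement of $b(M)$ to a compact set. The argument relies entirely on the uniqueness of the root of $\Phi_M$ from \cite{zhao2014dividend}. If that uniqueness is only asserted for $b>0$, I would confirm it applies on all of $(0,\infty)$. As a fallback, I would bound $b(M)$ a priori by showing $M\Phi_M(b)<0$ for all $b\ge B$, uniformly in large $M$: for large $b$, $h(b,\delta_i)\to1/\lambda_+^i$, and the $O(\theta_{i3}^2)$ remainder is uniform because $h$ is bounded on $[0,\infty)$. This rules out roots escaping to infinity.
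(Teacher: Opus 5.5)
Your proof is correct and follows essentially the paper's route. The same first-order expansion $V_i'(b,M)=1+\frac{\mu-\delta_i h(b,\delta_i)}{M}+O(M^{-2})$ identifies $M\bigl(\sum_i\omega_iV_i'(b,M)-1\bigr)$ with $G(b)$, and strict monotonicity of $G$ then carries this over to the roots.

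The only difference is the last step. The paper plugs $b(M)$ directly into the expansion to get $G(b(M))\to0$ and then inverts $G$. This is legitimate because the remainder is uniform over all of $[0,\infty)$, since $0\le h(b,\delta_i)<1/\lambda_+(\delta_i)$, as you note in your fallback. So your appeal to uniqueness of the root of $\Phi_M$ in the bounded-rate model is not actually needed.
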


\begin{proof}
In the bounded rate case, from \eqref{eq:Vi_prime_Zhao}, the derivative of the value function at the barrier $b$ is given by
$$ V_i'(b, M) = \frac{M\theta_{i3}}{\delta_i} \cdot \frac{1}{1 + \theta_{i3} h(b, \delta_i)},  $$
where $h$
is defined by \eqref{h}. As $-\theta_{i3}$ is the negative root of $\frac{1}{2}\sigma^2 \theta^2 + (\mu-M)\theta - \delta_i = 0$, it satisfies the following asymptotic behavior as $M \to \infty$,
$$ \theta_{i3} = \frac{2\delta_i}{\sqrt{(M-\mu)^2+2\sigma^2\delta_i}+M-\mu} \implies \frac{M\theta_{i3}}{\delta_i} = 1 + \frac{\mu}{M} + O(M^{-2}) . $$
Performing a Taylor expansion of $V_i'(b, M)$ in terms of $1/M$ yields
$$ V_i'(b, M) = \left( 1 + \frac{\mu}{M} + O(M^{-2}) \right) \left( 1 - \frac{\delta_i h(b, \delta_i)}{M} + O(M^{-2}) \right) = 1 + \frac{\mu - \delta_i h(b, \delta_i)}{M} + O(M^{-2}) .$$
Substituting this into the equilibrium condition $\sum\limits_{i=1}^N \omega_i V_i'(b(M), M) = 1$ and noting that $\sum\limits_{i=1}^N \omega_i = 1$, we obtain
$$ 1 + \frac{1}{M} \sum_{i=1}^N \omega_i (\mu - \delta_i h(b(M), \delta_i)) + O(M^{-2}) = 1 . $$
Multiplying by $M$ and recalling that $G(k) = \sum_{i=1}^N \omega_i (\mu - \delta_i h(k, \delta_i))$, it follows that,  for sufficiently large $M\rightarrow\infty$,
$ G(b(M)) \rightarrow 0. $
As $G(k^*)=0$ and $G$ is strictly decreasing, the inverse function $G^{-1}$ exists and is continuous at the point $0$.  Then
$$ \lim_{M \to \infty} b(M) = \lim_{M \to \infty} G^{-1}\big(G(b(M))\big) = G^{-1}\big(\lim_{M \to \infty} G(b(M))\big) = G^{-1}(0) = k^* .$$
Therefore, the bounded rate equilibrium barrier $b(M)$  converges to the singular barrier  $k^*$ .
\end{proof}

\subsubsection{Exponential Aggregation Case}\label{sec:expon}

In this subsection, we consider the case where the central planner's preferences are characterized by an exponential  ambiguity aggregation function
\begin{equation*}
    \phi(x) = -\frac{1}{\alpha} e^{-\alpha x}, \quad \alpha > 0,
\end{equation*}
where $\alpha\ge 0$ represents the central planner's absolute ambiguity aversion. According to \eqref{eq:smoothpast}, the equilibrium barrier point $k^*$ must satisfy
\begin{equation*}
G(k^*)=\int_0^\infty e^{-\alpha h(k^*, r)} \left( \mu - r h(k^*, r) \right) \md F_\rho(r) = 0.
\end{equation*}

\begin{assumption}\label{assum:alpha_constraint}
The coefficient of absolute ambiguity aversion $\alpha$ satisfies
$
    \alpha < \frac{\mu + \sqrt{\mu^2 + 2\sigma^2 \underline{r}}}{\sigma^2}.
$
\end{assumption}

\begin{lemma}\label{lem:G_monotonicity}
Under the exponential ambiguity aggregation function and Assumption \ref{assum:alpha_constraint}, define the aggregate equilibrium function $G(k)$ by
\begin{equation*}
    G(k) = \int_{0}^{\infty} e^{-\alpha h(k, r)} \left( \mu - r h(k, r) \right) \md F_{\rho}(r).
\end{equation*}
Then
\begin{enumerate}
    \item $G(k)$ is continuous and strictly decreasing on $(0, \infty)$;
    \item There exists a unique root $k^* \in (0, \infty)$ such that $G(k^*) = 0$.
\end{enumerate}
\end{lemma}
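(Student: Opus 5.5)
The plan is to differentiate under the integral and show that the integrand of $G'(k)$ is strictly negative for every $r\in\operatorname{Supp}(F_\rho)$. Continuity of $G$ comes first. Since $\operatorname{Supp}(F_\rho)\subset[\underline r,\bar r]$ is compact and $h(k,r)$ in \eqref{h} is jointly continuous in $(k,r)$ on $(0,\infty)\times[\underline r,\bar r]$ (the denominator $\lambda_+e^{\lambda_+k}-\lambda_-e^{\lambda_-k}$ is strictly positive), the integrand $e^{-\alpha h}(\mu-rh)$ is bounded and continuous. Dominated convergence then gives continuity of $G$, and the same argument applied to $\partial_k$ of the integrand, which is locally bounded in $k$ uniformly in $r$, justifies differentiation under the integral sign.

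For the derivative, write $h_k:=\partial h/\partial k$, which is strictly positive by \eqref{h1}. Pointwise we have
\[
\frac{\partial}{\partial k}\Bigl[e^{-\alpha h}(\mu-rh)\Bigr]=-e^{-\alpha h}h_k\bigl(\alpha(\mu-rh)+r\bigr).
\]
Strict decrease of $G$ therefore reduces to showing $\alpha(\mu-rh(k,r))+r>0$ for all $k>0$ and $r\in\operatorname{Supp}(F_\rho)$. Since $h_k>0$ and $h(0,r)=0$, $h(\cdot,r)$ increases towards its limit $1/\lambda_+(r)$ (computed in the text preceding Theorem \ref{thm:linear equilibrium}), so $0<h(k,r)<1/\lambda_+(r)$. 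When $\mu-rh\ge0$ the inequality is immediate. Otherwise it suffices to use the bound $\mu-rh>\mu-r/\lambda_+(r)$. Using $\sigma^2\lambda_+^2/2+\mu\lambda_+=r$, we get $r/\lambda_+=\mu+\sigma^2\lambda_+/2$, hence $\mu-r/\lambda_+=-\sigma^2\lambda_+(r)/2$. The required condition becomes
\[
r-\alpha\sigma^2\lambda_+(r)/2\ge0,\qquad\text{equivalently}\qquad \alpha\le \frac{2r}{\sigma^2\lambda_+(r)}=\frac{2r}{\sqrt{\mu^2+2\sigma^2r}-\mu}=\frac{\mu+\sqrt{\mu^2+2\sigma^2r}}{\sigma^2}.
\]
The right-hand side is increasing in $r$, so it is at least its value at $\underline r$. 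Assumption \ref{assum:alpha_constraint} then gives strict positivity of $\alpha(\mu-rh)+r$ uniformly, because the bound on $h$ is strict. This yields $G'(k)<0$, which proves item 1.

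For item 2, the boundary behaviour was already computed before Theorem \ref{thm:linear equilibrium} for general $\phi$. As $k\to0$, $G(k)\to\mu>0$ because $\phi'(0)=1$ here. As $k\to\infty$, $G(k)\to\int e^{-\alpha/\lambda_+}(\mu-\sqrt{\mu^2+2\sigma^2r})/2\,\md F_\rho<0$, with the limit exchange again justified by dominated convergence. The intermediate value theorem together with strict monotonicity gives a unique root $k^*$.

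The main obstacle is the sign of $\alpha(\mu-rh)+r$. The key algebraic step is the identity $r/\lambda_+=\mu+\sigma^2\lambda_+/2$, which converts Assumption \ref{assum:alpha_constraint} into exactly this positivity condition. I would also double-check that $h<1/\lambda_+$ holds strictly and uniformly enough to keep the inequality strict.
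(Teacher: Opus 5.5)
Your proposal is correct and follows essentially the same route as the paper: you differentiate $G$ under the integral, reduce strict decrease to $\alpha(\mu-rh(k,r))+r>0$, bound $h<1/\lambda_+(r)$ to convert this into $\alpha\le(\mu+\sqrt{\mu^2+2\sigma^2 r})/\sigma^2$, use monotonicity in $r$ together with Assumption \ref{assum:alpha_constraint}, and then apply the boundary limits and the intermediate value theorem. Your identity $r/\lambda_+=\mu+\sigma^2\lambda_+/2$ and the dominated-convergence justifications are the same computation as in the paper, only with more detail.
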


\begin{proof}
Differentiating $G(k)$ with respect to $k$ yields
\begin{equation*}
    G'(k) = \int_{0}^{\infty} e^{-\alpha h(k, r)} \frac{\partial h(k, r)}{\partial k} \left[ -\alpha(\mu - r h(k, r)) - r \right] dF_{\rho}(r).
\end{equation*}
Given that $\frac{\partial h(k, r)}{\partial k} > 0$ (see \eqref{h}) and $e^{-\alpha h(k, r)} > 0$ for all $k, r > 0$, the condition $G'(k) < 0$ is satisfied if
\begin{equation*}
    \alpha(\mu - r h(k, r)) + r > 0, \quad \forall k > 0, \,r \in \mathrm{Supp}(F_{\rho}).
\end{equation*}
As $h(k, r)$ is strictly increasing in $k$ and $\lim\limits_{k \to \infty} h(k; r) = 1/\lambda_{+}(r)$, we compute
\begin{equation*}
   r h(k, r)-\mu\le \frac{r}{\lambda_{+}(r)} - \mu = \frac{\sqrt{\mu^2 + 2\sigma^2 r} - \mu}{2}.
\end{equation*}
Thus the sufficiency condition for $\alpha$ is given by
\begin{equation*}
    \alpha \leq \frac{r}{\frac{r}{\lambda_{+}(r)} - \mu} = \frac{\mu + \sqrt{\mu^2 + 2\sigma^2 r}}{\sigma^2}.
\end{equation*}
As the right-hand side of the last inequality is an increasing function of $r$, Assumption \ref{assum:alpha_constraint} ensures that $G'(k) < 0$ holds globally for all $k \in (0, \infty)$.
Hence, $G=0$ has a unique root  $k^* \in (0, \infty)$.
\end{proof}

\begin{lemma}\label{lem:G_tilde_x_monotonicity}
Let $k^* > 0$ be the unique equilibrium barrier. For any $x \in (0, k^*]$, define the state-dependent aggregate function $\tilde{G}(x)$ by
\begin{equation*}
    \tilde{G}(x) = \int_{0}^{\infty} e^{-\alpha y(x, r)} \left( \mu - r y(x, r) \right) \md F_{\rho}(r),
\end{equation*}
where $y(x, r)$ is the expected discounted dividend  given the barrier-type dividend law with fixed barrier $k^*$. Under Assumption \ref{assum:alpha_constraint}, the function $\tilde{G}(x)$ is strictly decreasing in $ (0, k^*]$.
\end{lemma}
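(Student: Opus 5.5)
Fix the barrier $k^*$. In the constant-coefficient model, Lemma~\ref{lma:mono} gives $y(x,r)=A(r)\bigl(e^{\lambda_+x}-e^{\lambda_-x}\bigr)$ with $A(r)=1/\bigl(\lambda_+e^{\lambda_+k^*}-\lambda_-e^{\lambda_-k^*}\bigr)>0$. The plan is to mimic the proof of Lemma~\ref{lem:G_monotonicity}, with the barrier variable $k$ replaced by the state variable $x$.

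First, differentiate under the integral. Since $\operatorname{Supp}(F_\rho)\subset[\underline r,\bar r]$ is compact and $y$, $y_x$ are continuous in $(x,r)$, differentiation under the integral sign is justified, and we get
\begin{equation*}
\tilde G'(x)=\int_0^\infty e^{-\alpha y(x,r)}\,y_x(x,r)\Bigl[-\alpha\bigl(\mu-ry(x,r)\bigr)-r\Bigr]\md F_\rho(r).
\end{equation*}
Next, we have $y_x(x,r)=A(r)\bigl(\lambda_+e^{\lambda_+x}-\lambda_-e^{\lambda_-x}\bigr)>0$ for all $x\in(0,k^*]$, because $\lambda_+>0>\lambda_-$. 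Hence it is enough to show that the bracket is strictly negative, that is,
\begin{equation*}
\alpha\bigl(\mu-ry(x,r)\bigr)+r>0,\qquad x\in(0,k^*],\ r\in\operatorname{Supp}(F_\rho).
\end{equation*}

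If $\mu-ry(x,r)\ge0$, the inequality holds trivially since $r>0$. Otherwise we need an upper bound on $ry(x,r)-\mu$. The natural bound is $y(x,r)\le y(k^*,r)=h(k^*,r)<1/\lambda_+(r)$. The first inequality uses the monotonicity of $y$ in $x$, which follows from $y_x>0$. The second follows because $h(\cdot,r)$ is strictly increasing by \eqref{h1} and converges to $1/\lambda_+(r)$. Together these give
\begin{equation*}
ry(x,r)-\mu<\frac{r}{\lambda_+(r)}-\mu=\frac{\sqrt{\mu^2+2\sigma^2r}-\mu}{2}.
\end{equation*}

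It then suffices that $\alpha\le r/\bigl(r/\lambda_+(r)-\mu\bigr)=\bigl(\mu+\sqrt{\mu^2+2\sigma^2r}\bigr)/\sigma^2$. The right-hand side is increasing in $r$, so Assumption~\ref{assum:alpha_constraint}, which imposes this bound at $r=\underline r$, yields it for every $r$ in the support. The bracket is therefore strictly negative, and the integrand of $\tilde G'$ is strictly negative for every $r$. This gives $\tilde G'(x)<0$ on $(0,k^*]$, taking one-sided derivatives at $k^*$. Hence $\tilde G$ is strictly decreasing.

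The only delicate point is the uniform bound $y(x,r)<1/\lambda_+(r)$. It rests on combining the monotonicity in $x$ with the monotone convergence of $h(k,r)$ in $k$, both of which were already established in \eqref{h1} and Lemma~\ref{lem:G_monotonicity}. Everything else is routine.
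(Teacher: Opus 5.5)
Your proposal is correct and takes essentially the same route as the paper. You differentiate $\tilde G$, use $e^{-\alpha y}y_x>0$, and bound the bracket via $y(x,r)\le h(k^*,r)<1/\lambda_+(r)$ together with Assumption~\ref{assum:alpha_constraint}, exactly as in Lemma~\ref{lem:G_monotonicity}; you also make explicit the sign case split and the strictness that the paper only sketches by reference to that lemma.
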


\begin{proof}
Differentiating $\tilde{G}(x)$ with respect to the state $x$ yields
\begin{align*}
    \frac{d\tilde{G}(x)}{dx} &= \int_{0}^{\infty} \frac{d}{dx} \left[ e^{-\alpha y(x, r)} \left( \mu - r y(x, r) \right) \right] \md F_{\rho}(r) = \int_{0}^{\infty} \left[ -\alpha e^{-\alpha y} y_x (\mu - r y) + e^{-\alpha y} (-r y_x) \right] \md F_{\rho}(r) \\
    &= \int_{0}^{\infty} e^{-\alpha y} y_x \left[ \alpha(r y(x, r) - \mu) - r \right] \md F_{\rho}(r).
\end{align*}
As $e^{-\alpha y} > 0$ and $y_x > 0$, the derivative is non-positive if the term in the brackets satisfies
\begin{equation*}
    \alpha(r y(x, r) - \mu) \leq r, \quad \forall r \in \mathrm{Supp}(F_{\rho}).
\end{equation*}
Because $y(x, r)$ is strictly increasing in $x$,  $\alpha(ry(x, r) - \mu)$ attains its maximum  $x = k^*$, where it equals $\alpha(rh(k^*,r) - \mu)$.  Consequently, by the same argument as  in Lemma \ref{lem:G_monotonicity}, $\frac{d\tilde{G}(x)}{dx} < 0$ in $(0,k^*)$.
\end{proof}
Now we can prove that the barrier-type dividend law with barrier point $k^*$ is actually an equilibrium dividend law.
\begin{theorem}
Under Assumption \ref{assum:alpha_constraint}, the barrier strategy $\hat{\mathcal{D}}$ with the barrier point $k^*$ is an equilibrium singular dividend law in the exponential aggregation case.
\end{theorem}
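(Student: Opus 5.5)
The plan is to apply the verification theorem, Theorem \ref{thm:verification}, with $y(x,r)=A(r)(e^{\lambda_+x}-e^{\lambda_-x})$ on $[0,k^*)$ and $y(x,r)=x-k^*+h(k^*,r)$ on $[k^*,\infty)$, where $k^*$ is the unique root from Lemma \ref{lem:G_monotonicity}. By Theorem \ref{thm:conditionverify}, the barrier law is admissible and $y$ is its expected discounted dividend. The same theorem gives the martingale condition \ref{assump: verimartingale} and the transversality condition \ref{assump: trans}. Regularity, the system \eqref{eq:y_system}, monotonicity in $x$ and $r$ (Lemma \ref{lma:mono} together with $y_x>0$), and linear growth of $y$, $y_x$ and $\mathcal{L}y-ry$ all follow from the explicit form. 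So the real work is to check \eqref{variational inequality}, \eqref{additional assump1} and \eqref{additional assump2} for $\phi'(z)=e^{-\alpha z}$.

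The key simplification is that $\phi'(d+y)=e^{-\alpha d}\phi'(y)$. Since $e^{-\alpha d}>0$, conditions \eqref{additional assump1} and \eqref{additional assump2} follow from the corresponding inequalities with $d=0$. It therefore suffices to show three things:
\[
H(x):=\int_0^\infty e^{-\alpha y(x,r)}(1-y_x(x,r))\,\md F_\rho(r)\le0,\qquad \tilde G_{\mathrm{full}}(x):=\int_0^\infty e^{-\alpha y}(\mathcal{L}y-ry)\,\md F_\rho(r)\le0,
\]
for all $x>0$, and that at each $x$ one of the two integrals equals zero.

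For the generator part: on $\mathcal{NT}$, $\mathcal{L}y-ry=0$, so $\tilde G_{\mathrm{full}}=0$. On $\mathcal{P}$, $\mathcal{L}y-ry=\mu-ry$ with $y=x-k^*+h(k^*,r)$, so $\tilde G_{\mathrm{full}}(x)=\int e^{-\alpha y}(\mu-ry)\,\md F_\rho$. Its $x$-derivative is $\int e^{-\alpha y}[\alpha(ry-\mu)-r]\,\md F_\rho$ because $y_x=1$. The bracket is only bounded via Assumption \ref{assum:alpha_constraint} while $ry-\mu\le r/\lambda_+-\mu$, and that bound fails for large $x$. So I will instead argue by sign. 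Writing $u=\mu-ry$, the integrand is $e^{-\alpha y}u$. At $x=k^*$ the integral is $G(k^*)=0$. For $x>k^*$, each $u(x,r)$ is strictly decreasing in $x$ and $e^{-\alpha y}$ decreases by the common factor $e^{-\alpha(x-k^*)}$. Hence
\[
e^{\alpha(x-k^*)}\tilde G_{\mathrm{full}}(x)=\int e^{-\alpha h(k^*,r)}\bigl(\mu-rh(k^*,r)-r(x-k^*)\bigr)\,\md F_\rho=G(k^*)-(x-k^*)\int re^{-\alpha h}\,\md F_\rho<0.
\]
This factoring is exactly where exponential aggregation makes the argument clean.

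For the marginal part: on $\mathcal{P}$, $y_x=1$, so $H=0$. On $(0,k^*)$, I need $H(x)\le0$, i.e. $\int e^{-\alpha y}(1-y_x)\,\md F_\rho\le0$. In the linear case this came from convexity of $y_x$; here the weights vary with $x$. I would proceed as follows. On $\mathcal{NT}$, $\tfrac12\sigma^2y_{xx}=ry-\mu y_x$, and $H(k^*)=0$. Differentiating gives
\[
H'(x)=\int e^{-\alpha y}\bigl[-\alpha y_x(1-y_x)-y_{xx}\bigr]\,\md F_\rho .
\]
At $k^*$ this equals $\frac{2}{\sigma^2}\int e^{-\alpha h}(\mu-rh)\,\md F_\rho=\frac{2}{\sigma^2}G(k^*)=0$. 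This is the aggregate smooth-pasting condition, and it is the same computation Lemma \ref{lem:G_tilde_x_monotonicity} uses. I then plan to show $H$ is increasing on $(0,k^*)$. Substituting $y_{xx}$ gives
\[
H'(x)=\int e^{-\alpha y}\Bigl[(1-y_x)\bigl(\tfrac{2\mu}{\sigma^2}-\alpha y_x\bigr)+\tfrac{2}{\sigma^2}(\mu-ry)\Bigr]\,\md F_\rho .
\]
Note that $y_x\ge1$ on $(0,k^*)$ since $y_{xx}<0$ there: $y_{xxx}>0$ and $y_{xx}(k^*-)=\frac{2}{\sigma^2}(rh-\mu)$. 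However, the sign of $rh-\mu$ may differ across $r$, so this needs more care.

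The \emph{main obstacle} is this step: proving $H\le0$ on $(0,k^*)$ for heterogeneous $r$. I would first try a comparison argument. Integrate backward from $k^*$, using $H(k^*)=H'(k^*)=0$, and show that $H''>0$ (or $H'>0$) via the decreasing property of $\tilde G(x)$ from Lemma \ref{lem:G_tilde_x_monotonicity}. That lemma gives $\tilde G(x)>\tilde G(k^*)=0$ on $(0,k^*)$, so the term $\int e^{-\alpha y}(\mu-ry)\,\md F_\rho$ is positive. The remaining term $(1-y_x)(\frac{2\mu}{\sigma^2}-\alpha y_x)$ must then be controlled using Assumption \ref{assum:alpha_constraint} and the bound $y_x\le y_x(0,r)$. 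If a direct sign argument fails, the fallback is to express $H(x)=-\int_x^{k^*}H'(s)\,\md s$ and bound $H'$ from below by $\frac{2}{\sigma^2}\tilde G(s)$ minus a term that can be absorbed through a Gronwall-type estimate. Once $H\le0$ on $\mathcal{NT}$ is established, all hypotheses of Theorem \ref{thm:verification} hold and the barrier law is an equilibrium.
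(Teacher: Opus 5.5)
\textbf{Verdict: there is a genuine gap.} The only nontrivial step, $H\le 0$ on $(0,k^*)$, is never proved.

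\textbf{What is already correct.} Your reduction is sound. The identity $\phi'(d+y)=e^{-\alpha d}\phi'(y)$ disposes of \eqref{additional assump1}--\eqref{additional assump2}. Your factoring argument on $\mathcal P$,
\[
e^{\alpha(x-k^*)}\int e^{-\alpha y}(\mu-ry)\,\md F_\rho=G(k^*)-(x-k^*)\int re^{-\alpha h}\,\md F_\rho ,
\]
is correct, and more explicit than the paper, which only says ``same argument as in the linear case''.

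\textbf{Why your primary route fails.} The map $r\mapsto\mu-rh(k^*,r)$ is strictly decreasing and its $e^{-\alpha h}$-weighted integral vanishes. So, for nondegenerate $F_\rho$, it changes sign on $\operatorname{Supp}(F_\rho)$. Hence $y_{xx}(k^*-,r)$ takes both signs in $r$, and so does $1-y_x(x,r)$ near $k^*$. The cross term therefore has no termwise sign, and neither Assumption \ref{assum:alpha_constraint} nor bounds on $y_x$ can supply one. Your displayed formula for $H'$ also has a sign slip; the correct expression is
\[
H'(x)=\int_0^\infty e^{-\alpha y}\Bigl[-(1-y_x)\bigl(\alpha y_x+\tfrac{2\mu}{\sigma^2}\bigr)+\tfrac{2}{\sigma^2}(\mu-ry)\Bigr]\md F_\rho(r).
\]

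\textbf{The missing idea.} Regroup the cross term into a nonnegative part plus a multiple of $H$ itself. Writing $(y_x-1)y_x=(y_x-1)^2+(y_x-1)$ and $\mu y_x=\mu+\mu(y_x-1)$ gives
\[
H'(x)+\Bigl(\alpha+\tfrac{2\mu}{\sigma^2}\Bigr)H(x)=\alpha\int_0^\infty e^{-\alpha y}(y_x-1)^2\,\md F_\rho(r)+\tfrac{2}{\sigma^2}\tilde G(x).
\]
Lemma \ref{lem:G_tilde_x_monotonicity} gives $\tilde G(x)>\tilde G(k^*)=0$ on $(0,k^*)$. With $c=\alpha+2\mu/\sigma^2$, the function $e^{cx}H(x)$ is therefore strictly increasing on $(0,k^*)$. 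Since it vanishes at $k^*$, we get $H<0$ on $[0,k^*)$. This is exactly the ``Gronwall-type absorption'' you gestured at. Without this identity, however, the proposal does not prove the theorem.

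\textbf{How the paper does it.} The paper uses the same mechanism, but only at a hypothetical zero $x_0$ of $H$. There $\int e^{-\alpha y}y_x=\int e^{-\alpha y}$. Cauchy--Schwarz (equivalently $\int e^{-\alpha y}(y_x-1)^2\ge0$) then makes the $\alpha$-terms nonnegative, so $H'(x_0)\ge\frac{2}{\sigma^2}\tilde G(x_0)>0$. Separately, $H(k^*)=H'(k^*)=0$ and $H''(k^*-)=-\int e^{-\alpha y}y_{xxx}\,\md F_\rho<0$, so $H<0$ just left of $k^*$. A last-zero crossing argument then yields $H\le 0$. The integrating-factor version above avoids the second-order analysis at $k^*$ and gives strict negativity directly.
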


\begin{proof}
Based on the same argument as in Theorem \ref{thm:linear equilibrium}, it remains to verify the variational inequality for the marginal payoff in $\mathcal{NT}$,
\begin{equation*}
    H(x) := \int_0^\infty \phi'(y(x; r)) (1 - y_x(x; r)) dF_\rho(r) \leq 0, \quad \forall x \in [0, k^*).
\end{equation*}
Differentiating $H(x)$ and substituting $y_{xx} = \frac{2}{\sigma^2}(ry - \mu y_x)$, we obtain
\begin{align*}
    H'(x) &= \int_0^\infty \left[ \phi''(y) y_x (1 - y_x) + \phi'(y) (-y_{xx}) \right] \md F_\rho(r) = \int_0^\infty e^{-\alpha y} \left( \alpha y_x^2 - \alpha y_x + \frac{2\mu}{\sigma^2} y_x - \frac{2r}{\sigma^2} y \right) \md F_\rho(r).
\end{align*}
Suppose $H(x) = 0$ for some $x \in [0, k^*)$. Then
\begin{equation}\label{eq:H_zero_cond}
    \int_0^\infty e^{-\alpha y} \md F_\rho(r) = \int_0^\infty e^{-\alpha y} y_x \md F_\rho(r).
\end{equation}
By the Cauchy-Schwarz inequality,
\begin{equation*}
    \left( \int_0^\infty e^{-\alpha y} y_x \md F_\rho(r) \right)^2 \leq \left( \int_0^\infty e^{-\alpha y} y_x^2 \md F_\rho(r) \right) \left( \int_0^\infty e^{-\alpha y}\md F_\rho(r) \right).
\end{equation*}
Substituting \eqref{eq:H_zero_cond} into the above yields
\begin{equation*}
    \int_0^\infty e^{-\alpha y} y_x^2 \md F_\rho(r) \geq \int_0^\infty e^{-\alpha y} \md F_\rho(r) = \int_0^\infty e^{-\alpha y} y_x \md F_\rho(r).
\end{equation*}
Consequently, the terms involving $\alpha$ in $H'(x)$ satisfy $\int e^{-\alpha y} (\alpha y_x^2 - \alpha y_x) \md F_\rho (r)\geq 0$. Then
\begin{equation*}
    H'(x) \geq \frac{2}{\sigma^2} \int_0^\infty e^{-\alpha y} (\mu y_x - ry) \md F_\rho = \frac{2}{\sigma^2} \int_0^\infty e^{-\alpha y} (\mu - ry) \md F_\rho > 0,
\end{equation*}
where the final inequality follows from the fact that $\tilde{G}(x) = \int_0^{\infty} e^{-\alpha y}(\mu - ry)\md F_{\rho}(r)$ is strictly decreasing by Lemma \ref{lem:G_tilde_x_monotonicity} and $\tilde{G}(k^*) = 0$.
At the boundary $k^*$, we have $H(k^*) = 0$ and $H'(k^*) = 0$. Furthermore, 
$$H''(k^*)= \int_0^\infty e^{-\alpha y} ( - y_{xxx}) dF_\rho< 0.$$ 
Hence $H(x) < 0$ in a left-neighborhood of $k^*$. If $H(x)$ were to exceed 0 elsewhere in $[0, k^*)$, it would have to cross 0 from below at some $x_0$, implying $H'(x_0) \leq 0$. This contradicts the implication proved above that $H(x_0)=0$ entails $H'(x_0) > 0$. Thus, $H(x) \leq 0$ for all $x \in [0, k^*)$.
\end{proof}

\section{Numerical Examples}\label{sec:Numerical examples}
In this section, we present some numerical examples to illustrate the theoretical results and examine the impact of discount-rate ambiguity on the equilibrium barrier. We consider the diffusion model with the following baseline parameters. The drift of the surplus process is $\mu=0.1$, and the volatility is $\sigma=0.2$. The discount rate takes only two possible values: $r_1=0.03$ (patient members) and $r_2=0.07$ (impatient members), and follows the  two‑point distribution $\mathbb{P}(\rho = r_1)=\omega$ and $\mathbb{P}(\rho = r_2)=1-\omega$. We consider three values of the weight $\omega=0.2,0.5,0.8$ and also the extreme cases $\omega=0$ (all members impatient) and $\omega=1$ (all members patient) as benchmarks. We consider two specifications of the ambiguity aggregation function: the ambiguity-neutral case $\phi(x)=x$ (linear aggregation) and the exponential aggregation case $\phi(x)=-\frac{1}{\alpha}e^{-\alpha x}$. For each case, we compute the equilibrium barrier $k^*$ from the corresponding equilibrium condition derived above,  for the linear aggregation case,
$$
\omega (\mu-r_1h(k^*,r_1))+(1-\omega)(\mu-r_2h(k^*,r_2))=0,
$$
which is equivalent to 
$$
\omega r_1 h(k^*,r_1)+(1-\omega)r_2h(k^*,r_2)=\mu,
$$
and for the exponential ambiguity aggregation case,
$$
\omega e^{-\alpha h(k^*,r_1)}(\mu-r_1h(k^*,r_1))+(1-\omega)e^{-\alpha h(k^*,r_2)}(\mu-r_2 h(k^*,r_2))=0,
$$
where $h(k,r):=\frac{e^{\lambda_+(r) k} - e^{\lambda_-(r) k}}{\lambda_+(r) e^{\lambda_+(r) k} - \lambda_-(r) e^{\lambda_-(r) k}}$ and $\lambda_{\pm}(r):=\frac{-\mu\pm\sqrt{\mu^2+2\sigma^2r}}{\sigma^2}.$

 \begin{figure}[htbp]
  \centering
  \begin{minipage}{0.5\textwidth}
   \centering
  \includegraphics[totalheight=5.9cm]{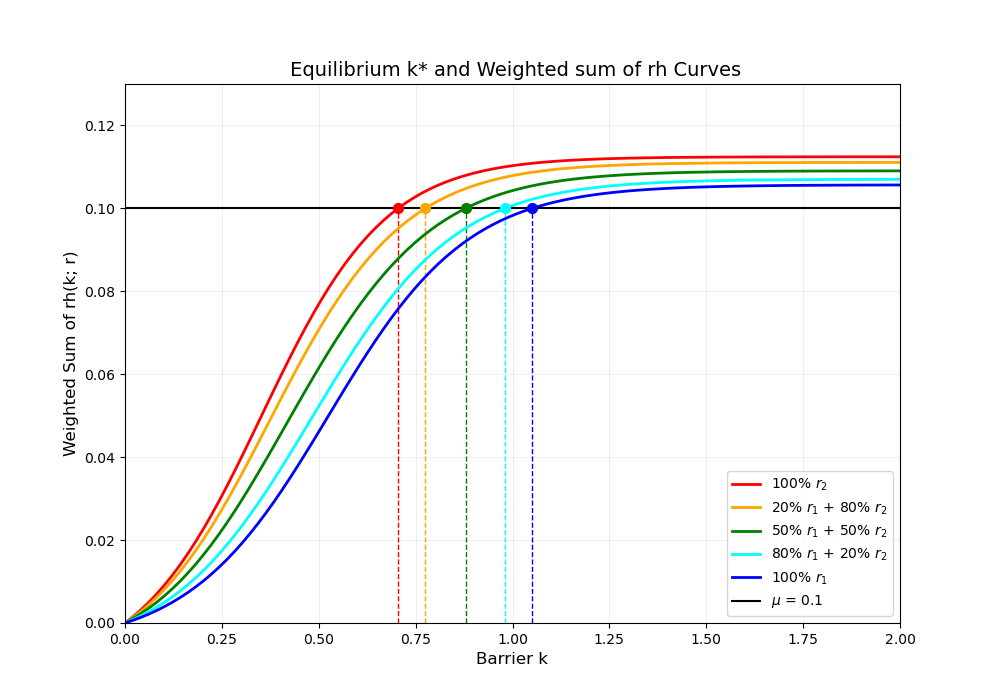}
\caption{Weighted sum of $rh(k,r)$ and equilibrium barrier $k^*$ under different distributions.}
  \label{fig:barrier}
  \end{minipage}\hfill
 \begin{minipage}{0.5\textwidth} 
  \centering
  \includegraphics[totalheight=5.9cm]{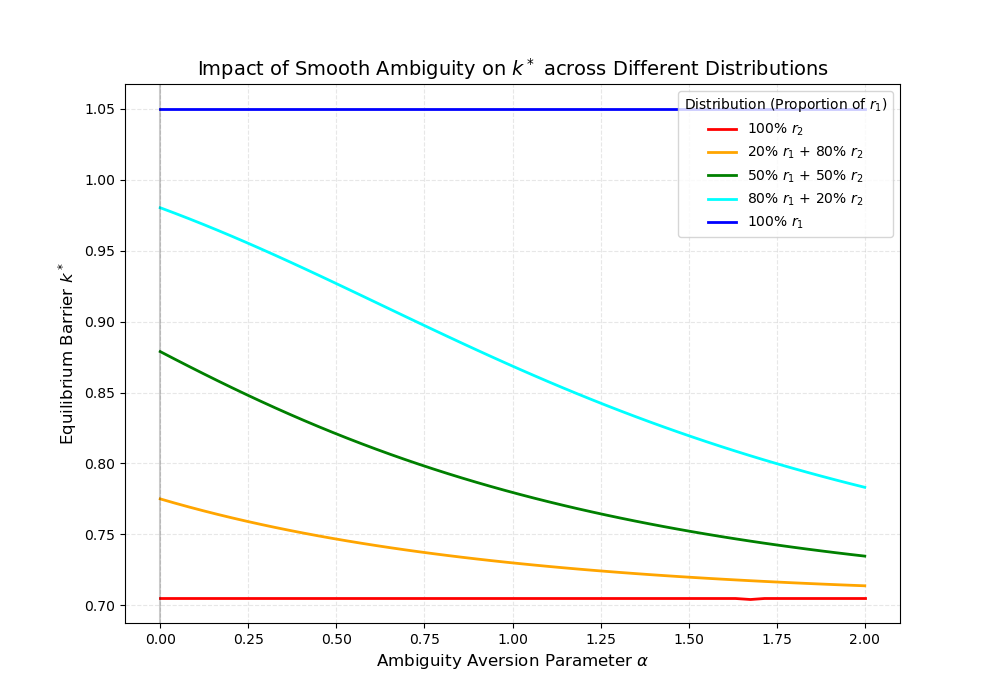}
\caption{Equilibrium barrier $k^*$ under different discount rate distributions and ambiguity aversion $\alpha$.}
  \label{fig:expbarrier}
  \end{minipage}\hfill
\end{figure}

Figure \ref{fig:barrier} illustrates the equilibrium barrier $k^*$ for the linear aggregation case ($\phi(x)=x$) as the proportion of patient members (i.e., the proportion of members with discount rate $r_1$) changes. The barrier $k^*$ is determined by the intersection of the horizontal line $\mu=0.1$ with the curve of the weighted sum of $rh(k,r)$. The resulting intersection points show that $k^*$ increases monotonically with $\omega$. When the group consists entirely of impatient members $(\omega=0)$, the barrier is lowest, reflecting a strong preference for immediate dividend payouts. As the proportion of patient members grows, the central planner becomes more willing to retain surplus and delay dividends, thus raising the barrier. 

Figure \ref{fig:expbarrier} plots the equilibrium barrier $k^*$ under exponential ambiguity aggregation with the central planner's ambiguity aversion parameter $\alpha\in [0,2]$\footnote{It is easy to verify that $\alpha\in[0,2]$ satisfies Assumption \ref{assum:alpha_constraint}.} for different discount distributions. In the homogeneous cases $\omega=0,1$, the barrier is constant with respect to $\alpha$. This is expected because when the discount rate is deterministic, the outer integral in the equilibrium condition collapses, and ambiguity aversion has no effect. For the mixed distributions ($\omega=0.2, 0.5,0.8$), the barrier decreases as $\alpha$ increases. This indicates that a more ambiguity‑averse central planner places greater weight on the high discount rate (impatient members), effectively behaving as if the group were more impatient. Consequently, the equilibrium barrier is lowered, and dividends are paid out earlier.

As the central planner aggregates heterogeneous discount rates, a natural question arises: what single discount rate, if assigned to a representative agent who optimizes in the classical setting, would yield an optimal barrier that coincides with the equilibrium barrier $k^*$ obtained under aggregation? In other words, we seek a virtual discount rate $r_{virtual}$ such that the optimal barrier for a single agent with discount rate $r_{virtual}$ equals $k^*$. According to our derived barrier condition,  the corresponding virtual discount rate $r_{virtual}$ can be implicitly expressed by the relation
\[
k = \frac{\sigma^2}{ \sqrt{\mu^2 + 2\sigma^2 r}} \ln \left( \frac{ \sqrt{\mu^2 + 2\sigma^2 r}+\mu}{  \sqrt{\mu^2 + 2\sigma^2 r}-\mu} \right).
\]
This is the same as the optimal barrier condition for a time-consistent dividend problem without discount rate ambiguity, which was first derived in \cite{taksar2000optimal}.
Conversely, given the barrier $k^*$ obtained from our heterogeneous model, we solve the above relation to obtain a unique virtual discount rate $r_{virtual}$. This virtual rate provides an intuitive summary of the group's collective impatience, allowing us to compare different heterogeneous compositions and ambiguity attitudes on a common scale.
\begin{figure}[htbp]
  \centering
  \includegraphics[width=0.75\textwidth]{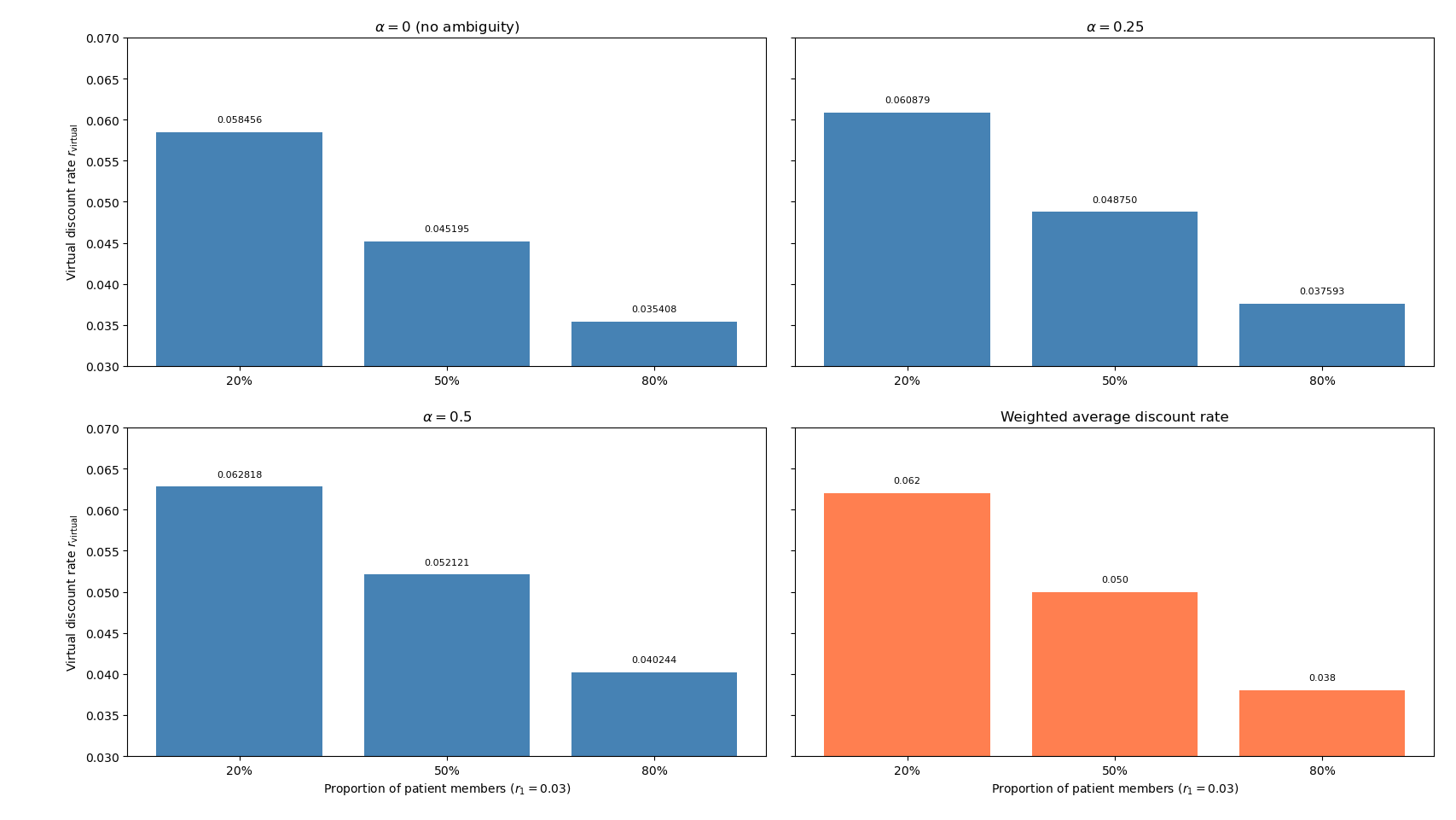}
\caption{ Virtual discount rates under different $\alpha$ and the weighted average discount rate.}
  \label{fig:virtual_vs_weighted}
\end{figure}

Figure \ref{fig:virtual_vs_weighted} compares the virtual discount rate $r_{virtual}$ under different levels of ambiguity aversion with the benchmark weighted-average discount rate $\tilde{r} = \omega r_1 + (1-\omega) r_2$. A clear pattern emerges: in the absence of ambiguity aversion ($\alpha = 0$), the virtual discount rate lies consistently below the weighted average. As $\alpha$ increases, the virtual rate rises, and for $\alpha = 0.5$ it exceeds the weighted average. This behavior can be understood by recognizing two opposing forces.

On the one hand, in the ambiguity-neutral case ($\alpha = 0$), the function $rh(k,r)$ turns out to be concave. By Jensen's inequality, $\mathbb{E}[r h(k,r)] \le \mathbb{E}[r]\, h(k,\mathbb{E}[r])$. The equilibrium condition $\mathbb{E}[r h(k,r)] = \mu$ then implies $\mu \le \tilde{r}\,h(k,\tilde{r})$. As the virtual discount rate $r_{virtual}$ satisfies $\mu = r_{virtual}\,h(k,r_{virtual})$ and $q(r)=rh(k,r)$ is increasing, it follows that $r_{virtual} \le \tilde{r}$. Hence, the virtual rate falls below the weighted average — the concavity effect makes the central planner appear more patient (i.e., inclined to delay dividends).

On the other hand, when ambiguity aversion is introduced, the exponential weight $e^{-\alpha y(k,r)}$ amplifies the contribution of small $y$ values, which correspond to high discount rates (impatient members). This increases the effective weight of impatient members in the equilibrium condition, pushing the virtual discount rate upward and making the collective decision more impatient.
Thus, the equilibrium strategy is shaped by two competing forces: diversity of discount rates makes it more patient, while ambiguity aversion makes it more impatient. When ambiguity aversion is weak ($\alpha = 0.25$), the diversity effect dominates, and the virtual rate remains below the weighted average. When ambiguity aversion is sufficiently strong ($\alpha = 0.5$), ambiguity aversion dominates the diversity effect, and the virtual rate exceeds the weighted average.

\vspace{0.2in}\noindent
\textbf{Acknowledgements}: The authors acknowledge the support from the National Natural Science Foundation of China (Grant Nos. 12271290 and  12371477).

\bibliographystyle{plainnat}
\bibliography{reference}
\appendix

\end{document}